%
%

\documentclass[matann2]{svjour}

\usepackage{amsmath}
\usepackage{amsfonts}
\usepackage{latexsym}
\usepackage{mathrsfs}

\usepackage{mathptmx} 

\usepackage{array}
\usepackage{amscd}
\usepackage{a4}
\usepackage[mathcal]{euscript}
\usepackage{amssymb}
\usepackage{pstricks}
\usepackage{amsrefs} 

%
%


 \ifx\MYUNDEFINED\BoxedEPSF
   \let\temp\relax
 \else
   \message{}
   \message{ !!! \string\BoxedEPSF or \string\BoxedArt %
         macros already defined !!!}
   \let\temp 
 \fi
  \temp
 
 \chardef\EPSFCatAt\the\catcode`\@
 \catcode`\@=11

 \chardef\C@tColon\the\catcode`\:
 \chardef\C@tSemicolon\the\catcode`\;
 \chardef\C@tQmark\the\catcode`\?
 \chardef\C@tEmark\the\catcode`\!
 \chardef\C@tDqt\the\catcode`\"

 \def\PunctOther@{\catcode`\:=12
   \catcode`\;=12 \catcode`\?=12 \catcode`\!=12 \catcode`\"=12}
 \PunctOther@

 \let\wlog@ld\wlog 
 \def\wlog#1{\relax} 

 \newdimen\XShift@ \newdimen\YShift@ 
 \newtoks\Realtoks
 
 %
 \newdimen\Wd@ \newdimen\Ht@
 \newdimen\Wd@@ \newdimen\Ht@@
 \newdimen\TT@
 \newdimen\LT@
 \newdimen\BT@
 \newdimen\RT@
 \newdimen\XSlide@ \newdimen\YSlide@ 
 \newdimen\TheScale  
 \newdimen\FigScale  
 \newdimen\ForcedDim@@

 \newtoks\EPSFDirectorytoks@
 \newtoks\EPSFNametoks@
 \newtoks\BdBoxtoks@
 \newtoks\LLXtoks@  
 \newtoks\LLYtoks@

 \newif\ifNotIn@
 \newif\ifForcedDim@
 \newif\ifForceOn@
 \newif\ifForcedHeight@
 \newif\ifPSOrigin

 \newread\EPSFile@ 
 
  \def\ms@g{\immediate\write16}

 \newif\ifIN@\def\IN@{\expandafter\INN@\expandafter}
  \long\def\INN@0#1@#2@{\long\def\NI@##1#1##2##3\ENDNI@
    {\ifx\m@rker##2\IN@false\else\IN@true\fi}%
     \expandafter\NI@#2@@#1\m@rker\ENDNI@}
  \def\m@rker{\m@@rker}

  \newtoks\Initialtoks@  \newtoks\Terminaltoks@
  \def\SPLIT@{\expandafter\SPLITT@\expandafter}
  \def\SPLITT@0#1@#2@{\def\TTILPS@##1#1##2@{%
     \Initialtoks@{##1}\Terminaltoks@{##2}}\expandafter\TTILPS@#2@}


  \newtoks\Trimtoks@

 \def\ForeTrim@{\expandafter\ForeTrim@@\expandafter}
 \def\ForePrim@0 #1@{\Trimtoks@{#1}}
 \def\ForeTrim@@0#1@{\IN@0\m@rker. @\m@rker.#1@%
     \ifIN@\ForePrim@0#1@%
     \else\Trimtoks@\expandafter{#1}\fi}

  \def\Trim@0#1@{%
      \ForeTrim@0#1@%
      \IN@0 @\the\Trimtoks@ @%
        \ifIN@ 
             \SPLIT@0 @\the\Trimtoks@ @\Trimtoks@\Initialtoks@
             \IN@0\the\Terminaltoks@ @ @%
                 \ifIN@
                 \else \Trimtoks@ {FigNameWithSpace}%
                 \fi
        \fi
      }


   \newtoks\pt@ks
   \def\getpt@ks 0.0#1@{\pt@ks{#1}}
   \dimen0=0pt\relax\expandafter\getpt@ks\the\dimen0@

  \newtoks\Realtoks
  \def\Real#1{%
    \dimen2=#1%
      \SPLIT@0\the\pt@ks @\the\dimen2@
       \Realtoks=\Initialtoks@
            }

   \newdimen\Product
   \def\Mult#1#2{%
     \dimen4=#1\relax
     \dimen6=#2%
     \Real{\dimen4}%
     \Product=\the\Realtoks\dimen6%
        }

 \newdimen\Inverse
 \newdimen\hmxdim@ \hmxdim@=8192pt
 \def\Invert#1{%
  \Inverse=\hmxdim@
  \dimen0=#1%
  \divide\Inverse \dimen0%
  \multiply\Inverse 8}

   \def\Rescale#1#2#3{
              \divide #1 by 100\relax
              \dimen2=#3\divide\dimen2 by 100 \Invert{\dimen2}%
              \Mult{#1}{#2}%
              \Mult\Product\Inverse 
              #1=\Product}

  \def\Scale#1{\dimen0=\TheScale %
      \divide #1 by  1280 
      \divide \dimen0 by 5120 %
      \multiply#1 by \dimen0 
      \divide#1 by 10   
     }
 

 \newbox\scrunchbox

 \def\Scrunched#1{{\setbox\scrunchbox\hbox{#1}%
   \wd\scrunchbox=0pt
   \ht\scrunchbox=0pt
   \dp\scrunchbox=0pt
   \box\scrunchbox}}

 \def\Shifted@#1{%
   \vbox {\kern-\YShift@
       \hbox {\kern\XShift@\hbox{#1}\kern-\XShift@}%
           \kern\YShift@}}


 \def\cBoxedEPSF#1{{\leavevmode 
   \ReadNameAndScale@{#1}%
   \SetEPSFSpec@
   \ReadEPSFile@ \ReadBdB@x  
     \TrimFigDims@ 
     \CalculateFigScale@  
     \ScaleFigDims@
     \SetInkShift@
   \hbox{$\mathsurround=0pt\relax
         \vcenter{\hbox{%
             \FrameSpider{\hskip-.4pt\vrule}%
             \vbox to \Ht@{\offinterlineskip\parindent=\z@%
                \FrameSpider{\vskip-.4pt\hrule}\vfil 
                \hbox to \Wd@{\hfil}%
                \vfil
                \InkShift@{\EPSFSpecial{\EPSFSpec@}{\FigSc@leReal}}%
             \FrameSpider{\hrule\vskip-.4pt}}%
         \FrameSpider{\vrule\hskip-.4pt}}}%
     $\relax}%
    \CleanRegisters@ 
    \ms@g{ *** Box composed for the %
         EPS file \the\EPSFNametoks@}%
    }}
 
 \def\tBoxedEPSF#1{\setbox4\hbox{\cBoxedEPSF{#1}}%
     \setbox4\hbox{\raise -\ht4 \hbox{\box4}}%
     \box4
      }

 \def\bBoxedEPSF#1{\setbox4\hbox{\cBoxedEPSF{#1}}%
     \setbox4\hbox{\raise \dp4 \hbox{\box4}}%
     \box4
      }

  \let\BoxedEPSF\cBoxedEPSF

   %
   \let\BoxedArt\BoxedEPSF

   %
  \def\gLinefigure[#1scaled#2]_#3{%
        \BoxedEPSF{#3 scaled #2}}
    
   %

  \def\EPSFxsize{\afterassignment\ForceW@\ForcedDim@@}
      \def\ForceW@{\ForcedDim@true\ForcedHeight@false}
  
  \def\EPSFysize{\afterassignment\ForceH@\ForcedDim@@}
      \def\ForceH@{\ForcedDim@true\ForcedHeight@true}

  \def\EmulateRokicki{%
       \let\epsfbox\bBoxedEPSF \let\epsffile\bBoxedEPSF
       \let\epsfxsize\EPSFxsize \let\epsfysize\EPSFysize} 
 
  %
 \def\ReadNameAndScale@#1{\IN@0 scaled@#1@
   \ifIN@\ReadNameAndScale@@0#1@%
   \else \ReadNameAndScale@@0#1 scaled\DefaultMilScale @%
   \fi}
  
 \def\ReadNameAndScale@@0#1scaled#2@{
    \let\OldBackslash@\\%
    \def\\{\OtherB@ckslash}%
    \edef\temp@{#1}%
    \Trim@0\temp@ @%
    \EPSFNametoks@\expandafter{\the\Trimtoks@ }%
    \FigScale=#2 pt%
    \let\\\OldBackslash@
    }
 
 \def\SetDefaultEPSFScale#1{%
      \global\def\DefaultMilScale{#1}}

 \SetDefaultEPSFScale{1000}

  %
 \def \SetBogusBbox@{%
     \global\BdBoxtoks@{ BoundingBox:0 0 100 100 }%
     \global\def\BdBoxLine@{ BoundingBox:0 0 100 100 }%
     \ms@g{ !!! Will use placeholder !!!}%
     }

 {\catcode`\%=12\gdef\P@S@{

 \def\ReadEPSFile@{
     \openin\EPSFile@\EPSFSpec@
     \relax  
  \ifeof\EPSFile@
     \ms@g{}%
     \ms@g{ !!! EPS FILE \the\EPSFDirectorytoks@
       \the\EPSFNametoks@\space WAS NOT FOUND !!!}%
     \SetBogusBbox@
  \else
   \begingroup
   \catcode`\%=12\catcode`\:=12\catcode`\!=12
   \catcode"00=14 \catcode"7F=14 \catcode`\\=14 
   \global\read\EPSFile@ to \BdBoxLine@ 
   \IN@0\P@S@ @\BdBoxLine@ @%
   \ifIN@ 
     \NotIn@true
     \loop   
       \ifeof\EPSFile@\NotIn@false 
         \ms@g{}%
         \ms@g{ !!! BoundingBox NOT FOUND IN %
            \the\EPSFDirectorytoks@\the\EPSFNametoks@\space!!! }%
         \SetBogusBbox@
       \else\global\read\EPSFile@ to \BdBoxLine@
       \fi
       \global\BdBoxtoks@\expandafter{\BdBoxLine@}%
       \IN@0BoundingBox:@\the\BdBoxtoks@ @%
       \ifIN@\NotIn@false\fi%
     \ifNotIn@
     \repeat
   \else
         \ms@g{}%
         \ms@g{ !!! \the\EPSFNametoks@\space is not PostScript.}%
         \ms@g{ !!! It should begin with the "\P@S@". }%
         \ms@g{ !!! Also, all other header lines until }%
         \ms@g{ !!!  "\pct@@ EndComments"  should begin with "\pct@@". }%
         \SetBogusBbox@
   \fi
  \endgroup\relax
  \fi
  \closein\EPSFile@ 
   }

  \def\ReadBdB@x{
   \expandafter\ReadBdB@x@\the\BdBoxtoks@ @}
  
  \def\ReadBdB@x@#1BoundingBox:#2@{
    \ForeTrim@0#2@%
    \IN@0atend@\the\Trimtoks@ @%
       \ifIN@\Trimtoks@={0 0 100 100 }%
         \ms@g{}%
         \ms@g{ !!! BoundingBox not found in %
         \the\EPSFDirectorytoks@\the\EPSFNametoks@\space !!!}%
         \ms@g{ !!! It must not be at end of EPSF !!!}%
         \ms@g{ !!! Will use placeholder !!!}%
       \fi
    \expandafter\ReadBdB@x@@\the\Trimtoks@ @%
   }
    
  \def\ReadBdB@x@@#1 #2 #3 #4@{
      \Wd@=#3bp\advance\Wd@ by -#1bp%
      \Ht@=#4bp\advance\Ht@ by-#2bp%
       \Wd@@=\Wd@ \Ht@@=\Ht@ 
       \LLXtoks@={#1}\LLYtoks@={#2}
      \ifPSOrigin\XShift@=-#1bp\YShift@=-#2bp\fi 
     }

   %
   \def\G@bbl@#1{}
   \bgroup
     \global\edef\OtherB@ckslash{\expandafter\G@bbl@\string\\}
   \egroup

  \def\SetEPSFDirectory{
           \bgroup\PunctOther@\relax
           \let\\\OtherB@ckslash
           \SetEPSFDirectory@}

 \def\SetEPSFDirectory@#1{
    \edef\temp@{#1}%
    \Trim@0\temp@ @
    \global\toks1\expandafter{\the\Trimtoks@ }\relax
    \egroup
    \EPSFDirectorytoks@=\toks1
    }

 \def\SetEPSFSpec@{%
     \bgroup
     \let\\=\OtherB@ckslash
     \global\edef\EPSFSpec@{%
        \the\EPSFDirectorytoks@\the\EPSFNametoks@}%
     \global\edef\EPSFSpec@{\EPSFSpec@}%
     \egroup}

  %
 \def\TrimTop#1{\advance\TT@ by #1}
 \def\TrimLeft#1{\advance\LT@ by #1}
 \def\TrimBottom#1{\advance\BT@ by #1}
 \def\TrimRight#1{\advance\RT@ by #1}

 \def\TrimBoundingBox#1{%
   \TrimTop{#1}%
   \TrimLeft{#1}%
   \TrimBottom{#1}%
   \TrimRight{#1}%
       }

 \def\TrimFigDims@{%
    \advance\Wd@ by -\LT@ 
    \advance\Wd@ by -\RT@ \RT@=\z@
    \advance\Ht@ by -\TT@ \TT@=\z@
    \advance\Ht@ by -\BT@ 
    }

  %
  \def\ForceWidth#1{\ForcedDim@true
       \ForcedDim@@#1\ForcedHeight@false}
  
  \def\ForceHeight#1{\ForcedDim@true
       \ForcedDim@@=#1\ForcedHeight@true}

  \def\ForceOn{\ForceOn@true}
  \def\ForceOff{\ForceOn@false\ForcedDim@false}
  
  \def\CalculateFigScale@{%
     \ifForcedDim@\FigScale=1000pt
           \ifForcedHeight@
                \Rescale\FigScale\ForcedDim@@\Ht@
           \else
                \Rescale\FigScale\ForcedDim@@\Wd@
           \fi
     \fi
     \Real{\FigScale}%
     \edef\FigSc@leReal{\the\Realtoks}%
     }
   
  \def\ScaleFigDims@{\TheScale=\FigScale
      \ifForcedDim@
           \ifForcedHeight@ \Ht@=\ForcedDim@@  \Scale\Wd@
           \else \Wd@=\ForcedDim@@ \Scale\Ht@
           \fi
      \else \Scale\Wd@\Scale\Ht@        
      \fi
      \ifForceOn@\relax\else\global\ForcedDim@false\fi
      \Scale\LT@\Scale\BT@  
      \Scale\XShift@\Scale\YShift@
      }
      
 \def\HideDisplacementBoxes{\global\def\FrameSpider##1{\null}}
 \def\ShowDisplacementBoxes{\global\def\FrameSpider##1{##1}}
 \let\HideFigureFrames\HideDisplacementBoxes 
 \let\ShowFigureFrames\ShowDisplacementBoxes
 \ShowDisplacementBoxes
 
 \def\hSlide#1{\advance\XSlide@ by #1}
 \def\vSlide#1{\advance\YSlide@ by #1}
 
  \def\SetInkShift@{%
            \advance\XShift@ by -\LT@
            \advance\XShift@ by \XSlide@
            \advance\YShift@ by -\BT@
            \advance\YShift@ by -\YSlide@
             }
  \def\InkShift@#1{\Shifted@{\Scrunched{#1}}}
 
   %
  \def\CleanRegisters@{%
      \globaldefs=1\relax
        \XShift@=\z@\YShift@=\z@\XSlide@=\z@\YSlide@=\z@
        \TT@=\z@\LT@=\z@\BT@=\z@\RT@=\z@
      \globaldefs=0\relax}

 
 \def\SetTexturesEPSFSpecial{\PSOriginfalse
  \gdef\EPSFSpecial##1##2{\relax
    \edef\specialtemp{##2}%
    \SPLIT@0.@\specialtemp.@\relax
    \special{illustration ##1 scaled
                        \the\Initialtoks@}}}
 
  \def\SetUnixCoopEPSFSpecial{\PSOrigintrue 
   \gdef\EPSFSpecial##1##2{%
      \dimen4=##2pt
      \divide\dimen4 by 1000\relax
      \Real{\dimen4}
      \edef\Aux@{\the\Realtoks}%
      \includegraphics{##1\space}}}

  \def\SetBechtolsheimEPSFSpecial@{
   \PSOrigintrue
   \special{\DriverTag@ Include0 "psfig.pro"}%
   \gdef\EPSFSpecial##1##2{%
      \dimen4=##2pt 
      \divide\dimen4 by 1000\relax
      \Real{\dimen4} 
      \edef\Aux@{\the\Realtoks}
      \special{\DriverTag@ Literal "10 10 0 0 10 10 startTexFig
           \the\mag\space 1000 div 
           dup 3.25 neg mul 1 index .25 neg mul translate 
           \Aux@\space mul dup scale "}%
      \special{\DriverTag@ Include1 "##1"}%
      \special{\DriverTag@ Literal "endTexFig "}%
        }}

  \def\SetBechtolsheimDVITPSEPSFSpecial{\def\DriverTag@{dvitps: }%
      \SetBechtolsheimEPSFSpecial@}

  \def\SetBechtolsheimDVI2PSEPSFSSpecial{\def\DriverTag@{DVI2PS: }%
      \SetBechtolsheimEPSFSpecial@}

  \def\SetLisEPSFSpecial{\PSOrigintrue 
   \gdef\EPSFSpecial##1##2{%
      \dimen4=##2pt
      \divide\dimen4 by 1000\relax
      \Real{\dimen4}
      \edef\Aux@{\the\Realtoks}%
      \special{pstext="10 10 0 0 10 10 startTexFig\space
           \the\mag\space 1000 div \Aux@\space mul 
           \the\mag\space 1000 div \Aux@\space mul scale"}%
      \includegraphics{##1}%
      \special{pstext=endTexFig}%
        }}

  \def\SetRokickiEPSFSpecial{\PSOrigintrue 
   \gdef\EPSFSpecial##1##2{%
      \dimen4=##2pt
      \divide\dimen4 by 10\relax
      \Real{\dimen4}
      \edef\Aux@{\the\Realtoks}%
      \includegraphics{##1}}}

  \def\SetInlineRokickiEPSFSpecial{\PSOrigintrue 
   \gdef\EPSFSpecial##1##2{%
      \dimen4=##2pt
      \divide\dimen4 by 1000\relax
      \Real{\dimen4}
      \edef\Aux@{\the\Realtoks}%
      \special{ps::[begin] 10 10 0 0 10 10 startTexFig\space
           \the\mag\space 1000 div \Aux@\space mul 
           \the\mag\space 1000 div \Aux@\space mul scale}%
      \special{ps: plotfile ##1}%
      \special{ps::[end] endTexFig}%
        }}

 \def\SetOzTeXEPSFSpecial{\PSOrigintrue
 \gdef\EPSFSpecial##1##2{%
 \dimen4=##2pt
 \divide\dimen4 by 1000\relax
 \Real{\dimen4}
 \edef\Aux@{\the\Realtoks}
 \special{epsf=\string"##1\string"\space scale=\Aux@}%
 }} 

  \def\SetPSprintEPSFSpecial{\PSOriginFALSE 
   \gdef\EPSFSpecial##1##2{
     \special{##1\space 
       ##2 1000 div \the\mag\space 1000 div mul
       ##2 1000 div \the\mag\space 1000 div mul scale
       \the\LLXtoks@\space neg \the\LLYtoks@\space neg translate
       }}}

 \def\SetArborEPSFSpecial{\PSOriginfalse 
   \gdef\EPSFSpecial##1##2{%
     \edef\specialthis{##2}%
     \SPLIT@0.@\specialthis.@\relax 
     \special{ps: epsfile ##1\space \the\Initialtoks@}}}

 \def\SetClarkEPSFSpecial{\PSOriginfalse 
   \gdef\EPSFSpecial##1##2{%
     \Rescale {\Wd@@}{##2pt}{1000pt}%
     \Rescale {\Ht@@}{##2pt}{1000pt}%
     \special{dvitops: import 
           ##1\space\the\Wd@@\space\the\Ht@@}}}

  \let\SetDVIPSONEEPSFSpecial\SetUnixCoopEPSFSpecial
  \let\SetDVIPSoneEPSFSpecial\SetUnixCoopEPSFSpecial

  \def\SetBeebeEPSFSpecial{
   \PSOriginfalse%
   \gdef\EPSFSpecial##1##2{\relax
    \special{language "PS",
      literal "##2 1000 div ##2 1000 div scale",
      position = "bottom left",
      include "##1"}}}
  \let\SetDVIALWEPSFSpecial\SetBeebeEPSFSpecial

  \def\SetNorthlakeEPSFSpecial{\PSOrigintrue
   \gdef\EPSFSpecial##1##2{%
     \edef\specialthis{##2}%
     \SPLIT@0.@\specialthis.@\relax 
     \special{insert ##1,magnification=\the\Initialtoks@}}}

 \def\SetStandardEPSFSpecial{%
   \gdef\EPSFSpecial##1##2{%
     \ms@g{}
     \ms@g{%
       !!! Sorry! There is still no standard for \string%
       \special\space EPSF integration !!!}%
     \ms@g{%
      --- So you will have to identify your driver using a command}%
     \ms@g{%
      --- of the form \string\Set...EPSFSpecial, in order to get}%
     \ms@g{%
      --- your graphics to print.  See BoxedEPS.doc.}%
     \ms@g{}
     \gdef\EPSFSpecial####1####2{}
     }}

  \SetStandardEPSFSpecial 
 
 \let\wlog\wlog@ld 

 \catcode`\:=\C@tColon
 \catcode`\;=\C@tSemicolon
 \catcode`\?=\C@tQmark
 \catcode`\!=\C@tEmark
 \catcode`\"=\C@tDqt

 \catcode`\@=\EPSFCatAt

 %
 %
 %
 %
 %

\SetEPSFDirectory{} 
\HideDisplacementBoxes
\SetRokickiEPSFSpecial  
%

%
%
\DeclareMathAlphabet{\ams}{U}{msb}{m}{n}
\DeclareMathAlphabet{\goth}{U}{euf}{m}{n}

\def\so{\mathbf{SO}}
\def\pso{\mathbf{PSO}}
\def\sl{\mathbf{SL}}
\def\psl{\mathbf{PSL}}
\def\pgl{\mathbf{PGL}}
\def\gl{\mathbf{GL}}
\def\ml{\mathbf{ML}}
\def\m{\mathbf{M}}
\def\mso{\mathbf{MSO}}
\def\msp{\mathbf{MSp}}
\def\d{\mathbf{D}}
\def\t{\mathbf{T}}
\def\su{\mathbf{SU}}
\def\sp{\mathbf{Sp}}
\def\f{\mathbf{F}}
\def\pu{\mathbf{PU}}

\def\gal{\text{Gal}}
\def\homeo{\text{Homeo}}
\def\id{\text{id}}
\def\rr{\cal{R}}
\def\rk{\text{rk}}
\def\diag{\text{diag}}
\def\coker{\text{coker}\,}
\def\im{\text{im}\,}
\def\aut{\text{Aut}}
\def\sym{\text{Sym}}
\def\ob{\text{{\sf Ob}}}
\def\ov{\overline}
\def\ul{\underline}
\def\tl{\tilde}
\def\wtl{\widetilde}
\def\wh{\widehat}
\def\supp{\text{supp}\,}
\def\rank{\text{rank}\,}
\def\dom{\text{dom}}
\def\codim{\text{codim}\,}
\def\opp{\text{opp}}

\def\CC{\mathscr C}
\def\JJ{\mathscr J}
\def\EE{\mathscr E}
\def\NN{\mathscr N}
\def\FF{\mathscr F}
\def\BB{\mathscr B}
\def\AA{\mathscr A}
\def\cA{\mathscr A}

\def\II{\mathscr I}
\def\OO{\mathcal O}
\def\HH{\mathcal H}
\def\cH{\mathcal H}
\def\RR{\mathcal R}
\def\LL{\mathcal L}
\def\PP{\mathcal P}
\def\QQ{\mathcal Q}
\def\TT{\mathcal T}
\def\DD{\mathcal D}
\def\cS{\mathcal S}

\def\SS{\goth{S}}
\def\BBB{\goth{B}}
\def\XXX{\goth{X}}

\def\ve{\varepsilon}

\def\aa{\alpha}
\def\ww{\omega}
\def\bb{\beta}
\def\ss{\sigma}
\def\vphi{\varphi}
\def\ll{\lambda}
\def\ve{\varepsilon}
\def\Om{\Omega}

\def\wh{\widehat}

\def\Z{\ams{Z}}\def\E{\ams{E}}
\def\H{\ams{H}}\def\R{\ams{R}}
\def\C{\ams{C}}\def\Q{\ams{Q}}
\def\F{\ams{F}}\def\K{\ams{K}}
\def\P{\ams{P}}
\def\O{\ams{O}}
\def\G{\ams{G}}
\def\M{\ams{M}}

\def\e{\mathbf{e}}
\def\v{\mathbf{v}}
\def\u{\mathbf{u}}
\def\x{\mathbf{x}}
\def\0{\mathbf{0}}
\def\1{\mathbf{1}}

\def\quo{/\kern -.45em\sim}

\newpsobject{showgrid}{psgrid}{subgriddiv=1,griddots=10,gridlabels=6pt,gridcolor=red}

\def\ds{\displaystyle}

\def\blob{\bullet}

\def\Langle{\langle\kern -2pt\langle}
\def\Rangle{\rangle\kern -1.9pt\rangle}

\def\lf{\lfloor}
\def\rf{\rfloor}
%

%
%
\newcommand{\marginlabel}[1]
{\mbox{}\marginpar{\raggedleft\hspace{0pt}#1}}
\newcommand{\sidecomment}[1]{\marginlabel{\small{{\color{red}$\blob$ {#1}}}}}
%

%
%
\spnewtheorem*{remark*}{Remark}{\it}{\rm}
\spnewtheorem*{remarks*}{Remarks}{\it}{\rm}
%

%
%
\addtolength{\textwidth}{2cm}
\addtolength{\oddsidemargin}{-1cm}
\addtolength{\evensidemargin}{-1cm}
\addtolength{\topmargin}{-1cm}
%

\title{Partial mirror symmetry, lattice presentations and algebraic monoids}

\author{Brent Everitt and
John Fountain
\thanks{Some of the results of this paper were 
obtained while the first author was visiting
the Institute for Geometry and its Applications, 
University of Adelaide, Australia. 
He is grateful for their hospitality. He also apologises to Tony
Armstrong for stealing the quote of the first sentence. A grant from the Royal Society made
it possible for the second author to visit the University of Adelaide to continue the
work reported here. 
Both authors benefited from the
hospitality of Stuart Margolis and Bar-Ilan University, Israel, for
which they are very grateful.}
}

\institute{
Department of Mathematics, University of York, Heslington, York
YO10 5DD, United Kingdom. Brent Everitt \email{brent.everitt@york.ac.uk} 
John Fountian \email{john.fountain@york.ac.uk}.
}


\titlerunning{}
\authorrunning{Brent Everitt and John Fountain}

\begin{document}

\maketitle


\begin{abstract}
This is the second in a series of papers that develops the theory of
reflection monoids, motivated by the theory of reflection groups. 
Reflection monoids were first introduced in \cite{Everitt-Fountain10}.
In this paper we
study their presentations as abstract monoids. Along the way we also find general 
presentations for certain join-semilattices (as monoids under join) which 
we interpret for two special classes of examples: the face lattices of convex polytopes and
the geometric lattices, particularly the intersection lattices of hyperplane arrangements.
Another spin-off is a general presentation for the Renner monoid of an algebraic monoid,
which we illustrate in the special case of the ``classical'' algebraic monoids.
\keywords{Coxeter group, \and reflection monoid, \and lattices, \and convex polytopes, \and hyperplane 
arrangements, algebraic monoids, \and classical monoids, \and Renner monoids}
\end{abstract}


\section*{Introduction}

``Numbers measure size, groups measure symmetry'',
and inverse monoids
measure partial symmetry. In \cite{Everitt-Fountain10} we initiated
the formal study of partial \emph{mirror\/} symmetry via the theory of
what we call reflection monoids. The aim is three-fold: (i). to wrap
up a reflection group and a naturally associated combinatorial object
into a single algebraic entity
having nice properties, (ii). to unify various unrelated (until now) parts of the theory
of inverse monoids under one umbrella, and (iii). to provide
workers interested in partial symmetry with the appropriate tools to study the 
phenomenon systematically. 

This paper continues the programme by studying presentations for reflection 
monoids. As one of the distinguishing features of real reflection, or Coxeter groups,
are their presentations, this is an entirely natural thing
to do. Broadly, our 
approach is to adapt the presentation found in \cite{Easdown05} to our purposes. 

Roughly speaking, an inverse monoid (of the type considered in this
paper) 
is made up out of a group $W$ (the \emph{units\/}),
a poset $E$ with joins $\vee $
(the \emph{idempotents\/}) and an action of $W$ on $E$. A presentation for an 
inverse monoid thus has relations pertaining to each of these three components. In particular, we need 
presentations for $W$ as a group and $E$ as a monoid under $\vee$.

For a reflection monoid, $W$ is a reflection group. If it is a real reflection 
group, as all in this paper turn out to be,  then it has a Coxeter
presentation; so that part
is already nicely taken care of. 

The poset $E$ is a commutative monoid of idempotents, and we invest a certain amount of effort
in finding presentations for these (\S\ref{section:idempotents}). We imagine that much of this material
is of independent interest. Here we are motivated by the notion of independence in 
a geometric lattice (see for instance \cite{Orlik80}), which we first generalize to the setting of graded 
atomic $\vee$-semilattices. The idea is that relations arise when we have dependent
sets of atoms. Our first examples are the face monoids of convex polytopes,
and it turns out that simple polytopes have particularly simple presentations. The 
pay-off comes in \S\ref{section:renner}, where these face monoids are the idempotents
in the Renner monoid of a linear algebraic monoid (Renner monoids being to algebraic
monoids as Weyl groups are to algebraic groups). We then specialize to geometric 
lattices--their presentations turn out to be nicer (Theorem \ref{section:idempotents:result350}).
We finally come full circle with the intersection lattices of the reflecting hyperplanes of 
a finite Coxeter group (\S\ref{section:idempotents:coxeter}), where we work though the details
for the classical Weyl groups. These reappear in \S\ref{section:popova:arrangement} as the
idempotents of the Coxeter arrangement monoids.

Historically, presentations for reflection monoids start with Popova's presentation for
the symmetric inverse monoid $\II_n$ \cite{Popova61}. Just as the symmetric group
$\SS_n$ is one of the simplest examples of a reflection group (being the Weyl group of type
$A_{n-1}$) so $\II_n$ is one of the simplest examples of a reflection monoid (the 
Boolean monoid of type $A_{n-1}$; see \cite{Everitt-Fountain10}*{\S 5}). 
Our general
presentation for a reflection monoid
(Theorem \ref{presentations:result1000} of
\S\ref{section:presentation}) 
specializes to Popova's in this special case,
unlike those found in \cites{Godelle10,Solomon02}.
In the resulting presentation there is one relation that seems less obvious than the 
others. This turns out to always be true. The units in a reflection
monoid form a
reflection group $W$ and each relation in this non-obvious family arises from an 
orbit of the $W$-action on the reflecting hyperplanes of $W$. So, the interaction 
between a reflection group and a naturally associated combinatorial object
(in this case the intersection lattice of the reflecting hyperplanes of the group) manifests
itself in the presentation for the resulting reflection monoid. 

Sections \ref{section:popova:boolean} and 
\ref{section:popova:arrangement} work out explicit presentations for
the two main families of reflection monoids
that were introduced in \cite{Everitt-Fountain10}: the Boolean monoids and the Coxeter arrangement monoids.

Finally, we get presentations for the Renner monoids of algebraic
monoids at no extra cost (\S\ref{section:renner}). 
It turns out that the Renner monoids are not
reflection monoids in general (see, e.g.: \cite{Everitt-Fountain10}*{Theorem 8.1})
but more general examples of monoids of partial isomorphisms with unit groups that are
nevertheless reflection groups. In any case, our presentation works
with only minor modifications. 
The result involves fewer generators and relations than that found in \cite{Godelle10}.
The basic principle here is to build an abstract monoid of partial isomorphisms from 
a reflection group acting on a combinatorial description of a rational polytope. This 
abstract monoid is then isomorphic to the Renner monoid of an algebraic monoid--the
reflection group corresponds to the Weyl group of the underlying algebraic group and the
polytope arises from the weights of a representation of the Weyl group
(a reflection group and naturally associated combinatorial structure
being wrapped up!). We work the details 
for the ``classical'' algebraic monoids (special linear, orthogonal, symplectic) as well as 
another nice family of examples introduced by Solomon in \cite{Solomon95}.

\section{Reflection monoids}
\label{section:reflection.monoids}

We start with a brief summary of the reflection group fundamentals that
we will need. The standard references are
\cites{Bourbaki02,Humphreys90,Kane01}.
We then recall the monoids of partial symmetries 
introduced in \cite{Everitt-Fountain10}. 

Let $V$ be a finite dimensional vector space over a field $k$. A
reflection is a diagonalizable linear map $s:V\rightarrow V$
having $\dim V-1$ eigenvalues equal to $1$
and a single eigenvalue $\zeta\not=1$ a root of unity. 
Thus, there is a hyperplane $H_s$ fixed pointwise by $s$, with $s$
acting as multiplication by $\zeta$ on a complementary $k$-line.
A reflection group $W\subset GL(V)$ is a group generated by finitely many 
reflections. 

In this paper we specialize\footnote{For concreteness, as in \cite{Everitt-Fountain10}.}
to the case $k=\R$, where there is a distinguished set $S$ of
generating reflections with $(W,S)$ having the structure of a Coxeter group. 
This structure is encoded (and determined by) the Coxeter symbol: 
it has nodes corresponding to the $s\in S$ with the nodes $s$ and $t$
joined by an edge labelled $m_{st}\in\Z^{> 0}\cup\{\infty\}$ iff
$st$ has order $m_{st}$ in $W$. In practice the label is left on the
symbol only if
$m_{st}\geq 4$; the edge is left unlabelled if $m_{st}=3$;
there is no edge if $m_{st}=2$; and $m_{st}=1$ when $s=t$.

The full set $T$ of reflections
in $W$ is the set of $W$-conjugates of $S$. Write $\AA=\{H_t\subset V\,|\,t\in T\}$ for the set of reflecting
hyperplanes of $W$. Then $W$ naturally acts on $\AA$ and every orbit contains an $H_s$ with $s\in S$.
Moreover, if $s,s'\in S$ then $H_s$ and $H_{s'}$ lie in the same orbit if and only if $s$ and $s'$ are
joined in the Coxeter symbol by a path of edges labeled entirely by \emph{odd\/} $m_{st}$. Thus the number
of orbits of $W$ on $\AA$ is the number of connected components of the
Coxeter symbol once the 
even labeled edges have been dropped. This number will appear later on
as the number of relations
in a certain family in the presentation for a reflection monoid.

All the examples considered in this paper will be further restricted in being finite,
hence of the form $W(\Phi)=\langle
s_v\,|\,v\in\Phi\rangle$, where $\Phi\subset V$ is a finite root system and
$s_v$ the reflection in the hyperplane orthogonal to the root $v$.
Thus $\AA=\{v^\perp\,|\,v\in\Phi\}$ where $^\perp$ is with respect to
a $W$-invariant inner product on $V$.
%
The finite real reflection groups are, up to 
isomorphism, direct products of $W(\Phi)$ for $\Phi$ from a well known list of irreducible
root systems. These $\Phi$ fall into five infinite
families of types $A_{n-1},B_n,C_n,D_n$ (the classical systems) and $I_2(m)$,
and six exceptional cases of types $H_3, H_4, F_4, E_6,E_7$ and $E_8$.
Notable among these are the $\Phi$ 
where $W(\Phi)$ is a finite crystallographic reflection or
\emph{Weyl group\/}: the $W(\Phi)\subset GL(V)$ that leave invariant some
free $\Z$-module $A\subset V$. The irreducible crystallographic groups are those of types $A, B, C, D, E$ and $F$,
together with $I_2(6)$, which in the context of crystallographic groups is often renamed $G_2$.

Table \ref{table:roots1} gives $\Phi$ for the classical Weyl
groups with $\{v_1,\ldots,$ $v_n\}$ an orthonormal basis for $V$.
The root systems of types $B$ and $C$ have the same symmetry, 
but different lengths of roots: type $C$ has roots $\pm 2v_i$ rather
than the $\pm v_i$. 
We have labeled the nodes of the Coxeter symbol with a simple
system. 
If $W=W(\Phi)$ is an irreducible Weyl group then the natural $W$-action on these $\Phi$
has orbits consisting of those roots of a given length. Thus in the classical cases there are 
two orbits in types $B$ and $C$ and a single orbit in types $A$ and $D$. This is just a special
case of the fact stated above for a general Coxeter group.

\begin{table}
\begin{tabular}{l}
\hline
\vrule width 5 mm height 0 mm depth 0 mm
Type 
\vrule width 25 mm height 0 mm depth 0 mm
Root system $\Phi$ 
\vrule width 30 mm height 0 mm depth 0 mm
Coxeter symbol and simple system\\\hline
\begin{pspicture}(0,0)(14,1.5)
\rput(1,.75){$A_{n-1}\,(n\geq 2)$}
\rput(4.5,.75){$\{v_i-v_j \,\,(1\leq i\not= j\leq n)\}$}
\rput(8,0){
\rput(2.5,0.75){\BoxedEPSF{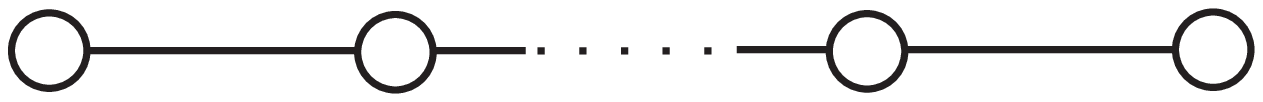 scaled 400}}
\rput(.1,.45){{$v_2-v_{1}$}}\rput(1.55,1.1){{$v_{3}-v_{2}$}}
\rput(3.45,.45){{$v_{n-1}-v_{n-2}$}}\rput(4.85,1.1){{$v_{n}-v_{n-1}$}}
}
\end{pspicture}
\\
\begin{pspicture}(0,0)(14,2.5)
\rput(1,1.25){$D_n\,(n\geq 4)$}
\rput(4.5,1.25){$\{\pm v_i\pm v_j\,\,(1\leq i<j\leq n)\}$}
\rput(8.1,0){
\rput(2.5,1.25){\BoxedEPSF{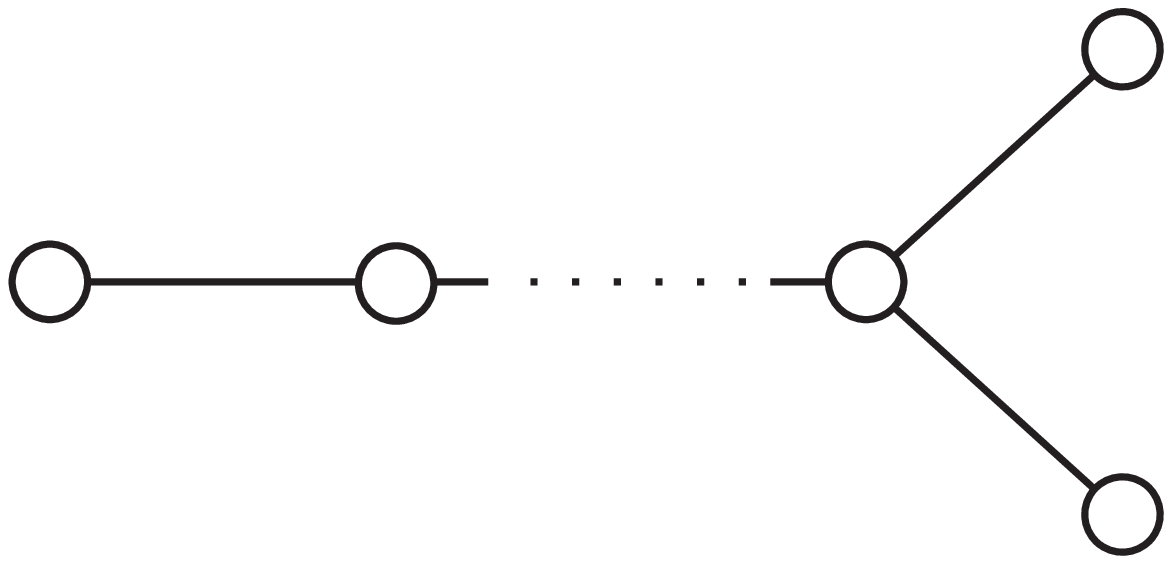 scaled 400}}
\rput(-.5,0){\rput(.8,0.95){{$v_{n}-v_{n-1}$}}
\rput(2.2,1.6){{$v_{n-1}-v_{n-2}$}}
\rput(5,1.25){{$v_3-v_2$}}
\rput(5.2,2.5){{$v_2-v_1$}}
\rput(5.2,0){{$v_1+v_2$}}}
}
\end{pspicture}
\\
\begin{pspicture}(0,0)(14,1.5)
\rput(1,.75){$B_n\,(n\geq 2)$}
\rput(4.5,.75){\begin{tabular}{l}
$\{\pm v_i\,\,(1\leq i\leq n)$, \\
$\pm v_i\pm v_j\,\,(1\leq i<j\leq n)\}$\\
\end{tabular}}
\rput(8.1,0){
\rput(2.5,0.75){\BoxedEPSF{fig2.eps scaled 400}}
\rput(.2,.45){{$v_{n}-v_{n-1}$}}\rput(1.55,1.1){{$v_{n-1}-v_{n-2}$}}
\rput(3.45,.45){{$v_{2}-v_{1}$}}\rput(4.85,1.1){{$v_1$}}
\rput(4.2,1){$4$}
}
\end{pspicture}
\\\hline
\end{tabular}
\caption{
Root systems, simple systems and Coxeter symbols for the classical
Weyl groups. 
}
\label{table:roots1}
\end{table}

As in \cite{Everitt-Fountain10}, when $G\subseteq GL(V)$ is any group
and $X\subseteq V$, a key role
is played by the isotropy groups $G_X=\{g\in G\,|\,vg=v\text{ for all }v\in X\}$.
A theorem of Steinberg \cite[Theorem 1.5]{Steinberg64}
asserts that for $G=W(\Phi)$, the
isotropy group $W(\Phi)_X$ is itself a reflection group; indeed,
generated by the reflections $s_v$ for $v\in\Phi\cap X^\perp$. 

So much for mirror symmetry; now to \emph{partial\/} mirror symmetry,
where we recall the definitions of \cite{Everitt-Fountain10}*{\S 2}.
If $G\subseteq GL(V)$ is a group, then 
a collection $\cS$ of subspaces of $V$ is called a system in $V$ for $G$
if and only if $V\in\cS$, $\cS G=\cS$, and $X\cap Y\in\cS$ for $X,Y\in\cS$. 
A partial linear isomorphism of $V$ is a vector space isomorphism 
$X\rightarrow Y$, for subspaces $X,Y$ of $V$
(including the zero map $\mathbf{0}\rightarrow\mathbf{0}$ from the zero
subspace to itself). Any such can be obtained by restricting to $X$ a full 
isomorphism $g\in GL(V)$. We write $g_X^{}$ for the partial isomorphism
with domain $X$ and effect that of restricting $g$ to $X$.
In this form, two partial linear isomorphisms are composed
as $g_X^{}h_Y^{} = (gh)_{Z}^{}$ with $Z=X \cap\kern1pt Yg^{-1}$.
If $G\subseteq GL(V)$ and $\cS$ is a system for $G$ then the resulting 
monoid of partial isomorphisms is 
$$
M(G,\cS):=\{g_X^{}\,|\, g\in G, X\in\cS\}.
$$
If $G=W$ is a reflection group then $M(W,\cS)$  is called a \emph{reflection monoid\/}.
The monoid $M(G,\cS)$ has units the group $G$ and idempotents the 
partial identities: the $\ve_X^{}:X\rightarrow X$ with $\ve$ the
identity on $V$ and $X\in\cS$. 

The previous paragraph can be mimicked to give partial permutations instead of partial linear
isomorphisms: replace $V$ by a finite set $E$; the group $G$ is now $G\subseteq\SS_E$ and $\cS$ is
a system of subsets of $E$ that contains $E$ itself, and is closed under $\cap$ and the
$G$-action. The resulting $M(G,\cS)$ is a monoid of partial
permutations of $E$. In all the examples in this paper $E$ will turn out to have
more structure: it will be a $\vee$-semilattice with a unique minimal
element $\0$ and with the $G$-action by poset isomorphisms. 
The system of subsets $\cS$ consists of intervals in
$E$, namely, for any $a\in E$ the sets $E_{\geq a}:=\{b\in E\,|\,b\geq a\}$.
Then $E=E_{\geq\0}$, $E_{\geq a}\cdot g=E_{\geq a\cdot g}$ for $g\in G$, and 
$E_{\geq a}\cap E_{\geq b}=E_{\geq a\vee b}$. Ordering $\cS$ by \emph{reverse\/}
inclusion, the map $E\rightarrow\cS$ given by $a\mapsto E_{\geq a}$ is a poset isomorphism
that is equivariant with respect to the $G$-actions on $E$ and $\cS$.

All the monoids considered above are inverse monoids: for any $a\in M$
there is a unique $b\in M$ with $aba = a$ and $bab = b$. Moreover, any $g_X^{}$ can
be written as $g_X^{}=\ve_X^{} g$, a product of an idempotent and a unit. Thus
the monoids above are also factorizable: 
$M=EG$ with $E$ the idempotents and $G$ the units. Indeed it is not hard to show
\cite{Everitt-Fountain10}*{Proposition 9.1}
that the monoids of partial permutations 
$M(G,\cS)$ are \emph{precisely\/} the finite factorizable inverse monoids,
and the reflection monoids are the factorizable inverse monoids generated
by partial reflections (i.e.: the $s_X^{}$ with $s$ a reflection).
In this setting the role of the isotropy group is played by the idempotent stabilizer 
$G_e=\{g\in G\,|\,eg=e\}$. 

\section{Idempotents}
\label{section:idempotents}


A poset with joins and a unique minimal element is a monoid. Finding presentations for
such monoids is the subject of this rather long section.  

\subsection{Generalities}
\label{section:idempotents:generalities}

Let $E$ be a finite commutative monoid of idempotents. It is a fundamental result
that $E$ acquires, via the ordering $x\leq y$ if and only if $xy=y$, 
the structure of a join semi-lattice with a unique minimal element.
Conversely, any join semi-lattice with unique minimal element 
is a commutative monoid of idempotents via $xy:=x\vee y$. 
Moreover, in either case we also have a unique maximal element--the
join of all the elements of (finite) $E$.
From now on we will apply monoid and poset
terminology (see \cite{Stanley97}*{Chapter 3})
interchangeably to $E$ and write $\0$ for the unique minimal element
and $\1$ for the unique maximal one. The reader should beware: the
$\0$ of the poset $E$ is the multiplicative $1$ of the monoid $E$ and
the $\1$ is the multiplicative $0$.
Recall that a poset map $f:E\rightarrow E'$ is a map with $fx\leq' fy$
when $x\leq y$.

All of our examples will turn out to have slightly more structure: 
$E$ is \emph{graded\/} if for every $x\in E$, any two saturated chains
$\0=x_0<x_1\cdots<x_k=x$ have the same length. In this case $E$ has a rank function
$\rk:E\rightarrow\Z^{\geq 0}$ with $\rk(x)=k$. In particular
$\rk(\0)=0$, and if $x$ and $y$ are such that $x\leq z\leq y$ implies $z=x$ or $z=y$,
then $\rk(y)=\rk(x)+1$. 
Write $\rk E:=\rk(\1)$. 
The elements of rank $1$ are called the \emph{atoms\/}, and
$E$ is said to be \emph{atomic\/} if every element is a join of atoms. In particular, an atomic $E$ is
generated as a monoid by its atoms.

For example,  the \emph{Boolean lattice $\BB_X$ of rank $n$\/} is the lattice 
of subsets of
$X=\{1,\ldots,n\}$ ordered by \emph{reverse\/} inclusion. It is graded
with $\rk(Y)=|X\setminus Y|$
and atomic, with atoms the $a_i:=\{1,\ldots,\wh{i},\ldots,n\}$. The monoid operation is just 
intersection.



Writing $\bigvee S$ for the join of the elements in a subset $S\subseteq E$, call a set $S$ of atoms
\emph{independent\/} if $\bigvee S\setminus\{s\}<\bigvee S$ for all $s\in S$, and 
\emph{dependent\/} otherwise; $S$ is \emph{minimally dependent\/} if it is dependent and
every proper subset is independent.
These notions satisfy the following properties, most of which are
clear, although some hints are given:
\begin{description}
\item[(I1).] If $|S|\leq 2$ then $S$ is independent (any two atoms are
  incomparable); in particular, any three element set of dependent atoms is minimally
dependent.
\item[(I2).] If $S$ is dependent then there exists $T\subset S$ with $T$ independent and $\bigvee T=\bigvee S$
(successively remove those $s$ for which $\bigvee S\setminus\{s\}=\bigvee S$).
\item[(I3).] If $T$ is dependent and $T\subseteq S$, then $S$ is dependent. Thus, any subset of an independent
set is independent.
\item[(I4).] If $T$ is independent and $S=T\cup \{b\}$ is dependent then
there is a  $T'\subseteq T$ with $T'\cup \{b\}$ minimally dependent
(this is clear if $|S|=3$; if $S$ arbitrary is not minimally dependent
already then 
there is an $s\in S$ with $S\setminus\{s\}$ dependent, and in particular $s\not=b$. 
The result then follows by induction applied to $S\setminus\{s\}$.)
\item[(I5).] If $S$ is independent then there is an injective map of
  posets $\BB_S\hookrightarrow E$, not necessarily grading preserving
(send $T\subseteq S$ to $\bigvee T$ in $E$); consequently, if $S$ is
independent then $|S|\leq\rk E$. 
\end{description}
There is an obvious analogy here with linear algebra, which 
becomes stronger in \S\ref{section:idempotents:geometric}
when $E$ is a geometric lattice. 



Here is our first presentation. Throughout this paper we adopt the
standard abuse whereby the same symbol is used to denote an element of
an abstract monoid given by a presentation and the corresponding element of the
concrete monoid that is being presented. Apart from the proof of the following (where we
temorarily introduce new notation to separate these out) the context ought to make clear what
is being denoted. 

\begin{proposition}
\label{presentations:result300}
Let $E$ be a finite graded atomic commutative monoid of idempotents with atoms $A$. Then
$E$ has a presentation with:
\begin{align*}
\text{generators:\hspace{1em}}&a\in A.&&\\
\text{relations:\hspace{1em}}
&ab=ba\,(a,b\in A),&&\text{(Idem2)}\\
&a_1\ldots a_k=a_1\ldots a_kb\,(a_i,b\in A),
&&\text{(Idem3)}\\
&\hspace*{0.5cm}\text{ for }a_1,\ldots,a_k\, ,(1\leq k\leq\rk E)\text{ independent and }b\leq\bigvee a_i.&&\\
\end{align*}
\end{proposition}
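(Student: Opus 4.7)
The plan is to show the obvious map $\pi$ from the abstract monoid $\bar E$ defined by the presentation onto $E$ is an isomorphism. Surjectivity is immediate from atomicity: the generators map onto the atoms of $E$, which generate $E$ as a monoid. The relations (Idem2) and (Idem3) are easily seen to hold in $E$: commutativity is automatic, and for independent $a_1,\dots,a_k$ with $b\le\bigvee a_i$, one has $a_1\cdots a_k\cdot b=\bigl(\bigvee a_i\bigr)\vee b=\bigvee a_i=a_1\cdots a_k$. So $\pi\colon\bar E\to E$ is a well-defined surjective homomorphism, and it remains to prove injectivity, which I will do by exhibiting a normal form for elements of $\bar E$.

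First I would extract idempotency: the instance of (Idem3) with $k=1$, $a_1=b=a$ (legitimate, since any singleton is independent by (I1) and $a\le a$) gives $a=aa$ in $\bar E$. Together with (Idem2) this shows that every element of $\bar E$ can be written as $\prod_{a\in T}a$ for some subset $T\subseteq A$, where the product is unordered and multiplicities are collapsed. For $x\in E$ put $S(x)=\{a\in A\mid a\le x\}$; the target of this product under $\pi$ is $\bigvee T\in E$, and $T\subseteq S(\bigvee T)$.

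The heart of the argument is the claim: for every $T\subseteq A$, if $x=\bigvee T$ then $\prod T=\prod S(x)$ in $\bar E$. By (I2), choose an independent $T_0\subseteq T$ with $\bigvee T_0=x$. Then for every atom $c\in S(x)$ we have $c\le\bigvee T_0$, so a single application of (Idem3) to the independent family $T_0$ gives $\prod T_0=\prod T_0\cdot c$. Iterating over the finitely many $c\in S(x)\setminus T_0$ yields $\prod T_0=\prod S(x)$. The same iteration, but run only over $c\in T\setminus T_0\subseteq S(x)$, gives $\prod T_0=\prod T$. Hence $\prod T=\prod S(x)$, proving the claim.

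Consequently the map $x\mapsto\prod S(x)$ is a well-defined right inverse to $\pi$: two products $\prod T_1,\prod T_2$ with $\bigvee T_1=\bigvee T_2=x$ in $E$ both reduce to $\prod S(x)$ in $\bar E$, so $\pi$ is injective, hence an isomorphism. The step I expect to be the main obstacle is precisely the claim in the previous paragraph: the care needed is that (Idem3) as stated requires its left-hand side product to come from an \emph{independent} set, so one cannot directly append a new atom to an arbitrary product. The fix is to use (I2) to first shrink any generating set $T$ of $x$ to an independent $T_0$ with the same join; once that is available the one-relation-at-a-time addition of atoms below $x$ is routine.
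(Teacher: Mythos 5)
Your proof is correct and takes essentially the same route as the paper's: both use (I2) to extract an independent subset with the same join, then apply \emph{(Idem3)} (together with commutativity and the $k=1$ idempotency instance) to append, one atom at a time, every atom lying below that join, thereby reducing any word to the normal form $\prod_{a\le x}a$. The only cosmetic difference is that you grow both $\prod T$ and $\prod S(x)$ out of the independent core $T_0$, whereas the paper multiplies the \emph{(Idem3)} relation into the original word and simplifies.
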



Notice that when $k=1$ the \emph{(Idem3)\/} relations are $a=a^2$ for
$a\in A$. To emphasise the point we separate these from the rest of
the \emph{(Idem3)\/} relations and call them family \emph{(Idem1)\/}.
Note also that the $\{a_1,\ldots,a_k,b\}$ appearing in \emph{(Idem3)}
are dependent.

\begin{proof}
We temporarily introduce alternative notation for the atoms and then
remove it at the end of the proof: we use Roman letters $a,b,\ldots$ for the atoms
$A$ of $E$ and their Greek equivalents $\aa,\bb,\ldots$ for a set in 1-1
correspondence with $A$.
Let $M$ be the quotient of the free monoid on the $\aa\in A$ by the congruence generated
by the relations \emph{(Idem2)}-\emph{(Idem3)}, with Greek letters rather than Roman.
We have already observed that $E$ is generated by the $a\in A$, and the relations
\emph{(Idem2)}-\emph{(Idem3)}
clearly hold in $E$, so the map $\aa\mapsto a$ induces an epimorphism $M\rightarrow E$.
To see that this map is injective,
we choose representative words: for
any $e\in E\setminus\{\0\}$, let $A_e:=\{a\in A\,|\,a\leq e\}$ and 
$$
\ul{e}=\prod_{a\in A_e}\aa.
$$ 
It remains to show that any word in the $\aa$'s mapping to $e$ can be transformed into 
the representative word $\ul{e}$ using the 
relations \emph{(Idem1)}-\emph{(Idem3)}. Let $\aa_1\ldots\aa_k$ be 
such a word and let $b\in A_e$ be such that
$b\not=a_i$ for any $i$. If no such $b$ exists then the word is
$\ul{e}$ already and we are done. 
Otherwise, there is 
an independent subset $\{a_{i_1}\ldots,a_{i_\ell}\}$ with 
$e=a_{i_1}\vee\cdots\vee a_{i_\ell}$, and so we have an \emph{(Idem3)} relation
$\aa_{i_1}\cdots\aa_{i_\ell}=\aa_{i_1}\cdots\aa_{i_\ell}\bb$. Multiplying both sides by $\aa_1\ldots\aa_k$, 
reordering using \emph{(Idem1)} and removing redundancies using \emph{(Idem2)}, we obtain
$\aa_1\ldots\aa_k=\aa_1\ldots\aa_k\bb$. Repeat this until the word is $\ul{e}$.
\qed
\end{proof}



For a simple example, the Boolean lattice $\BB_X$ of rank $n$ has atoms the 
$a_i=\{1,\ldots,\wh{i},\ldots,n\}$ with 
$\bigvee a_{i_j}$ the set $X$ with the indices $i_j$ omitted. Removing an atom from 
this join has the effect of re-admitting the corresponding index. The resulting join 
is thus strictly smaller than $\bigvee a_{i_j}$, and we conclude that any set of atoms is independent. 
As an \emph{(Idem3)\/} relation in Proposition \ref{presentations:result300} arises
as a result of a set $\{a_1,\ldots,a_k,b\}$ of dependent atoms, 
the \emph{(Idem3)\/} relations
are vacuous when $k>1$ and we have a presentation
with generators $a_1,\ldots,a_n$ 
and relations $a_i^2=a_i$ and $a_ia_j=a_ja_i$ for all $i,j$.

\subsection{Face monoids of polytopes}
\label{section:idempotents:polytopes}

In \S\ref{section:renner} we will encounter a class of commutative monoids of
idempotents that are isomorphic to the face lattices of convex polytopes. It
is to these that we now turn.

A \emph{(convex) polytope\/} $P$ in a real vector space $V$ is the convex hull
of a finite set of points. The standard references for convex polytopes are
\cites{Grunbaum03,Ziegler95}.

An affine hyperplane $H$ supports $P$ whenever $P\cap H\not=\varnothing$ and
$P$ is contained in one of the closed half spaces given by $H$. A subset $f\subseteq P$
is an \emph{$r$-face\/} if $f=H\cap P$ for some supporting hyperplane and
a maximal affinely independent subset of $f$ contains $r+1$ points. We write 
$\dim f=r$. We consider $P$ itself to be a face (and say $P$ is a $d$-polytope
when $\dim P=d$) and $\varnothing$ to be the unique face of dimension $-1$. 
A $(d-1)$-face of a $d$-polytope is called a \emph{facet\/}. 

Let $\FF(P)$ be the faces of $P$ ordered by \emph{reverse\/}
inclusion. Once again this
is the opposite order to that normally used in the polytope
literature. In any case, it is well known
that $\FF(P)$ is a graded ($\rk f=\codim_Pf:=\dim P-\dim f$), atomic lattice with
atoms the facets, join $f_1\vee f_2=f_1\cap f_2$, meet $f_1\wedge f_2$ the smallest face
containing $f_1$ and $f_2$, unique minimal element
$\0=P$ and maximal element $\1=\varnothing$
(hence $\rk\FF(P)=\dim P$). We call the associated monoid the \emph{face monoid of the
polytope $P$\/}.

Two polytopes are combinatorially equivalent if their face lattices are isomorphic as lattices.
The \emph{combinatorial type\/} of a polytope is the isomorphism class of its face lattice,
and when one talks of a combinatorial description of a polytope, one means a description
of $\FF(P)$. 
In this paper, all statements about polytopes are true \emph{up to combinatorial type}.

\begin{example}[the $d$-simplex $\Delta^d$]
\label{eg:simplex}
Let $V$ be a $(d+1)$-dimensional 
Euclidean space
with basis $\{v_1,\ldots,v_{d+1}\}$. 
The convex hull $\Delta^d$ of the basis vectors
$\{v_1,\ldots,v_{d+1}\}$ lies
in the affine hyperplane with equation $\sum x_i=1$, hence the drop in dimension. 
Any subset of the $v_i$ of size $k+1$ spans a $k$-simplex.
If $X=\{1,\ldots, d+1\}$
then $\FF(\Delta^d)$ is isomorphic to the Boolean lattice $\BB_X$. 
Indeed, the map sending $Y\subseteq X$ to the convex hull of the points
$\{v_i\,|\,i\in Y\}$ is our isomorphism. Thus, any set of facets is independent.

In particular $\FF(\Delta^d)$ has a presentation with generators 
$a_1,\ldots,a_{d+1}$ and relations $a_i^2=a_i$ and $a_ia_j=a_ja_i$. 
We will meet this commutative
monoid of idempotents twice more in this paper: as the idempotents of the Boolean reflection 
monoids in \S\ref{section:popova:boolean}, and as the idempotents of the
Renner monoid of the ``classical'' linear monoid $\ov{k^\times\sl_d}$ in 
\S\ref{section:renner:classical1}.
\end{example}

\begin{example}[the polygons $P^2_m$]
\label{eg:polygon}
If the $d$-simplex has as many independent sets of facets as
it possibly can, the polygons are at the other extreme: they have no more than they absolutely
must. Identifying a $2$-dimensional Euclidean space with $\C$, let $P_m^2\,(m>2)$ be the convex hull
of the $m$-th roots of unity. Assume that $m>3$, $P_3^2$ being combinatorially equivalent to 
$\Delta^2$. The following are then clear: any set of $k\geq 3$ facets has empty join and contains
a pair of facets with empty join. Thus, if $f_1\ldots,f_k$ are independent, then $k\leq 2$, and 
we have a presentation with generators $a_1,\ldots,a_m$ and relations 
$a_i^2=a_i$, $a_ia_j=a_ja_i$ (all $i,j$)
and $a_ia_j=a_ia_ja_k$ for 
$|j-i|>1$ and all $k$.  The \emph{(Idem3)} relations are vacuous when $|j-i|=1$.
\end{example}

A $d$-polytope is \emph{simplicial\/} when each facet has the combinatorial type of a 
$(d-1)$-simplex. The $d$-simplex
is simplicial, as is:

\begin{example}[the $d$-octahedron or cross-polytope $\Diamond^d$]
\label{eg:octahedron}
Let $V$ be $d$-dimensional 
Euclidean and
$\Diamond^d$ the convex hull of the vectors $\{\pm v_1,\ldots,\pm
v_d\}$. 
To describe $\Diamond^d$ combinatorially, let
$\pm X=\{\pm 1,\ldots,$ $\pm d\}$ and call a subset $J\subset \pm X$ admissible whenever $J\cap(-J)=\varnothing$.
Alternatively, if $J^+=J\cap\,X$ and $J^-=J\cap(-X)$ then
$-J^+\cap J^-=\varnothing$. 
Note that the admissible sets are closed under passing to subsets
(hence under intersection) but not under unions. 
Let $E_0$ be the admissible subsets of $\pm X$ ordered by reverse inclusion. This poset has a number of 
minimal elements, namely, any set of the form $J^+\cup J^-$ with $J^+\subseteq X$
and $J^-=-X\setminus -J^+$. In particular these sets are completely determined by $J^+$. 
Let $E$ be $E_0$, together with $\pm X$, and ordered by reverse inclusion.
Then the map
sending $J\in E$ to the convex hull of the points
$$
\{v_i\,|\,i\in J^+\}\cup\{-v_{-i}\,|\,i\in J^-\},
$$
is a lattice isomorphism $E\rightarrow\FF(\Diamond^d)$. In particular, if $f_J,f_K$ are faces
corresponding to $J,K\in E$ then $f_J\vee f_K=f_{J\cap K}$.   
We will meet the monoid $E$ again in \S\ref{section:renner:classical2} as the idempotents of the
Renner monoids of the classical monoids
$\ov{k^\times\so_{2d}}$,
$\ov{k^\times\so_{2d+1}}$
and $\ov{k^\times\sp_{2d}}$.
\end{example}

For a convex polytope $P$ there is a \emph{dual polytope\/} $P^*$, 
unique up to combinatorial type, with the
property that $\FF(P^*)=\FF(P)^\opp$,
the opposite lattice to $\FF(P)$, i.e.: $\FF(P)^\opp$ has the same elements
as $\FF(P)$ and order $f_1\leq f_2$ in $\FF(P)^\opp$ if and only if $f_2\leq f_1$ in $\FF(P)$. 
Call $P$ \emph{simple\/} if and only if $P^*$ is \emph{simplicial\/}. Equivalently,
each vertex ($0$-face) of $P$ is contained in exactly $\dim P$ facets.
The $d$-simplex is self dual, corresponding to the fact that a Boolean lattice
is isomorphic as a lattice to its opposite. Two other simple polytopes are:

\begin{example}[the $d$-cube $\Box^d$]
\label{eg:cube}
Let $V$ be $d$-dimensional Euclidean and $\Box^d$ the convex hull of the vectors 
$\{\sum\ve_iv_i\,|\,\ve_i=\pm 1\}$. The $d$-cube is dual to the $d$-octahedron 
$\Diamond^d$, so is simple with $\FF(\Box^d)\cong\FF(\Diamond^d)^\opp$, which in
turn consists of the admissible $J\subset \pm X$, together with $\pm X$,
and ordered by inclusion.
We have $f_J\vee f_K=f_{J\cup K}$ if $J\cup K$ is admissible,
and $f_J\vee f_K=\varnothing$ otherwise.   
\end{example}

\begin{example}[the $d$-permutohedron]
\label{eg:permutohedron}
Let $V$ be $(d+1)$-dimensional Euclidean and let 
the symmetric group $\SS_{d+1}$ act on $V$ via $v_i\pi=v_{i\pi}$ for 
$\pi\in\SS_{d+1}$, writing $v\cdot\SS_{d+1}$ for the orbit of $v\in
V$.
Let $0\leq m_1<\cdots<m_{d+1}$ be integers and define
a $d$-permutohedron $P$ to be the convex hull of the orbit $(\sum
m_i\,v_i)\cdot\SS_{d+1}$.
The combinatorial type of $P$ does not depend on the $m_i$, so we will
just say \emph{the\/} $d$-permutohedron.
The drop in dimension comes about as $P$ lies in the affine hyperplane 
with equation $\sum x_i=\sum m_i$.
The $2$-permutohedron is a hexagon lying in the plane
$x_1+x_2+x_3=\sum m_i$; 
Figure \ref{fig:permutohedron}(c) shows the $3$-permutohedron. Our
interest in permutohedra comes about as the lattice $\FF(P)$ is
isomorphic to the idempotents
of the Renner monoid of \S\ref{section:renner:permutohedron}.

We will describe in some detail a 
combinatorial version of the $d$-permutohedron--it is just a reformulation
of a well known one. 
To this end, an orientation of a $1$-face (i.e.: edge) 
\begin{pspicture}(0,0)(2,.25)
\qdisk(.25,.125){.1}\qdisk(1.75,.125){.1}
\psline[linewidth=.3mm]{-}(.35,.125)(1.65,.125)
\end{pspicture}
of the $d$-simplex $\Delta^d$ has the form
\begin{pspicture}(0,0)(2,.25)
\qdisk(.25,.125){.1}\qdisk(1.75,.125){.1}
\psline[linewidth=.5mm,linecolor=red]{<-}(.35,.125)(1.65,.125)
\end{pspicture}
or 
\begin{pspicture}(0,0)(2,.25)
\qdisk(.25,.125){.1}\qdisk(1.75,.125){.1}
\psline[linewidth=.5mm,linecolor=red]{->}(.35,.125)(1.65,.125)
\end{pspicture}.
If $\Delta^d$ is a $d$-simplex with some subset of its edges oriented,
we say that the set of oriented edges is a \emph{partial
  orientation\/} $O$ of $\Delta^d$.

A partial orientation $O$ is \emph{admissible\/} when (i). any
$2$-face in $\Delta^d$ satisfies
$$
\begin{pspicture}(0,0)(14.3,3)
\rput(1,2){\BoxedEPSF{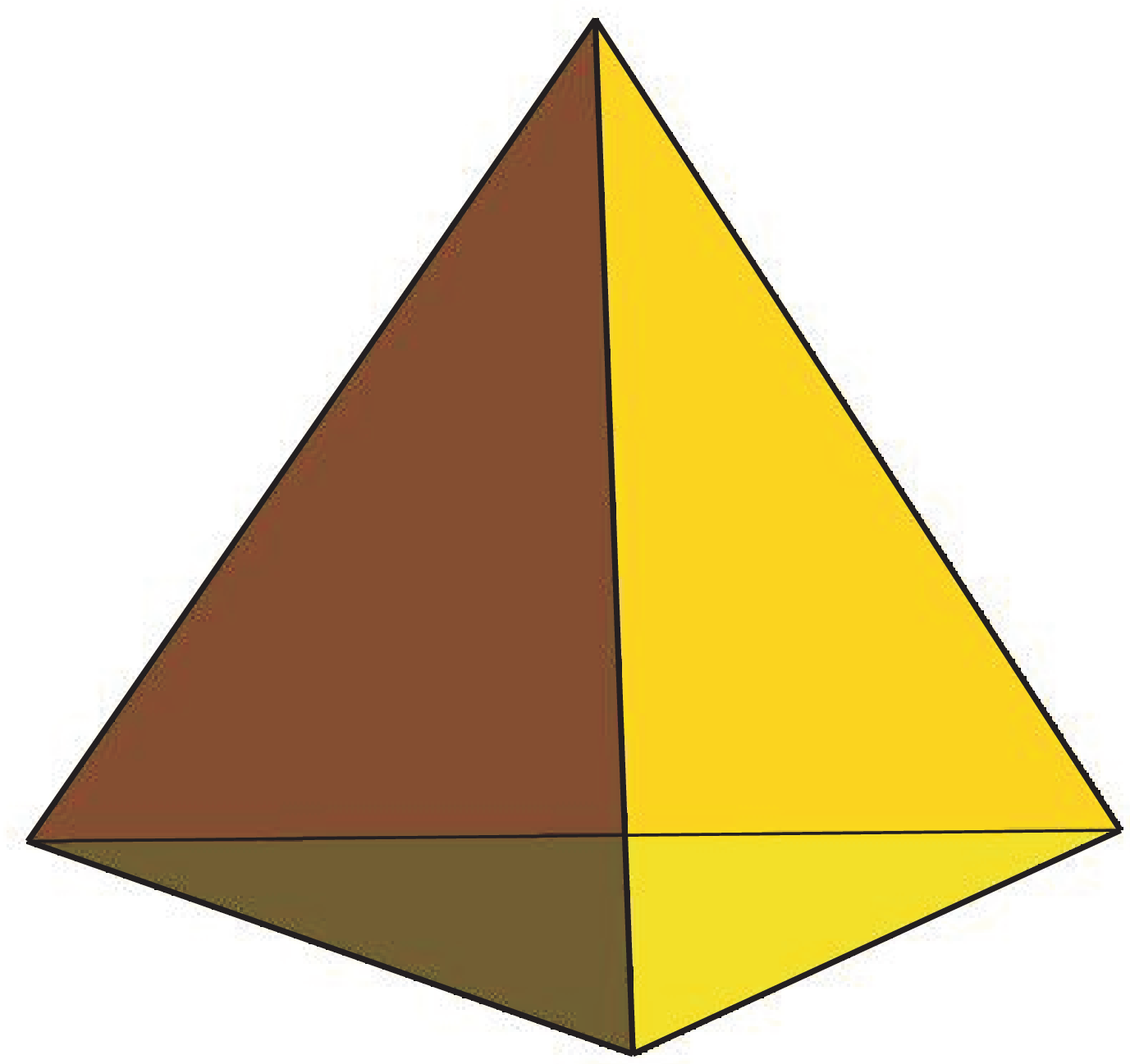 scaled 100}}
\rput(-.3,.5){
\rput(5,1){\BoxedEPSF{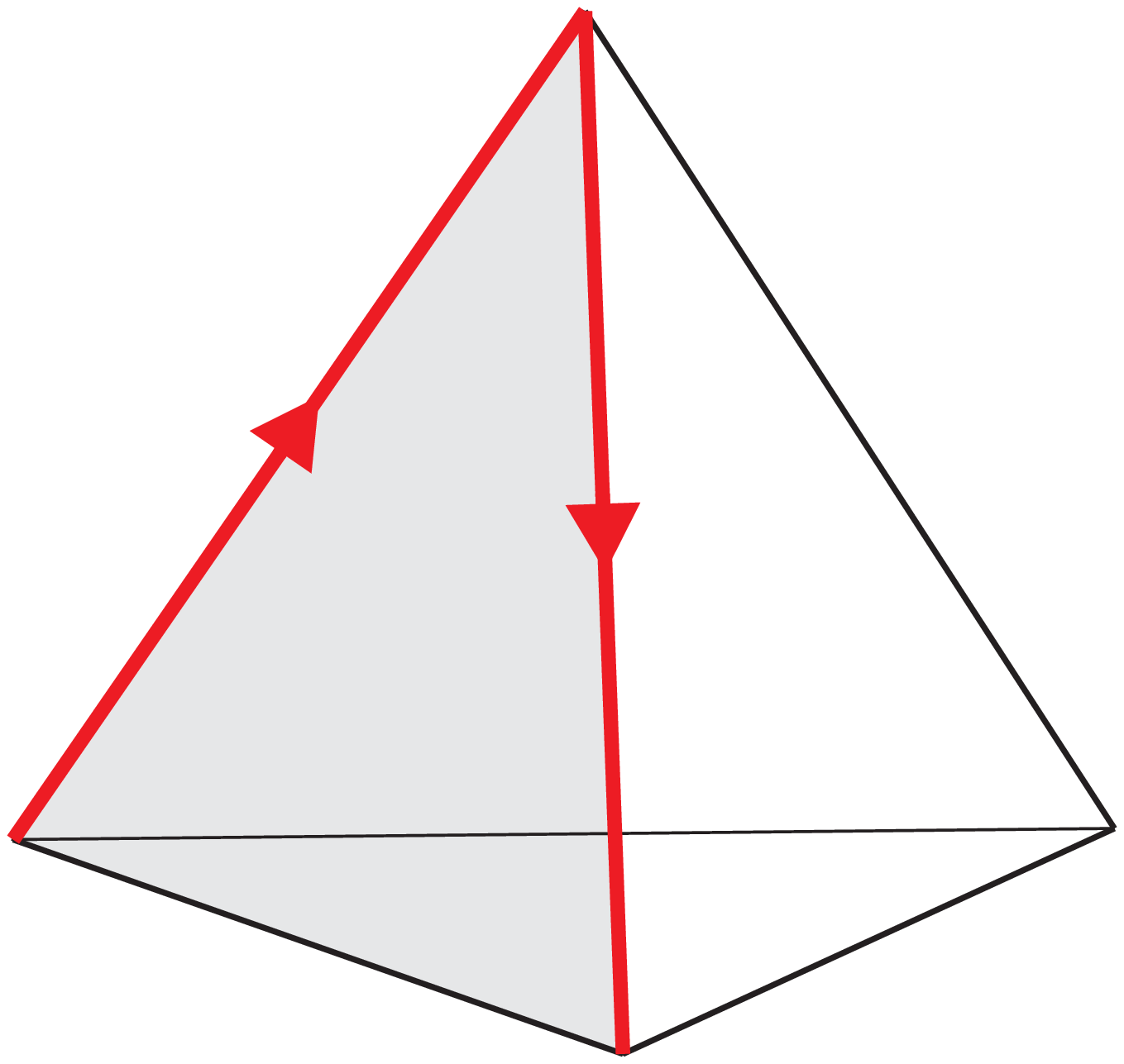 scaled 200}}
\rput(6.5,1){$\in O$}
}
\rput(7.3,1.5){$\Rightarrow$}
\rput(-.7,.5){
\rput(10,1){\BoxedEPSF{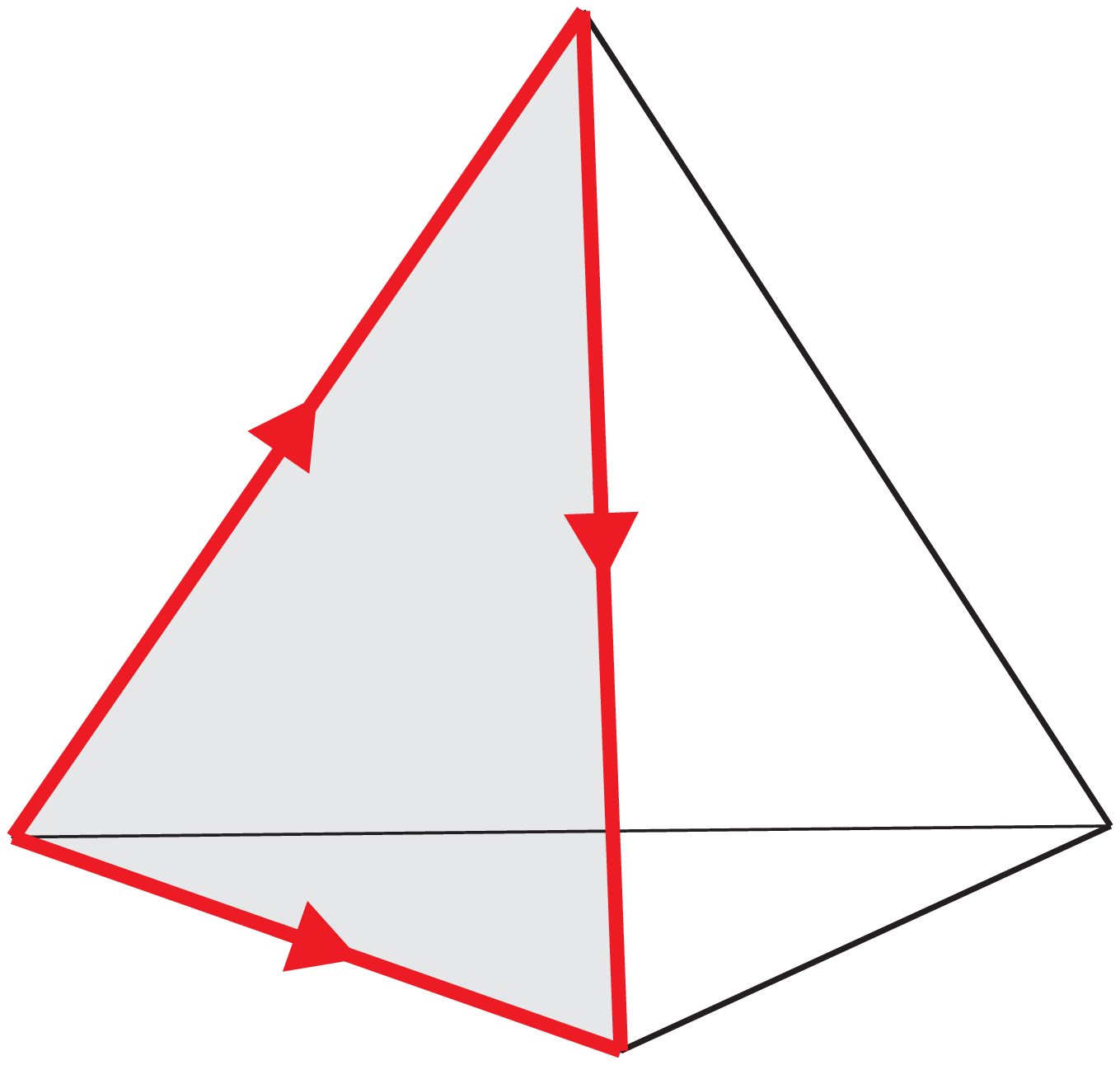 scaled 200}}
\rput(11.5,1){$\in O$}
}
\rput(2,2.2){$\Delta^d$}
\end{pspicture}
$$
and (ii). every $2$-face in $\Delta^d$ has either $0$ or $\geq 2$ of its incident edges in $O$.
We call these two properties \emph{transitivity\/} and \emph{incomparability\/}.

Let $E_0$ be the set of admissible partial orientations of $\Delta^d$ and define $O_1\leq O_2$ 
iff every edge in $O_1$ is also in $O_2$ and with the same
orientation, i.e.: the order is just inclusion in the obvious sense.
This is a partial order
on $E_0$ with a unique minimal element $\varnothing$ (i.e.: no edges oriented) and maximal 
elements the admissible partial orientations where every edge of $\Delta^d$ has been oriented. Formally
adjoin a unique maximal element $\1$ to get the poset $E$.
Define $O_1\vee O_2$ to be the union of the oriented edges in $O_1$ and $O_2$ if this 
gives an admissible partial orientation, or $\1$ if it doesn't. Then $E$ has the
structure of a join semi-lattice. Notice that if there is an edge oriented one way in $O_1$
and the other way in $O_2$ then $O_1\vee O_2$ is not even a partial orientation.
It turns out that this is the only obstacle to $O_1\vee O_2$ being
admissible, as the following show:

\begin{itemize}
\item If $O_1,O_2$ are admissible with $O_1\vee O_2$ a
partial orientation, then $O_1\vee O_2$ is transitive.
\item If $O_1,O_2$ are partial orientations satisfying incomparability
and with $O_1\vee O_2$ a
partial orientation, then $O_1\vee O_2$ satisifies incomparability.
\end{itemize}

Thus for $O_i\in E$ we have $\bigvee O_i<\1$
exactly when $\bigvee O_i$ is a partial orientation, i.e.: 
each edge is oriented consistently (if at all) among the $O_i$. 

For $J$ a non-empty proper subset of $X=\{1,\ldots,d+1\}$, let $\Delta_J$ be the 
sub-simplex of $\Delta^d$ spanned by the vertices $\{v_j\,|\,j\in J\}$ and 
$\Delta_{X\setminus J}$ similarly. Let $O_J$ be the partial orientation where the only
edges oriented are those not contained in either $\Delta_J$ or $\Delta_{X\setminus J}$;
necessarily such edges have one vertex $v_j\,(j\in J)$ and the other
$v_i\,(i\in X\setminus J)$. Orient the edge with the orientation running from the latter
vertex to the former, so that $O_J$ looks as follows:
$$
\begin{pspicture}(0,0)(14.3,2)
\rput(3.75,0){
\rput(0,0){
\pscircle[linewidth=.1mm](2,1){1}
\rput(2,1){$\Delta_J$}
}
\psline[linecolor=red,linewidth=.5mm]{->}(4.4,1.6)(2.6,1.6)
\psline[linecolor=red,linewidth=.5mm]{->}(4.4,1.4)(2.6,1.4)
\psline[linecolor=red,linewidth=.5mm]{->}(4.4,1.2)(2.6,1.2)
\psline[linecolor=red,linewidth=.5mm]{->}(4.4,1)(2.6,1)
\psline[linecolor=red,linewidth=.5mm]{->}(4.4,.8)(2.6,.8)
\psline[linecolor=red,linewidth=.5mm]{->}(4.4,.6)(2.6,.6)
\psline[linecolor=red,linewidth=.5mm]{->}(4.4,.4)(2.6,.4)
\rput(3,0){
\pscircle[linewidth=.1mm](2,1){1}
\rput(2,1){$\Delta_{X\setminus J}$}
}}
\end{pspicture}
$$
We leave it to the reader to show that the $O_J$ are admissible
partial orientations and moreover, are minimal 
non-empty elements in the poset $E$, i.e.: $O_J\in E$, and if $O\in E$ with $O<O_J$ then
$O=\varnothing$.

For any $O\in E$ define a relation $\sim$ on the vertices of
$\Delta^d$ by $u\sim v$ exactly when there is no path of
(consistently) oriented
edges from $u$ to $v$ or from $v$ to $u$. This is easily seen to be
reflexive and symmetric, and also transitive, the last using the
incomparibility and transitivity of the partial orientation $O$. Let
$\{\Lambda_1,\ldots,\Lambda_p\}$ be the resulting equivalence
classes. It is easy to show that given $\Lambda_i,\Lambda_j$ and
vertices $u\in\Lambda_i,v\in\Lambda_j$ that the edge connecting them
lies in $O$, oriented say from $u$ to $v$. Moreover, given any other
such pair $u',v'$, the edge connecting them is also oriented from $u'$
to $v'$. Define an order on the $\Lambda$'s by
$\Lambda_i\preceq\Lambda_j$ whenever the pairs are oriented from
$\Lambda_i$ to $\Lambda_j$ in this way. In particular, $\preceq$ is a
total order and so we write the equivalence classes (after
relabeling) as a tuple $(\Lambda_1,\ldots,\Lambda_p)$, i.e.: we have
an \emph{ordered partition\/}.

For the $O_J$ above we just get $(X\setminus J,J)$ via this
process. If $O\in E$ and $(\Lambda_1,\ldots,\Lambda_p)$ is the
corresponding ordered partition then let
$J_k=\Lambda_k\cup\cdots\cup\Lambda_p$. We leave the reader to see
that we can then write
\begin{equation}
  \label{eq:18}
O=\bigvee_{k=2}^{p} O_{J_k},
\end{equation}
an expression for $O$ as a join of atomic $O_J$. In particular the
$O_J$ comprise \emph{all\/} the atoms in $E$.

\begin{proposition}
\label{presentations:result400}
Let $P$ be the $d$-permutohedron and $E$ the poset of admissible partial
orientations of the $d$-simplex with a formal $\1$ adjoined. 
If $O\in E$ is given by (\ref{eq:18}), let $f_O$ be the convex
hull of those vertices $\sum m_{i\pi} v_i$ such that 
$$
{\textstyle \sum_{j\in J_k}m_{j\pi}=m_1+\cdots+m_{|J_k|}}
$$
for all $k$. Then $O\mapsto f_O$ is an isomorphism $E\cong\FF(P)$  of
lattices. Moreover, facets $f_J:=f_{O_J},f_K:=f_{O_K}$ are disjoint if
and only if 
neither of $J,K$ is contained in the other, i.e.: $J\not=J\cap K\not=K$. 
\end{proposition}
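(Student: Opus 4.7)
The plan is to identify $E\setminus\{\1\}$ with the lattice of ordered set partitions of $X=\{1,\ldots,d+1\}$ and then invoke the classical description of the face lattice of the permutohedron in those terms. The assignment $O\mapsto(\Lambda_1,\ldots,\Lambda_p)$ already constructed in the text furnishes one half of a bijection; for the inverse I would start from an ordered partition $(\Lambda_1,\ldots,\Lambda_p)$ and orient every edge between $v_j\in\Lambda_i$ and $v_k\in\Lambda_{i'}$ with $i<i'$ from $v_j$ to $v_k$, leaving edges within a block unoriented. Transitivity and incomparability of this $O$ follow directly from the total ordering of the blocks, and the two constructions are easily seen to be mutually inverse. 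Under this bijection the atoms $O_J$ of $E$ correspond precisely to the two-block partitions $(X\setminus J,J)$, and the decomposition (\ref{eq:18}) simply records $O$ as the join of the atoms coming from the tails $J_k=\Lambda_k\cup\cdots\cup\Lambda_p$.

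The next step is to check that the order on $E$ (inclusion of oriented edge sets) matches the reverse refinement order on ordered partitions, so that $O_1\vee O_2$ in $E$ corresponds to the coarsest common refinement of the two ordered partitions when such a refinement exists and to $\1$ otherwise. With this in hand, I would appeal to the standard fact (see for instance \cite{Ziegler95}) that $\FF(P)$ for the $d$-permutohedron is isomorphic, under reverse inclusion, to the lattice of ordered partitions of $X$ under reverse refinement, the face attached to $(\Lambda_1,\ldots,\Lambda_p)$ being exactly the convex hull of those vertices $\sum m_{i\pi}v_i$ for which $\sum_{j\in J_k}m_{j\pi}=m_1+\cdots+m_{|J_k|}$ for every $k$. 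Composing the two isomorphisms yields $O\mapsto f_O$ as claimed.

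For the facet disjointness assertion, $f_J$ and $f_K$ are disjoint as subsets of $V$ iff $f_J\vee f_K=\1$ in $\FF(P)$, which under the isomorphism is equivalent to $O_J\vee O_K=\1$ in $E$. By the two bullet points recalled just before (\ref{eq:18}), the only obstruction to $O_J\vee O_K$ being admissible is the presence of an edge oriented inconsistently by the two atoms. An edge $\{v_u,v_v\}$ lies in $O_J$ oriented from $u$ to $v$ precisely when $u\in X\setminus J$ and $v\in J$, and similarly for $O_K$, so an inconsistency occurs exactly when there exist $u\in J\setminus K$ and $v\in K\setminus J$, that is, exactly when neither $J\subseteq K$ nor $K\subseteq J$. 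The main obstacle in the whole argument is making sure the two orderings (edge-inclusion on the orientation side, reverse refinement on the partition side) are aligned correctly; once that is done the remainder is bookkeeping.
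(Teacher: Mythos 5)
Your argument is correct and follows essentially the same route as the paper: both identify $E$ with the ordered-partition description of $\FF(P)$ (citing Ziegler for the standard facts about the permutohedron), match the two orders via the tail sets $J_k$, and reduce the disjointness claim to deciding when $O_J\vee O_K=\1$, i.e.\ when the two atoms orient some edge of $\Delta^d$ inconsistently. Your explicit check of that last equivalence (an inconsistency exists iff one can find $u\in K\setminus J$ and $v\in J\setminus K$) is precisely the verification the paper declares ``easy to check'' and leaves to the reader.
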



\begin{figure}
  \centering
\begin{pspicture}(0,0)(14.3,6)
\rput(0,0){
\rput(2,3){\BoxedEPSF{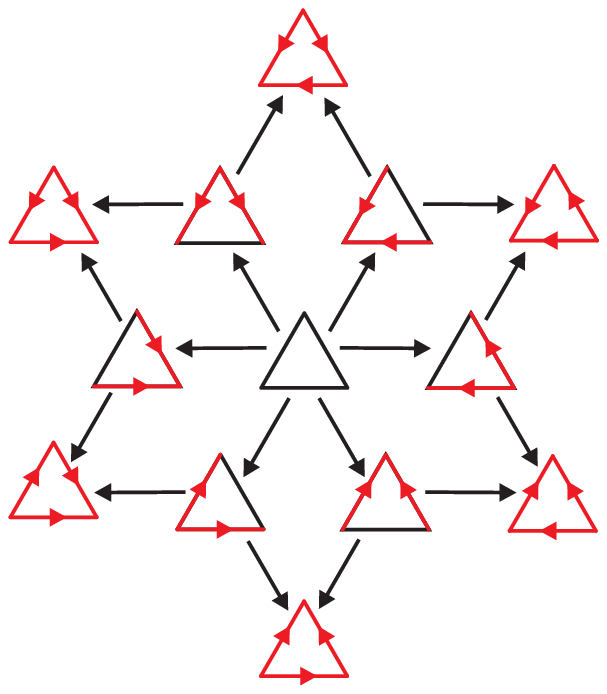 scaled 700}}
\rput(1,5){(a)}
}
\rput(0,-1){
\rput(7,4){\BoxedEPSF{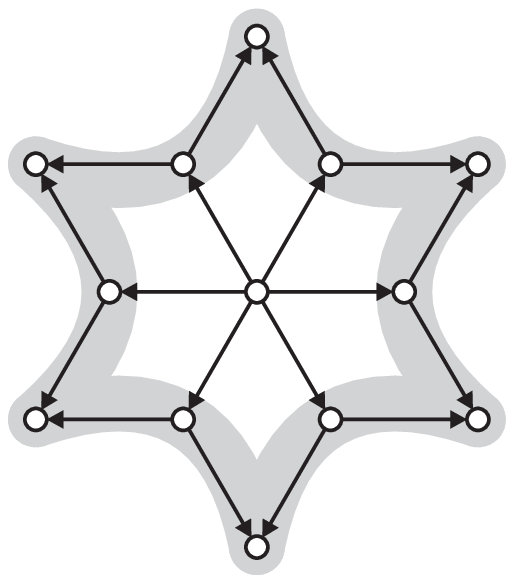 scaled 600}}
\rput(6.2,5.5){(b)}
}
\rput(0,1){
\rput(12,2){\BoxedEPSF{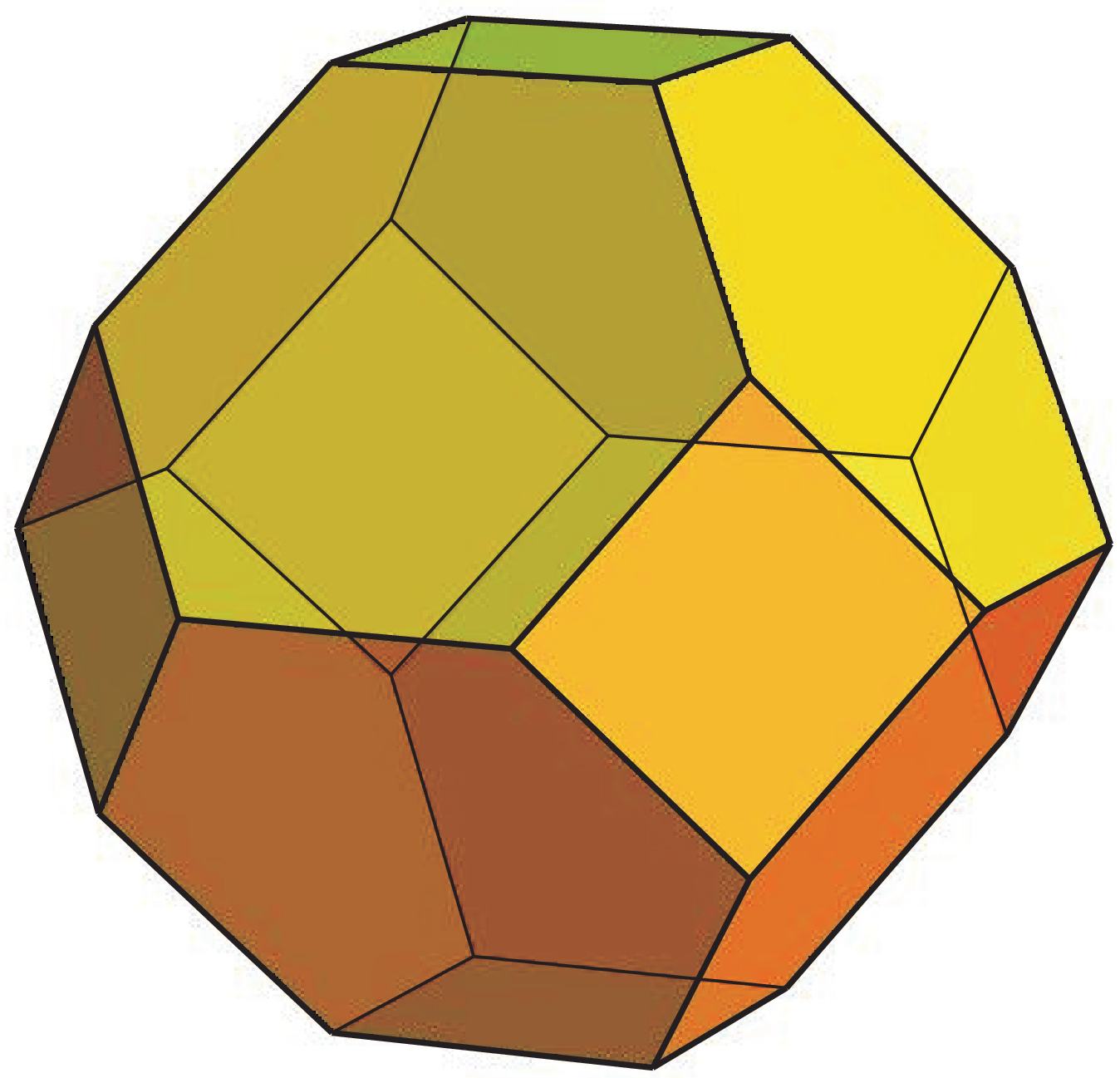 scaled 300}}
\rput(10.2,3.5){(c)}
}
\end{pspicture}
\caption{(a). the poset $E_0$ of partial admissible orientations of
  $\Delta^2$ with $O_1\rightarrow O_2$ indicating $O_1<O_2$ (b). the poset $E_0$
  superimposed on a distorted $2$-permutohedron (or hexagon)
(c). the $3$-permutohedron.}
  \label{fig:permutohedron}
\end{figure}

\begin{proof}
That $O\mapsto f_O$ is a well defined map and a bijection is well known (see,
e.g.: \cite{Ziegler95}*{Lecture 0}). If $O_1\leq O_2$ in $E$ 
then each $J_{2k}$ coincides with some $J_{1k'}$. Thus, if the
$f_{O_i}$ are the convex hulls of sets of vertices $S_i$ as in the
Proposition, we
have $S_2\subseteq S_1$ and so $f_{O_1}\leq f_{O_2}$. This
argument can be run backwards, so that we have a poset isomorphism.
For the final part, $f_J\cap f_K=\varnothing$ iff $O_J\vee O_K=\1$,
and it is easy to check that this happens exactly when $J\not=J\cap K\not=K$. 
\qed
\end{proof}

The $d$-pemutohedron is well known to be simple:  
the vertices correspond to the maximal admissible partial
orientations, hence those with all edges oriented.
Alternatively, the corresponding ordered partition has blocks
$\Lambda_i$ of size $1$, hence we have a total order of $\{1,\ldots,d+1\}$. 
Thus a vertex 
of the permutohedron corresponds to 
an $O$ with the property that the vertices of $\Delta^d$ can be renumbered 
with an edge oriented from $v_i$ to $v_j$ if and only if $i<j$. In particular the $O_J\leq O$
are those with $J=\{k,\ldots,d+1\}$ for  $k>1$, of which there are
exactly $d$. Thus, each vertex of the $d$-permutohedron is contained
in $d$ facets.
\end{example}

Returning to generalities, it turns out that the face lattices of simple polytopes have particularly 
simple presentations as commutative monoids of idempotents. Recalling
the definition of independent atoms from \S\ref{section:idempotents:generalities}, 
we lay the groundwork for this with the
following result:

\begin{proposition}
\label{presentations:result500}
Let $P$ be a simple $d$-polytope. 
\begin{enumerate}
\item If $v$ is a vertex of $P$ then the interval 
$[P,v]:=\{f\in\FF(P)\,|\,P\leq f\leq v\}$
is a Boolean lattice of rank $d$. 
In particular, facets $f_1,\ldots,f_k$ with $\bigvee f_i<\varnothing$ are independent. 
\item Let $P$ be the $d$-cube or the $d$-permutohedron and $f_1,\ldots,f_k\in\FF(P)$
independent facets with $\bigvee f_i=\varnothing$. Then $k\leq 2$. 
\end{enumerate}
\end{proposition}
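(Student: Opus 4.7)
The plan is to prove the two parts separately, leveraging the combinatorial descriptions already set up.

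For Part~(1), the key input is that a simple $d$-polytope has a $(d-1)$-simplex as its vertex figure at every vertex $v$. A hyperplane separating $v$ from the other vertices induces a poset isomorphism between the faces of $P$ containing $v$ and the face lattice of this vertex figure. Combined with Example~\ref{eg:simplex}, this gives $[P,v]\cong\FF(\Delta^{d-1})\cong\BB_X$ with $|X|=d$, so $[P,v]$ is Boolean of rank $d$. If $\bigvee f_i<\varnothing$ then $\bigcap f_i$ is a nonempty face of $P$ containing some vertex $v$, and the $f_i$ are then facets through $v$, hence atoms of $[P,v]$. Since the intersection of faces containing $v$ still contains $v$, joins in $[P,v]$ agree with those in $\FF(P)$, so independence in the interval implies independence in $\FF(P)$. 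Every set of atoms in a Boolean lattice is independent (the observation following Proposition~\ref{presentations:result300}), giving the claim.

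For Part~(2) on the $d$-cube, atoms of $\FF(\Box^d)$ are the singletons $f_{\{i\}}$ for $i\in\pm X$, and by Example~\ref{eg:cube} the join $\bigvee f_{\{i_j\}}$ equals $\varnothing$ exactly when $\{i_1,\ldots,i_k\}$ contains some non-admissible pair $\{j,-j\}$. Independence forces that removing each $i_l$ leaves an admissible subset, which in turn forces every $i_l$ to lie in every non-admissible pair of $\{i_1,\ldots,i_k\}$. Distinct non-admissible pairs are disjoint, so there can be only one such pair, and every $i_l$ lies in it; hence $k=2$ and $\{i_1,i_2\}=\{j,-j\}$.

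For the $d$-permutohedron, Proposition~\ref{presentations:result400} tells us that $f_J,f_K$ are disjoint iff $J,K$ are incomparable. First we upgrade this to the global statement: $\bigvee f_{J_i}=\varnothing$ iff some pair $J_i,J_j$ is incomparable. The $(\Leftarrow)$ direction is immediate; for $(\Rightarrow)$, if the $J_i$ are pairwise comparable they form a chain, and a direct inspection of the definition of $O_J$ shows no pair $O_{J_i},O_{J_j}$ has conflicting orientations (a conflict forces $a\in J_j\setminus J_i$ and $b\in J_i\setminus J_j$, contradicting comparability); the admissibility-preservation bullets in Example~\ref{eg:permutohedron}, applied iteratively, then give $\bigvee O_{J_i}\in E_0$, hence $<\1$. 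Now assuming $k\geq 3$, independence demands that $\{J_l:l\neq m\}$ be a chain for every $m$. For any two indices $a\neq b$, picking $m\notin\{a,b\}$ (possible since $k\geq 3$) places $J_a$ and $J_b$ in a common chain, so they are comparable. Hence $\{J_1,\ldots,J_k\}$ is a chain, contradicting $\bigvee f_{J_i}=\varnothing$, and so $k\leq 2$.

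The main obstacle is the upgrade step for the permutohedron: iterating the admissibility-preservation bullets of Example~\ref{eg:permutohedron} to reduce the global condition $\bigvee_i O_{J_i}=\1$ to pairwise incomparability of some $J_i,J_j$ requires some bookkeeping. Once that reduction is in place the chain-extraction argument is immediate. The cube case is a direct combinatorial inspection, and Part~(1) is routine once the vertex figure identification of $[P,v]$ is recalled.
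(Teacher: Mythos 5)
Your proof is correct and follows essentially the same route as the paper's: part (1) via the Boolean interval $[P,v]$ coming from the simplicial dual/vertex figure, and part (2) by showing that a collection of facets with join $\varnothing$ must contain a \emph{pair} with join $\varnothing$ (your comparability-of-the-$J_i$ chain argument for the permutohedron is just the contrapositive packaging of the paper's ``find a conflictingly oriented edge in two of the $O_i$'' step, and both rest on the same admissibility-preservation bullets). No gaps.
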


The first part is standard; indeed it is often stated as an equivalent definition of a
simple polytope as in \cite{Ziegler95}*{Proposition 2.16}. 
The second part is not true for an arbitrary
simple polytope: consider the triangular prism $\Delta^2\times [0,1]$.

\begin{proof}
The claim about the interval follows as $P$ has dual a simplicial poytope, and
$\FF(\Delta^{d-1})$ is Boolean of rank $d$; 
that a collection of facets with non-empty join are independent
follows from this and the comments 
at the end of \S\ref{section:idempotents:generalities} on the Boolean 
lattice of rank $d-1$.
For the second part we show that if $f_1,\ldots, f_k$ are facets with $\bigvee f_i=\varnothing$ then
there are $1\leq j<m\leq k$ with $f_j\vee f_m=\varnothing$; in particular $k\geq 3$ facets
with join $\varnothing$ are dependent. This uses the combinatorial descriptions of the
$d$-cube and $d$-permutohedron. The facets of $\Box^d$ correspond to the admissible 
$J\subset \pm X$ with $|J|=1$, and
$\bigvee f_{J_i}=\varnothing$ exactly when $J=\bigcup J_i$ is not admissible. In particular
there is an $1\leq\ell\leq d$ with $\pm \ell\in J$. But then one of the
admissible sets is $J_j=\{\ell\}$ and another is $J_m=\{-\ell\}$, and so 
$f_{J_j}\vee f_{J_m}=\varnothing$. 
The permutohedron is similar: let the facets $f_i$ correspond to admissible partial
orientations $O_i$ of $\Delta^d$. We have $\bigvee f_i=\varnothing$ exactly when $\bigvee O_i$
is not a partial orientation. Thus there is an edge of $\Delta^d$ and
$O_j,O_m$ with the edge oriented in different directions in these two. But then $f_j\vee f_m=\varnothing$.
\qed
\end{proof}

Part $1$ of Proposition \ref{presentations:result500} means that 
for a simple polytope the \emph{(Idem3)\/}
relations in Proposition \ref{presentations:result300} are vacuous
when $\bigvee a_i<\varnothing$;
part 2 means that for the $d$-cube and $d$-permutohedron the \emph{(Idem3)\/} relations
further reduce to $a_1a_2=a_1a_2b$ for each pair $a_1,a_2$ of disjoint facets. 

\begin{proposition}
\label{presentations:result600}
Let $E$ be the face monoid of a simple polytope $P$ with facets $A$. Then
$E$ has a presentation with:
\begin{align*}
\text{generators:\hspace{1em}}&a\in A.&&\\
\text{relations:\hspace{1em}}
&a^2=a\,(a\in A),&&\text{(Idem1)}\\
&ab=ba\,(a,b\in A),&&\text{(Idem2)}\\
&a_1\ldots a_k=a_1\ldots a_kb\,(a_i,b\in A),
&&\text{(Idem3)}\\
&\hspace*{0.5cm}\text{ for }a_1,\ldots,a_k\,,(2\leq k\leq\dim P)\text{ independent with }\bigvee a_i=\varnothing.&&\\
\end{align*}
\end{proposition}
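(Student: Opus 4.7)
The plan is to derive this presentation from Proposition \ref{presentations:result300}. Since $E = \FF(P)$ is a finite graded atomic commutative monoid of idempotents with $\rk \FF(P) = \dim P$, that result already supplies a presentation with generators $A$, the relations (Idem2), and the full family (Idem3) ranging over all independent tuples $a_1, \ldots, a_k$ with $1 \le k \le \dim P$ and $b \le \bigvee a_i$. Its $k = 1$ instances are (Idem1). It therefore suffices to show that every relation of this full family with $k \ge 2$ and $\bigvee a_i < \varnothing$ is a consequence of (Idem1) and (Idem2); what then remains matches precisely the relations in the statement.

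First I would take independent facets $a_1, \ldots, a_k$ (with $k \ge 2$) satisfying $e := \bigvee a_i < \varnothing$, together with any $b \le e$ in $E$. Reading the join in $\FF(P)$ as geometric intersection, $e < \varnothing$ says $\bigcap a_i$ is a non-empty face and so contains at least one vertex $v$ of $P$. In the reverse-inclusion order of $\FF(P)$ this reads $v \ge e$, so each $a_i$ and also $b$ lies in the interval $[P, v]$. By Proposition \ref{presentations:result500}(1), $[P, v]$ is isomorphic to a Boolean lattice $\BB_{A_v}$ of rank $d = \dim P$ whose atoms are the $d$ facets of $P$ through $v$; the $a_1, \ldots, a_k$ sit among those atoms.

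Next I would invoke the elementary fact that inside $\BB_X$ any element below a join $\bigvee_j c_j$ of atoms is itself the join of some subset of $\{c_j\}$: writing $c_j = X \setminus \{j\}$, the condition $b \le \bigvee_j c_j$ is $b \supseteq X \setminus \{j : j \text{ appears}\}$, whence $X \setminus b$ is a subset of those indices and $b = \bigvee_{i \notin b}(X \setminus \{i\})$ is a join of the corresponding sub-family. Applied in $[P, v]$, this forces $b = a_{i_1} \vee \cdots \vee a_{i_m}$ for some $\{i_1, \ldots, i_m\} \subseteq \{1, \ldots, k\}$. The relation $a_1 \cdots a_k = a_1 \cdots a_k b$ then reads
\[
a_1 \cdots a_k \;=\; a_1 \cdots a_k \, a_{i_1} \cdots a_{i_m},
\]
which collapses to an identity after commuting each $a_{i_j}$ on the right past its duplicate on the left using (Idem2) and removing the repetitions via (Idem1).

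The main (mild) obstacle is the careful translation between the reverse order on $\FF(P)$ and the geometric picture: specifically the observation that facets with a non-empty common intersection share a vertex and therefore lie in a single Boolean interval $[P, v]$ — this is exactly Proposition \ref{presentations:result500}(1), and once it is in hand the rest is routine. The bound $k \le \dim P$ is inherited from $k \le \rk \FF(P) = \dim P$ in Proposition \ref{presentations:result300}, so after the above reduction only the (Idem3) relations with $2 \le k \le \dim P$ and $\bigvee a_i = \varnothing$ survive, as claimed.
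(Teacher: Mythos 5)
Your argument is correct and is essentially the paper's: the paper also derives the proposition from Proposition \ref{presentations:result500}(1), observing that facets with join $<\varnothing$ in a simple polytope are independent, so that any $\{a_1,\ldots,a_k,b\}$ giving a non-trivial \emph{(Idem3)} relation (which must be dependent) cannot have $\bigvee a_i<\varnothing$, leaving exactly the relations with $\bigvee a_i=\varnothing$. One cosmetic point: since $b$ is itself an atom of the Boolean interval $[P,v]$, your ``join of a subfamily'' is forced to be a single $a_i$, so the substitution of the word $a_{i_1}\cdots a_{i_m}$ for the generator $b$ is literally the identity $b=a_i$ and needs no separate justification.
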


Combining this presentation with part 2 of Proposition \ref{presentations:result500}
and the combinatorial descriptions of the $d$-cube and $d$-permutohedron
gives:

\paragraph{The $d$-cube $\Box^d$:} has a presentation with generators
$a_{\pm 1},\ldots,a_{\pm d}$ and relations $a_i^2=a_i$ for all
$i$; $a_ia_j=a_ja_i$ for all
$i,j\in\{\pm 1,\ldots,\pm d\}$ and 
$$
a_{i}a_{-i}=a_ia_{-i}a_j
$$
for all $i\in\{1,\ldots,d\}$
and all $j$.

\paragraph{The $d$-permutohedron:}  has a presentation with generators
$a_J^{}$ for $\varnothing\not=J\subsetneq X=\{1,\ldots,d+1\}$ and
relations $a_J^2=a_J^{}$ for all $J$;
$a_J^{}a_K^{}=a_K^{}a_J^{}$ for all $J,K$, and 
$$
a_J^{}a_K^{}=a_J^{}a_K^{}a_L^{}
$$ 
for all $J\not=J\cap K\not=K$ and all $L$.


\begin{proof}[of these presentations]
The facets of the $d$-cube are parametrized by the admissible
$J\subset \pm X$ with $|J|=1$, 
and two such have join $\varnothing$ exactly when they correspond to admissible
$J=\{\ell\}$ and $K=\{-\ell\}$. Similarly, the facets of the $d$-permutohedron are
parametrized by the admissible partial orientations $O_J$, for $J$ a non-empty
proper subset of $X$, and two facets have
join $\varnothing$ exactly when the correspond to $O_J,O_K$ with 
$J\not=J\cap K\not=K$. The presentations follow.
\qed
\end{proof}

\begin{figure}
  \centering
\begin{pspicture}(0,0)(14.3,6)
\rput(1.5,0){
\rput(3,4.5){\BoxedEPSF{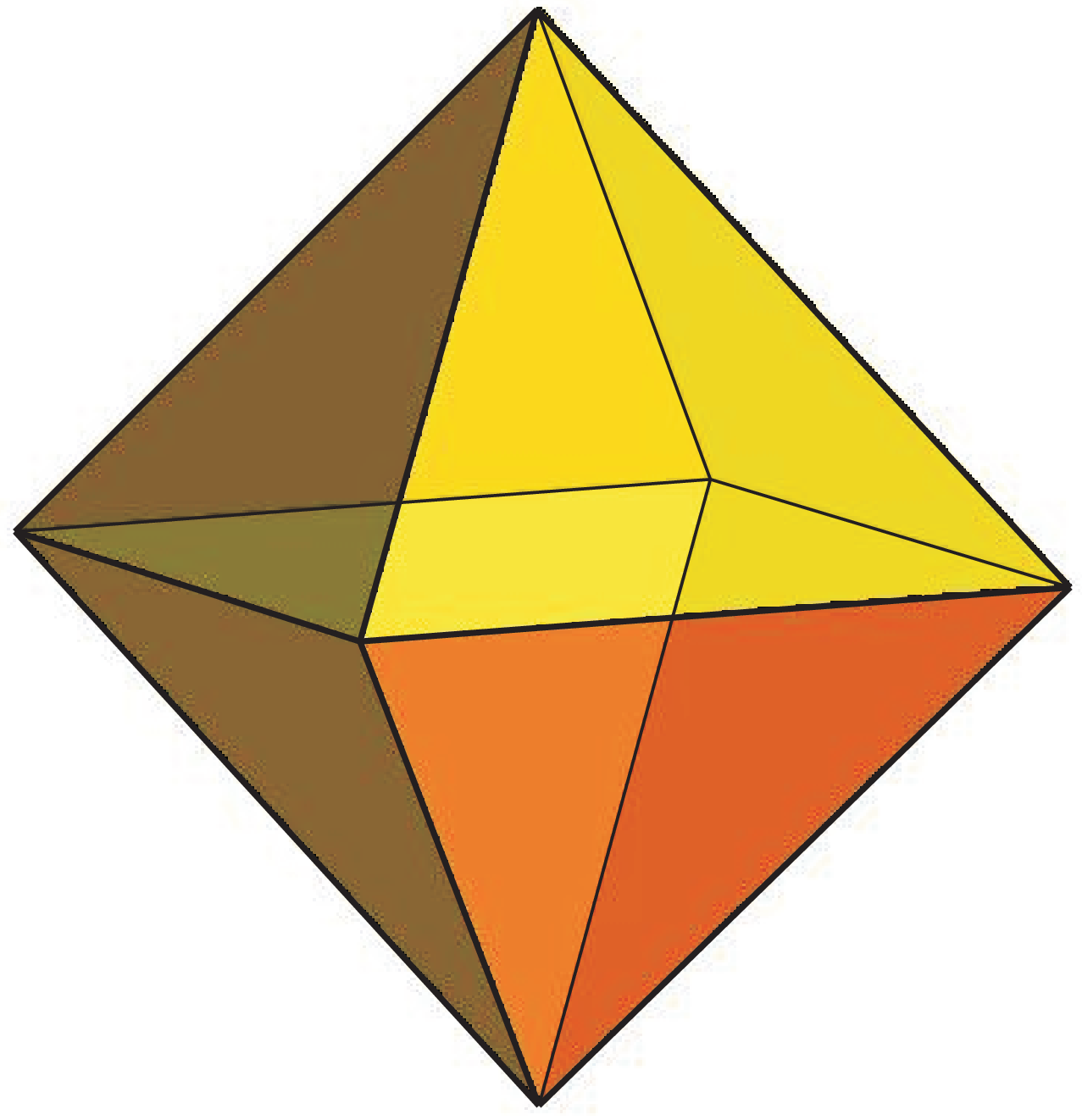 scaled 200}}
\rput(0,0){
\rput(8,3){\BoxedEPSF{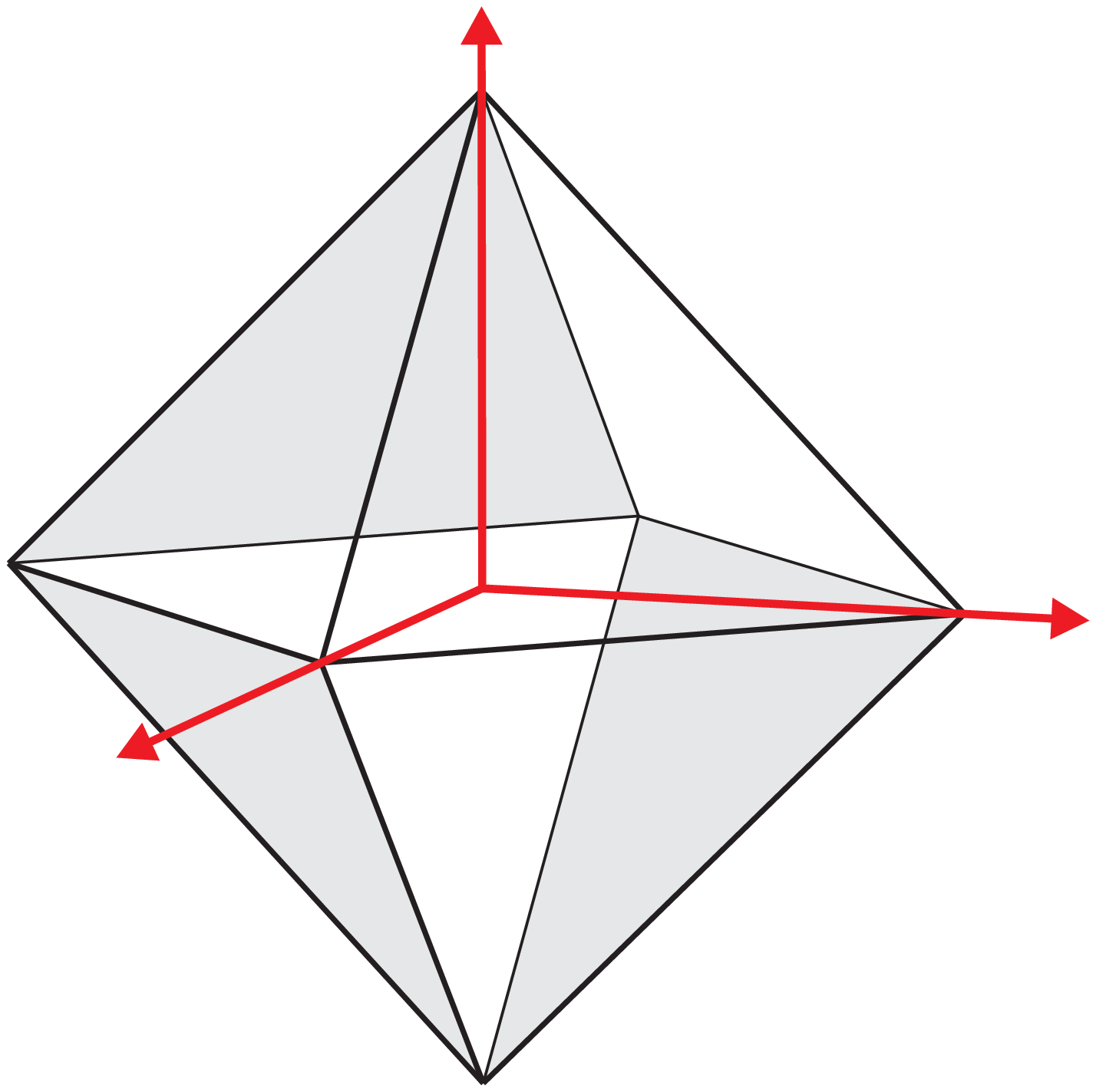 scaled 300}}
\rput(6.2,2){$v_1$}\rput(10.35,2.7){$v_2$}\rput(7.75,5.3){$v_3$}
\rput(6.5,1.3){\BoxedEPSF{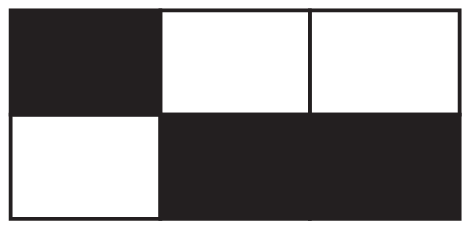 scaled 200}}
\rput(9,1.3){\BoxedEPSF{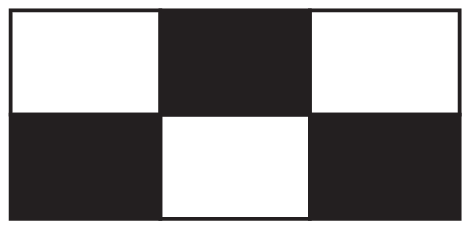 scaled 200}}
\rput(6.35,4.2){\BoxedEPSF{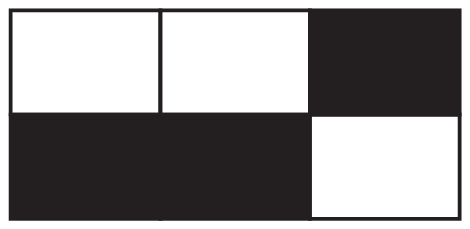 scaled 200}}
}
\rput[lb](2,1){\BoxedEPSF{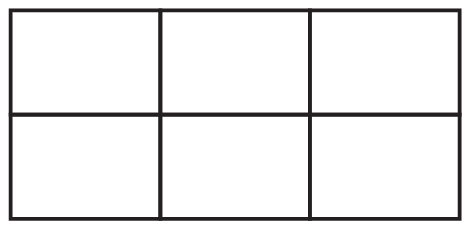 scaled 400}}
\rput(1.5,1.45){$\pm X=$}
\rput(2.3,1.65){$1$}\rput(2.9,1.65){$2$}\rput(3.5,1.65){$3$}
\rput(2.3,1.2){$-1$}\rput(2.9,1.2){$-2$}\rput(3.5,1.2){$-3$}
}
\end{pspicture}
  \caption{Independent triples of facets in the $3$-octahedron $\Diamond^3$: we have
$\{x_1,x_2,x_3\}=X=\{1,2,3\}$ and option (1) of Proposition \ref{presentations:result375}
is chosen for each $j$. The atoms in $E$ are depicted by blackened boxes
and the corresponding facets of the octahedron shaded. Every other triple is equivalent to this
one via a symmetry of $\Diamond^3$.}
  \label{fig:octahedron}
\end{figure}

Finally, we return to the $d$-octahedron $\Diamond^d$, where things are not so simple
(pun intended). Recalling the poset $E$ of Example \ref{eg:octahedron}, 
let $J\subseteq X=\{1,\ldots,d\}$ 
and write $a(J):=J\cup (-X\setminus -J)$ for the atoms in $E$ (note
that $J$ is now a subset of $X$ rather than $\pm X$). The independent sets
can be described:

\begin{proposition}\label{presentations:result375}
Let $\{x_1,\ldots,x_k\}\subseteq X$ with $k$ and $d\geq 3$, and 
$J_{10},\ldots,J_{k0}\subseteq X\setminus\{x_1,\ldots,x_k\}$.
For $j=1,\ldots,k$ we recursively define sets $J_{1j},\ldots,J_{kj}$
as follows: either,
\begin{description}
\item[(0).] do not add $x_j$ to $J_{j,j-1}$ but do add $x_j$ to all other
  $J_{i,j-1}$ for $i\not= j$; or
\item[(1).] do add $x_j$ to $J_{j,j-1}$ but do not add $x_j$ to all other
  $J_{i,j-1}$ for $i\not= j$.
\end{description}
Then, if $J_j:=J_{jk}$, the $a(J_1),\ldots,a(J_k)$ are independent atoms in $E$, and
every set of $k$ independent atoms arises in this way.
\end{proposition}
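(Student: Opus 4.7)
The plan is to translate independence of $\{a(J_1),\ldots,a(J_k)\}$ into an explicit combinatorial condition on the sets $J_i$, and then identify that condition with the recursive data specified by the Proposition.

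First I would unwind what independence means in $E$. Since joins in $E$ are intersections of subsets of $\pm X$ and the order is reverse inclusion, $\{a(J_1),\ldots,a(J_k)\}$ is independent exactly when, for each $i$, one can find an element $\epsilon_i\in(\bigcap_{j\neq i}a(J_j))\setminus a(J_i)$. Writing $\epsilon_i=\pm x_i$ with $x_i\in X$ and using the explicit form $a(J)=J\cup(-X\setminus -J)$, a short case-split on the sign of $\epsilon_i$ reduces the witness condition at index $i$ to one of two alternatives:
\begin{description}
\item[(a).] $\epsilon_i=x_i$: $x_i\notin J_i$ and $x_i\in J_j$ for all $j\neq i$;
\item[(b).] $\epsilon_i=-x_i$: $x_i\in J_i$ and $x_i\notin J_j$ for all $j\neq i$.
\end{description}

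Second, I would show that the witnesses force $x_1,\ldots,x_k$ to be pairwise distinct. Assume $x=x_i=x_{i'}$ for some $i\neq i'$ and pick a third index $m\notin\{i,i'\}$ (possible because $k\geq 3$). Of the four combinations of alternatives (a)/(b) at $i$ and $i'$, the two same-alternative cases contradict each other directly on whether $x\in J_i$ or $x\in J_{i'}$, while the two mixed cases contradict each other on whether $x\in J_m$. Hence the $x_i$ are distinct, and setting $J_{i0}:=J_i\setminus\{x_1,\ldots,x_k\}$ then produces initial data of the form demanded by the Proposition.

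Third, I would match alternatives (a)/(b) at index $i$ with options $(0)/(1)$ at step $j=i$ of the recursion. At step $j$, option $(0)$ inserts $x_j$ into every $J_{i,j-1}$ with $i\neq j$, while option $(1)$ inserts $x_j$ only into $J_{j,j-1}$. Since the initial $J_{i0}$ are disjoint from $\{x_1,\ldots,x_k\}$, each $x_i$ enters the various $J_j$ in exactly one step (namely step $i$), and the resulting membership pattern at step $i$ is precisely that described by alternative (a) or (b) respectively. Running this argument both ways yields the desired bijection: every independent $k$-set of atoms arises from the construction, and conversely every output of the construction admits witnesses $\pm x_i$ and is hence independent. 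The main obstacle is the distinctness step, where the four-case analysis is the only place the hypothesis $k\geq 3$ is genuinely used; once the $x_i$ are known to be distinct, the remaining work is essentially a bookkeeping match between the two descriptions.
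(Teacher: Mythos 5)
Your proposal is correct and follows essentially the same route as the paper: translating independence into the existence of a witness $\pm x_i$ in $\bigl(\bigcap_{j\neq i}a(J_j)\bigr)\setminus a(J_i)$, splitting on the sign of the witness to recover the two options, proving distinctness of the $x_i$ via a third index, and then matching the resulting membership pattern with the recursive construction. The only difference is cosmetic — you organize the distinctness step as a four-case contradiction where the paper argues directly — so there is nothing further to add.
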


Thus, at the $0$-th step we have the sets $J_{10},\ldots,J_{k0}$;
at the $1$-st step  either add $x_1$ to $J_{10}$ and not to the
others, or vice-versa; iterate. 

The restriction $d\geq 3$ is partly for convenience, and partly as
$\Diamond^2=P^2_4$ has been done already. 
Figure \ref{fig:octahedron} illustrates the independent triples of facets in the $3$-octahedron:
we have $X=\{1,2,3\}$ and $2^3$ independent triples corresponding to a choice of the 
(0)-(1) options in Proposition \ref{presentations:result375}. Letting $x_j=j$ 
(hence $J_{j0}=\varnothing$) and choosing option (1)
for each $j$ gives the atoms $a(1)=\{1,-2,-3\},a(2)=\{-1,2,-3\}$ and 
$a(3)=\{-1,-2,3\}$ corresponding to the 
shaded triple of faces. Any other triple of independent facets is equivalent to this one via a 
symmetry of the octahedron.

\begin{proof}
By definition a set $a(J_1),\ldots,a(J_k)$ of atoms is independent exactly when
for all $j=1,\ldots,k$ we have
$\bigcap_{i\not=j} a(J_i)\supsetneq \bigcap_i a(J_i)$
Equivalently, for each $j$ there is an $x_j\in\pm X$ with $x_j\not\in J_j$ but
$x_j\in J_\ell\,(\ell\not=j)$. Rephrasing in terms of the $J_j$ rather than the $a(J_j)$, we have 
the $a(J_1),\ldots,a(J_k)$ independent if and only if
for each $j$, either
\begin{description}
\item[(0).] there is an $x_j\in X$ with $x_j\not\in J_j$ and $x_j\in J_\ell\,(\ell\not= j)$, or
\item[(1).] there is an $x_j\in X$ with $x_j\in J_j$ and $x_j\not\in J_\ell\,(\ell\not= j)$.
\end{description}
We claim that the $x_1,\ldots,x_k$ so obtained are distinct. Let $i,j,m$ be distinct 
and suppose that $x_j\in J_j$ and hence $x_j\not\in J_m$, i.e.: we have option (1) above for $j$. 
If $x_i\not\in J_j$ then $x_i\not= x_j$. If $x_i\in J_j$ then this has happened
because option (0) was chosen for $i$, and so in particular $x_i\in
J_m$, and $x_i\not= x_j$
in this case too. Starting instead with $x_j\not\in J_j$, the argument is similar. 

If $\{x_1,\ldots,x_k\}$ are as given in the statement of the Proposition then for each $j$ the set
$J_j:=J_{jk}$ satisfies one of (0) or (1) above, hence the $a(J_j)$
are idependent. On the other hand
if $a(J_1),\ldots,a(J_k)$ is an independent set then we have a set
$\{x_1,\ldots,x_k\}\subseteq X$ by (0) and (1) above, and letting 
$J_{j0}=J_j\cap\setminus\{x_1,\ldots,x_k\}$ gives $J_j=J_{j0}$.
\qed
\end{proof}

Let $\mathit{Ind}_k$ be the set of independent tuples
$(a(J_1),\ldots,a(J_k))$ arising via Proposition \ref{presentations:result375}.

\paragraph{The $d$-octahedron $\Diamond^d$:} has a presentation with generators $a_J^{}$
for $J\subseteq X=\{1,\ldots,d\}$ and relations $a_J^2=a_J^{}$ for
all $J$; $a_J^{}a_K^{}=a_K^{}a_J^{}$ for all $J,K$ and 
$$
a_{J_1}\ldots a_{J_k}=a_{J_1}\ldots a_{J_k} a_{K}
$$
for all $(a(J_1),\ldots,a(J_k))\in\mathit{Ind}_k$ with $2\leq k\leq d$ and
all $a(K)\supseteq\bigcap a(J_i)$.


\subsection{Geometric monoids}
\label{section:idempotents:geometric}

Suppose now that $E$ is a lattice, hence with both joins $\vee$ and meets
$\wedge$. 
A graded atomic lattice $E$ is \emph{geometric\/}
when
\begin{equation}
  \label{eq:6}
\rk(a\vee b)+\rk(a\wedge b)\leq \rk(a)+\rk(b),
\end{equation}
for any $a,b\in E$.
We will call the corresponding commutative monoid 
of idempotents \emph{geometric\/}.

Beginning with a non-example, the face lattices of polytopes are not in general
geometric: if $f_1,f_2$ are facets of the $n$-cube with $f_1\vee f_2=\varnothing$,
then the left hand side of (\ref{eq:6}) is $n$ and the right hand side is $2$. 

The canonical example of a geometric lattice is the collection of all subspaces of a 
vector space under either inclusion/reverse 
inclusion, where (\ref{eq:6}) is a well known equality.
The example that will preoccupy us is the following: a
\emph{hyperplane arrangement\/} is a finite set $\AA$ of linear hyperplanes in a vector space $V$,
and the intersection lattice $\HH$ is the set of all intersections of elements of $\AA$ 
ordered by reverse inclusion, with the null intersection taken to be $V$. The result
is a geometric lattice \cite{Orlik92}*{\S2.1} with $\rk(A)=\codim A$,
atoms the hyperplanes $\AA$; $\0=V$ and $\1=\bigcap_{H\in\AA}H$.
If $\AA$ are the reflecting hyperplanes of a reflection group $W\subset GL(V)$ then 
$\AA$ is called a \emph{reflection\/} or \emph{Coxeter\/}
arrangement. If $W=W(\Phi)$ for $\Phi$ some finite root system, we
will write $\HH(\Phi)$ for the intersection lattice of the Coxeter arrangement.

The linear algebraic analogy of \S\ref{section:idempotents:generalities}
can be pushed a little further in a geometric lattice:
\begin{description}
\item[(I6).] For any set $S$ of atoms we have $\rk(\bigvee S)\leq|S|$, with $S$  independent 
if and only if $\rk(\bigvee S)=|S|$.
\item[(I7).] If $S$ is minimally dependent then $\bigvee S\setminus\{s\}=\bigvee S$ for all
$s\in S$.
\end{description}

That $\rk(\bigvee S)\leq |S|$ is a well known property of geometric lattices that 
follows from (\ref{eq:6})--see for example \cite{Orlik80}.
Indeed, 
(I6) is the normal definition of independence in a geometric lattice.

To see it, we show first by induction on the size of $|S|$
that if $\rk(\bigvee S)<|S|$ then $S$ is dependent:
a three element set with $\rk(\bigvee S)<3$ is the join of any two of
its atoms, 
hence dependent, as the join of two atoms
always has rank two
(the result is vacuous if $|S|=2$ as the join of two distinct atoms
has rank $2$).
If $S$ is arbitrary and $\bigvee S\setminus\{s\}=\bigvee S$ for all
$s$ then $S$ is clearly dependent. Otherwise,
if $\bigvee S\setminus\{s\}<\bigvee S$ for some $s\in S$ with 
$\rk(\bigvee S\setminus\{s\})<\rk(\bigvee S)<|S|$, then
$\rk(\bigvee S\setminus\{s\})<|S\setminus\{s\}|$. By induction,
$S\setminus\{s\}$ is dependent, 
hence so is $S$.

On the other hand, if $\rk(\bigvee S)=|S|$ but $\bigvee S\setminus\{s\}=\bigvee S$ for 
some $s$, then $\rk(\bigvee S\setminus\{s\})=\rk(\bigvee S)=|S|>|\bigvee S\setminus\{s\}|$,
a contradiction. Thus $\rk(\bigvee S)=|S|$ implies that $S$ is independent, and we have 
established (I6).

Condition (I7) is a straightforward comparison
of ranks. Taking three facets of the $2$-cube (square) gives a minimally dependent set $S$
in the face lattice where $\bigvee S\setminus\{s\}=\bigvee S$ is true for only one of the 
three $s$, so this property is not enjoyed by arbitrary graded atomic lattices.

Minimal dependence comes into its own when we have a geometric lattice.
In particular we can replace the 
\emph{(Idem3)\/} relations of Proposition \ref{presentations:result300}
with a smaller set:

\begin{theorem}
\label{section:idempotents:result350}
Let $E$ be a finite geometric commutative monoid of idempotents with atoms $A$. Then
$E$ has a presentation with:
\begin{align*}
\text{generators:\hspace{1em}}&a\in A.&&\\
\text{relations:\hspace{1em}}
&a^2=a\,(a\in A),&&\text{(Idem1)}\\
&ab=ba\,(a,b\in A),&&\text{(Idem2)}\\
&\wh{a}_1\ldots a_k=\cdots=a_1\ldots \wh{a}_k\,(a_i\in A),
&&\text{(Idem3a)}\\
&\hspace*{1cm}\text{ for all }\{a_1,\ldots,a_k\}\text{minimally dependent.}&&\\
\end{align*}
\end{theorem}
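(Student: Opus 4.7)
The plan is to leverage Proposition \ref{presentations:result300}, which already provides a presentation with (Idem1), (Idem2), and the full family (Idem3). It therefore suffices to show two things in the geometric setting: first, that the (Idem3a) relations actually hold in $E$; and second, that (Idem1), (Idem2), (Idem3a) together imply every (Idem3) relation. Once these are established, the abstract monoid presented as in the statement maps isomorphically to $E$.

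For the first part, if $\{a_1,\ldots,a_k\}$ is minimally dependent then property (I7) gives $\bigvee_{i\neq j}a_i=\bigvee_i a_i$ for every $j$. Since multiplication in $E$ is just $\vee$, all the products $a_1\ldots\widehat{a}_j\ldots a_k$ coincide, which is exactly (Idem3a). So this direction is immediate.

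For the second part, fix an (Idem3) relation, that is, an independent tuple $a_1,\ldots,a_k$ and an atom $b\leq\bigvee a_i$; we must show $a_1\ldots a_k=a_1\ldots a_k\, b$ in the monoid presented by (Idem1), (Idem2), (Idem3a). If $b=a_j$ for some $j$, the equation is just $a_1\ldots a_k=a_1\ldots a_k\, a_j$, which follows from (Idem1) and (Idem2). Otherwise $\{a_1,\ldots,a_k,b\}$ is dependent (since adjoining $b$ does not change the join), and by (I4) there is a subset $T'=\{a_{i_1},\ldots,a_{i_\ell}\}\subseteq\{a_1,\ldots,a_k\}$ with $T'\cup\{b\}$ minimally dependent. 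Applying (Idem3a) to this minimally dependent set and equating the word obtained by omitting $a_{i_1}$ with the word obtained by omitting $b$ gives
\[
a_{i_2}\ldots a_{i_\ell}\, b \;=\; a_{i_1}a_{i_2}\ldots a_{i_\ell}.
\]
Multiplying both sides on the left by $a_{i_1}$ and using (Idem1) to absorb the repeated factor yields
\[
a_{i_1}a_{i_2}\ldots a_{i_\ell}\, b \;=\; a_{i_1}a_{i_2}\ldots a_{i_\ell}.
\]
Finally, multiply both sides by $\prod_{j\notin\{i_1,\ldots,i_\ell\}} a_j$ and rearrange via (Idem2) to recover $a_1\ldots a_k=a_1\ldots a_k\, b$, as required.

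There is really no serious obstacle here; the only subtlety is choosing the right intermediate set, which is exactly what (I4) delivers. The geometric hypothesis enters only through property (I7), which is what lets us economise on relations by passing from arbitrary dependent supersets of independent sets to minimally dependent sets. Everything else is bookkeeping with (Idem1) and (Idem2).
\qed
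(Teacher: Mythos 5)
Your proposal is correct and follows essentially the same route as the paper's own proof: verify (Idem3a) holds via (I7), then deduce each (Idem3) relation by using (I4) to extract a minimally dependent subset $T'\cup\{b\}$, applying (Idem3a) to it, and multiplying back up with (Idem1)--(Idem2). Your write-up is if anything slightly more careful, spelling out the trivial case $b=a_j$ and the absorption step that the paper leaves implicit.
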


\begin{proof}
The
Theorem is proved if we can deduce the \emph{(Idem3)\/} relations 
of Proposition \ref{presentations:result300} 
from the relations above. 
Suppose then that $a_1\ldots a_k=a_1\ldots a_kb$ is 
an \emph{(Idem3)\/} relation with $\{a_1,\ldots,$ $a_k\}$
independent in $E$ and $b\leq\bigvee a_i$. Thus $\{a_1,\ldots,a_k\}$ is
independent and $\{a_1,\ldots,$ $a_k,b\}$ dependent, so by (I4) of 
\S\ref{section:idempotents:generalities} there are $a_{i_1},\ldots,a_{i_k}$ with
$\{a_{i_1},\ldots,a_{i_k},b\}$ minimally dependent. In particular, 
we have $a_{i_1},\ldots a_{i_k}=a_{i_1}\ldots a_{i_k}b$
by \emph{(Idem3a)}, and multiplying both 
sides by $a_1\ldots a_k$ and using \emph{(Idem1)}-\emph{(Idem2)} gives the result.
\qed
\end{proof}

It is sometimes convenient to use the (Idem3a) relations in the form:
$$
\begin{pspicture}(0,0)(14,0.5)
\rput(5,0.25){$a_1\ldots a_k=a_1\ldots\wh{a}_i\ldots a_k$}
\rput(12.5,0.25){\emph{(Idem3b)}}
\end{pspicture}
$$
for all $\{a_1,\ldots,a_k\}$ minimally dependent and all $1\leq i\leq k$. 

\subsection{Coxeter arrangements}
\label{section:idempotents:coxeter}

In \S\ref{section:popova:arrangement} we will encounter a class of
commutative monoids of idempotents isomorphic to 
the Coxeter arrangements 
$\HH(\Phi)$ for $\Phi$ the root systems of types
$A_{n-1},B_n$ and $D_n$. In this section we interpret Theorem
\ref{section:idempotents:result350}
for these monoids. 
We follow a similar pattern to the previous section: first we 
give the arrangement, then a combinatorial description (which as in \S\ref{section:idempotents:polytopes}
means a description of the lattice $\HH$) and then use this to identify the independent
and minimally dependent sets of atoms. It turns out to be convenient
to expand on an idea of Fitzgerald \cite{Fitzgerald03}.

\begin{example}[$\HH(A_{n-1})$ and the partition lattice $\Pi(n)$]
Let $V$ be Euclidean with orthonormal basis $\{v_1,\ldots,v_n\}$ and $\AA$ the
hyperplanes with equations $x_i-x_j=0$ for all $i\not=j$. Equivalently, if $\Phi$
is the type $A_{n-1}$ root system from Table 
\ref{table:roots1} then $\AA$ consists of the hyperplanes
$\{v^\perp\,|\,v\in\Phi\}$ and $W(\Phi)$ is the symmetric group acting on $V$ by permuting
the $v_i$. 

We remind the reader of the well known combinatorial description of $\HH(A_{n-1})$.
Let $X=\{1,\ldots,n\}$ and consider the partitions $\Lambda=\{\Lambda_1,\ldots,\Lambda_p\}$ of
$X$ ordered by refinement: $\Lambda\leq\Lambda'$ iff every block
$\Lambda_i$ of $\Lambda$ is 
contained in some block $\Lambda'_j$ of $\Lambda'$. 
This is a graded atomic lattice with
\begin{equation}
  \label{eq:15}
{\textstyle \rk\Lambda=\sum(|\Lambda_i|-1)}
\end{equation}
and atoms the partitions
having a single non-trivial block of the form $\{i,j\}$. The map sending $(v_i-v_j)^\perp$ to the
atomic partition $\{i,j\}$ extends to a lattice isomorphism $\HH\rightarrow\Pi(n)$
given by $X(\Lambda)\mapsto\Lambda$ where $\sum t_iv_i\in X(\Lambda)$
whenever $t_i=t_j$ for $i,j$ in the same block of $\Lambda$. 

To proceed further we borrow an idea from \cite{Fitzgerald03}: 
for a set $S$ of atoms in either $\HH(A_{n-1})$
or $\Pi(n)$, form the graph $\Gamma_S$ with vertex set $X$ and 
$|S|$ edges of the form:
$$
\begin{pspicture}(0,0)(14.3,1)
\rput(6,.15){
\pscircle(0,.5){.15}\pscircle(2,.5){.15}
\rput(0,.1){$i$}\rput(2,.1){$j$}
\psline[linewidth=.3mm]{-}(.15,.5)(1.85,.5)
}
\end{pspicture}
$$
for each atom $(v_i-v_j)^\perp$ or $\{i,j\}\in S$. 
Recall that a connected graph (possibly with multiple edges and loops)
having fewer edges than vertices cannot contain a circuit.
If $\Lambda=\bigvee S$ is the join in $\Pi(n)$, then the blocks of the
partition $\Lambda$
are the vertices in the connected components of $\Gamma_S$. 
Thus, by (\ref{eq:15}), $S$ is independent when the component corresponding to the block
$\Lambda_i$ has $|\Lambda_i|-1$ edges, i.e.: has a number of edges that is one less than
the number of its vertices. Such a connected graph is a tree, so $\Gamma_S$ is
a forest, and we have our independent sets.

The atoms $S$ are thus dependent when $\Gamma_S$ contains a circuit, and minimally
dependent when $\Gamma_S$ \emph{is\/} just a circuit.  
\end{example}

\begin{example}[$\HH(B_n)$]
Let $V$ be as in the previous example and $\AA$ the hyperplanes with equations
$x_i=x_i\pm x_j=0$ for all $i\not= j$; equivalently, if $\Phi$ is the type
$B_n$ root system from Table \ref{table:roots1}
then $\AA$ consists of the hyperplanes $v_i^\perp$
and $(v_i\pm v_j)^\perp$, with $W(\Phi)$ acting on $V$ by signed permutations
of the $v_i$ (see also the end of \S\ref{section:renner:classical1}).

A combinatorial description of $\HH(B_n)$ appears in 
\cite{Everitt-Fountain10}*{\S6.2} (see also \cite{Orlik92}*{\S6.4}):
a coupled partition is a partition of the form
$\Lambda=\{\Lambda_{11}+\Lambda_{12},\ldots,\Lambda_{q1}+\Lambda_{q2},
\Lambda_1,\ldots,\Lambda_p\}$, where the $\Lambda_{ij}$
and $\Lambda_i$ are blocks and $\Lambda_{i1}+\Lambda_{i2}$ is a
``coupled'' block. The $+$ sign is purely formal. Let $\TT$ be the set of 
pairs $(\Delta,\Lambda)$ where $\Delta\subseteq X=\{1,\ldots,n\}$ and $\Lambda$ is a coupled
partition of $X\setminus\Delta$. An order is defined in 
\cite{Everitt-Fountain10}*{\S5.2} making $\TT$ a graded atomic lattice with
\begin{equation}
  \label{eq:8}
{\textstyle \rk (\Delta,\Lambda)
=|\Delta|+\sum(|\Lambda_{i1}|+|\Lambda_{i2}|-1)+\sum(|\Lambda_i|-1).
}
\end{equation}
Let $X(\Delta,\Lambda)\subseteq V$ be the subspace with 
$v=\sum t_iv_i\in X(\Delta,\Lambda)$ exactly when 
$t_i=0$ for $i\in\Delta$; $t_i=t_j$ if 
$i,j$ lie in the same block of $\Lambda$ (either uncoupled or in a
couple); and $t_i=-t_j$ if $i,j$ lie in different 
blocks of the same coupled block. Then the map 
$X(\Delta,\Lambda)\mapsto(\Delta,\Lambda)$ is a lattice isomorphism 
$\HH(B_n)\rightarrow\TT$. 

If $S$ is a set of atoms in $\HH(B_n)$, let $\Gamma_S$ be the graph with vertex set $\{1,\ldots,n\}$
and edges given by the scheme:
$$
\begin{pspicture}(0,0)(14.3,1.75)
\rput(3,.65){
\pscircle(0,.5){.15}\pscircle(2,.5){.15}
\rput(0,.1){$i$}\rput(2,.1){$j$}\rput(1.1,1){$(v_i-v_j)^\perp$}
\rput(1,-.4){(a)}
\psline[linewidth=.3mm]{-}(.15,.5)(1.85,.5)
}
\rput(7,.65){
\pscircle(0,.5){.15}\pscircle(2,.5){.15}
\rput(0,.1){$i$}\rput(2,.1){$j$}\rput(1.1,1){$(v_i+v_j)^\perp$}
\rput(1,-.4){(b)}
\psline[linewidth=.3mm,doubleline=true]{-}(.15,.5)(1.85,.5)
}
\rput(9,.25){
\pscircle(2,1){.15}
\rput(2,.6){$i$}\rput(2.75,1.25){$v_i^\perp$}
\rput(2,0){(c)}
\psbezier[showpoints=false,linewidth=.3mm]{-}(1.9,1.1)(1.3,2)(2.7,2)(2.1,1.1)
}
\end{pspicture}
$$
A circuit is a closed path of type (a) and (b) edges, and a circuit 
is \emph{odd\/} if it contains an odd number of 
(b) type edges, and \emph{even\/} otherwise. 

If $\bigvee S=X(\Delta,\Lambda)\in\HH(B_n)$, 
then a vertex $i$ of $\Gamma_S$ is contained 
in $\Delta$ if and only if for all $v=\sum t_iv_i\in X(\Delta,\Lambda)$ 
we have $t_i=0$. 
In particular, $i\in\Delta$ if and only if every vertex in the
connected component of $i$ is in $\Delta$. Otherwise, the vertices in this component
form a block or coupled block of $\Lambda$.

If a component contains a vertex $i$ incident
with an edge of type (c) above, then $t_i=0$,
and so $t_j=0$, for all $v\in X(\Delta,\Lambda)$
and all the vertices $j$ in the component. We thus have all the vertices of the component in $\Delta$.
Similarly if the connected component contains an odd circuit, for then $t_i=-t_i$
for each vertex $i$ in the circuit, and all the vertices are in $\Delta$ too. 
On the other hand,
suppose the component has no (c) edges and all circuits even. Label a vertex by $1$, and propagate
the labelling through the component by giving vertices joined by (a) edges the same label and
vertices joined by (b) edges labels that are negatives of each other. The absence of odd circuits
means this labelling can be carried out consistently. Label the remaining vertices of $\Gamma_S$
by $0$, to give an $v\in X(\Delta,\Lambda)$ with $t_i\not=0$ for $i$ some vertex of
our component, and so the component gives a block or coupled block.

We conclude that the vertices of a component of $\Gamma_S$ lie in $\Delta$ exactly when
the component has a (c) edge or contains an odd circuit. 

We claim that $S$ is independent exactly when each component of $\Gamma_S$ has one of
the forms
\begin{description}
\item[(B1).] a tree of (a) and (b) type edges together with at most one (c) type edge; or
\item[(B2).] contains a unique odd circuit, no (c) type edges, and removing one (hence any) edge
of the circuit gives a tree.
\end{description}
For, if the component contains no (c) edges and no odd circuits, then its vertices contribute
a block or coupled block to $\Lambda$, and by (\ref{eq:8}),
its edges are independent exactly when
there are $\sum(|\Lambda_{i1}|+|\Lambda_{i2}|-1)+\sum(|\Lambda_i|-1)$ of them;
in other words, when the number of edges is one less than the number of vertices.  
Thus we have a tree of (a) and (b) edges. 

If the component contains a (c)
edge then its vertices are in $\Delta$, and by (\ref{eq:8})
its edges are independent when there are the same number of them as there are vertices.
Removing the (c) edge gives a connected graph with number of edges one less than the number 
of vertices, hence a tree. The original component was thus a tree of (a) and (b) edges
with a single (c) edge.

Finally, if the component contains an odd circuit, then for the edges to be independent
it cannot have any (c) edges by the previous paragraph. Again the vertices are in $\Delta$
and so for independence the numbers of edges and vertices must be the same. Removing an edge from 
the circuit must give a tree as in the previous paragraph. In particular, the circuit is unique.

Now to the minimally dependent sets.
A \emph{branch vertex\/} of a tree of (a) and (b) edges is a vertex incident with at least three edges.
A \emph{line\/} is a tree of (a) and (b) edges containing at least one edge
and no branch vertices. It 
contains exactly two vertices (its \emph{ends\/}) incident with $<2$ edges.

\begin{proposition}
\label{section:idempotents:result375}
A set $S$ of atoms in $\HH(B_n)$ is minimally dependent precisely when $\Gamma_S$
has one of the forms:
\begin{enumerate}
\item an even circuit; or
\item an odd circuit wih a single (c) edge,
or two odd circuits intersecting only in a single vertex; or
\item a line, each end of which is incident with either a (c) edge or
an odd circuit intersecting the line only in this end vertex.
\end{enumerate}
\end{proposition}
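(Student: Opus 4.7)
The plan is to apply the characterization of independent atoms established just before the proposition: $S$ is independent iff each connected component of $\Gamma_S$ has the form \emph{(B1)} or \emph{(B2)}. Both directions of the equivalence will come from edge-count bookkeeping combined with a case analysis on the structural type of the unique ``active'' component of $\Gamma_S$.

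First I reduce to a single non-trivial component. Because the hyperplane equations associated with atoms in distinct components of $\Gamma_S$ involve disjoint coordinates, the rank of $\bigvee S$ is the sum of the per-component ranks; consequently, if $\Gamma_S$ had two dependent components, or an independent component containing an edge, then $S\setminus\{s\}$ would remain dependent for some $s$, contradicting minimality. So I may assume $\Gamma_S$ has a unique non-trivial component $C$, which is itself minimally dependent. Writing $v=|V(C)|$ and $e=|E(C)|$, the rank formula (\ref{eq:8}) gives three regimes: if $C$ has no (c)-edge and no odd circuit then $\rk =v-1$; otherwise $\rk =v$. Minimal dependence forces $e=v$ in the first regime and $e=v+1$ in the other two. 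A pendant-deletion argument also shows $C$ has no ``hanging trees'': every vertex has degree $\geq 2$, counting a (c)-loop as contributing two to its vertex's degree.

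The core of the proof is now a case analysis. When $c=0$ and no odd circuit is present, $e=v$ with minimum degree $\geq 2$ forces $C$ to be a single even circuit, giving \emph{(1)}. When $c=0$ and an odd circuit is present, $e=v+1$ with cyclomatic number two and no pendants forces $C$ to be either a figure-eight (two cycles at a vertex), a theta-graph (two vertices joined by three internally disjoint paths), or a handcuff (two cycles joined by a proper path); parity bookkeeping on edge deletions — each deletion must leave a unique odd circuit in the surviving graph, and bridge deletions must split odd circuits across the two resulting components — forces both cycles to be odd and rules out the theta graph outright, yielding \emph{(2b)} and the odd-odd subcase of \emph{(3)}. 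When $c\geq 1$, deleting a (c)-edge must produce a component of form \emph{(B1)}, forcing $c\leq 2$; for $c=1$ a bridge-splitting argument forces $C$ to be either an odd circuit carrying a single (c)-loop (giving \emph{(2a)}) or an odd circuit joined to the (c)-vertex by a path carrying no extra branches (giving the mixed subcase of \emph{(3)}); for $c=2$, every bridge removal must separate the two (c)-vertices, which forces the (a)/(b)-subgraph to be a line with the two (c)-edges at its two ends, giving the remaining subcase of \emph{(3)}.

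The main obstacle I anticipate is the theta-graph step in the second regime: unlike the other subcases, no single edge-deletion exhibits a parity obstruction; one must instead combine the three deletions to obtain the contradiction that $p+q$, $p+r$, $q+r$ cannot all be odd. A secondary technical nuisance is the bridge-splitting bookkeeping when $c\geq 1$, where the key lemma is that a (c)-loop and an odd circuit cannot co-exist on the same side of a bridge in a minimally dependent component, because the resulting side would still have $e_1=v_1+1$ and all its vertices in $\Delta$, hence remain dependent.
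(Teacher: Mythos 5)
Your proposal is correct and takes essentially the same route as the paper: reduce to a connected graph, case-split on the number of (c) edges, use the (B1)/(B2) independence characterization to constrain what each single-edge deletion can leave behind, and kill the remaining bad configuration in the no-(c)-edge case by a parity argument. The only organizational difference is that you reach the figure-eight/theta/handcuff trichotomy via an explicit edge count (cyclomatic number two plus minimum degree two), whereas the paper extracts two odd circuits $C_1,C_2$ from two different deletions and counts their common vertices -- but the decisive step, the observation that the three path-parities of a theta graph cannot all yield odd circuits, is the same in both.
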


Examples of the third kind are given in Figure \ref{fig0}. 

\begin{figure}
  \centering
\begin{pspicture}(0,0)(14.3,3)
\rput(8,0.5){
\pscircle(0,1){.15}\pscircle(1,0){.15}\pscircle(1,2){.15}\pscircle(2,1){.15}
\pscircle(3.5,1){.15}\pscircle(5,1){.15}
\psline[linewidth=.3mm,doubleline=true]{-}(0.106,1.106)(0.894,1.894)
\psline[linewidth=.3mm,doubleline=false]{-}(0.106,.894)(0.894,.106)
\psline[linewidth=.3mm,doubleline=false]{-}(1.106,.106)(1.894,.894)
\psline[linewidth=.3mm,doubleline=false]{-}(1.106,1.894)(1.894,1.106)
\psline[linewidth=.3mm,doubleline=false]{-}(2.15,1)(3.35,1)
\psline[linewidth=.3mm,doubleline=true]{-}(3.65,1)(4.85,1)
\rput(4,3){\rput{-90}(0,0){
\psbezier[showpoints=false,linewidth=.3mm]{-}(1.9,1.1)(1.3,2)(2.7,2)(2.1,1.1)}}
}
\rput(1,1){
\pscircle(0,.5){.15}\pscircle(1.5,.5){.15}\pscircle(3,.5){.15}
\pscircle(4.5,.5){.15}
\psline[linewidth=.3mm,doubleline=false]{-}(.15,.5)(1.35,.5)
\psline[linewidth=.3mm,doubleline=true]{-}(1.65,.5)(2.85,.5)
\psline[linewidth=.3mm,doubleline=false]{-}(3.15,.5)(4.35,.5)
\rput(1,-1.5){\rput{90}(0,0){
\psbezier[showpoints=false,linewidth=.3mm]{-}(1.9,1.1)(1.3,2)(2.7,2)(2.1,1.1)}}
\rput(3.5,2.5){\rput{-90}(0,0){
\psbezier[showpoints=false,linewidth=.3mm]{-}(1.9,1.1)(1.3,2)(2.7,2)(2.1,1.1)}}
}
\end{pspicture}
  \caption{Minimally dependent sets of atoms in the Coxeter arrangement $\HH(B)$: 
a line with (c) edges at each end (left) and a line with a (c) edge at one end and 
an odd circuit at the other intersecting the line only in this end vertex (right).}
  \label{fig0}
\end{figure}
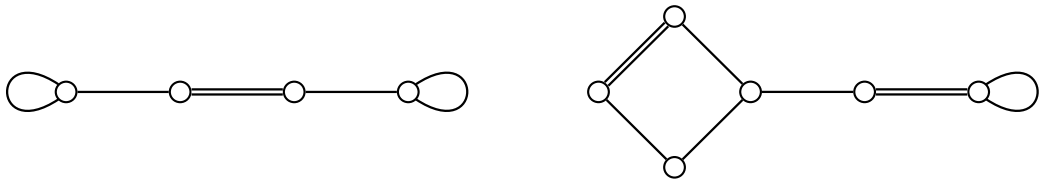

\begin{proof}
It is easy to see that for $S$ to be minimally dependent 
the graph $\Gamma_S$ must be connected.
We proceed by considering the number of 
type (c) edges in $\Gamma_S$. Firstly, there cannot be three or more such edges, for omitting one
would give a connected graph with at least two type (c) edges, whereas
the independent graphs in (B1) and
(B2) have at most one such edge.
If $\Gamma_S$ has two (c) edges then deleting one, $e$ say, gives $\Gamma_S\setminus\{e\}$
a graph of type (B1), hence $\Gamma_S$ is a tree with two type (c) edges attached. 
If this tree has a branch vertex, then there is a branch of the tree incident with no
type (c) edges. Deleting any edges of this branch gives a connected independent graph with 
two (c) edges attached, which cannot be. Thus $\Gamma_S$ is a line with two (c) edges
attached (it cannot be a single vertex with two (c) edges
attached). If an end vertex of the line 
has no (c) edge attached, then deleting the edge incident with this end
also gives a connected independent graph with two (c) edges; thus, each end vertex is attached
to a (c) edge and we have a line with a (c) edge at each end. This is clearly minimally dependent.

Now suppose $\Gamma_S$ contains a single (c) edge $e$, so that 
$\Gamma_S\setminus\{e\}$ has type (B1) with no (c) edges, or is of type (B2). In the first case
we would have $\Gamma_S$ a tree with a single (c) edge, which is independent, so we are
left with the possibility that $\Gamma_S\setminus\{e\}$ is of type (B2). Choose a line or single vertex
connecting the vertex incident with the (c) edge to a vertex of the odd circuit. 
This line can be chosen so as to intersect
the odd circuit in just a single vertex. Suppose $e'$ is an edge not contained in the odd circuit,
or the line, and is not the (c) edge. Then $\Gamma_S\setminus\{e'\}$ is an independent graph, one
component of which contains both an odd circuit and a (c) edge, which is a contradiction. Thus no
such $e'$ can exist, and $\Gamma_S$ \emph{is\/} a single vertex incident with an odd circuit and
a (c) edge as in part 2 of the Proposition,
or a line incident with an odd circuit and a (c) edge as in part 3.
In any case, these are minimally dependent.

Finally, we have the case where $\Gamma_S$ contains no (c) edges. Let $e$ be an edge of $\Gamma_S$,
so that $\Gamma_S\setminus\{e\}$ is a tree or of type (B2). In the
former, arguments like those above give that $\Gamma_S$ is just an even
circuit, which is minimally dependent.

This leaves the possibility that $\Gamma_S\setminus\{e\}$ is of type (B2), and in particular contains an
odd circuit $C_1$. Let $e'$ be an edge of this circuit.
As $\Gamma_S\setminus\{e,e'\}$ is a tree, we have that $\Gamma_S\setminus\{e'\}$
is not a tree but nevertheless independent, hence of type (B2) as well. Thus $\Gamma_S\setminus\{e'\}$
contains an odd circuit $C_2$, and as $e'\in C_1$ and $e'\not\in C_2$, these odd circuits
are distinct. 

We cosider the number of vertices $C_1$ and $C_2$ have in common. Suppose 
first that they have at least two common vertices. Each of $C_1$ and
$C_2$ gives two distinct paths connecting
these common vertices together, and the resulting four paths may or may not be distinct. If they
are distinct then removing an edge from one path gives an independent graph with two distinct
circuits--a contradiction. If these four paths are not distinct then there is a common path connecting
our two vertices as well as two other distinct paths connecting them. 
The common path has either an even or an odd
number of type (b) edges, and the other two paths an odd or even
number respectively, in order to make the
circuits odd overall. In either case, jettisoning the common path gives an independent graph containing
an even circuit--a contradiction again. Thus $C_1$ and $C_2$ can have at most one common vertex.

One common vertex and an edge $e$ not in either $C_1$ or $C_2$ would mean 
$\Gamma_S\setminus\{e\}$ is an independent graph with two circuits. Thus $\Gamma_S$ is a 
single vertex incident with two odd circuits $C_1,C_2$ intersecting only in this vertex. If no common vertices,
choose a line connecting $C_1,C_2$ and intersecting each in a single vertex. Again
we cannot have any other edges not in $C_1,C_2$ or this line, so $\Gamma_S$ is a line, each end 
vertex of which is incident with an odd circuit intersecting the line only in this end vertex.
\qed
\end{proof}  
\end{example}

\begin{example}[$\HH(D_n)$]
This is very similar to the previous example, so we will be briefer.
Let $V$ be as before and $\AA$ the hyperplanes with equations
$x_i\pm x_j=0$ for all $i\not= j$; equivalently, if $\Phi$ is the type
$D_n$ root system of Table \ref{table:roots1} then $\AA$ consists of the hyperplanes
$(v_i\pm v_j)^\perp$, with $W(\Phi)$ acting on $V$ by even signed permutations
of the $v_i$ (see also the end of \S\ref{section:renner:classical1}). 
In particular we have a sub-arrangement of $\HH(B_n)$. 
If $\TT^\circ\subset\TT$ consists of those $(\Delta,\Lambda)$ with
$|\Delta|\not=1$ then the isomorphism $\HH(B_n)\rightarrow\TT$
restricts to an isomorphism $\HH(D_n)\rightarrow\TT^\circ$. 
We have the same expression (\ref{eq:8}) for $\rk(\Delta,\Lambda)$
and the same conditions for an $v$ to lie in $X(\Delta,\Lambda)$ as in
the previous example. 

If $S$ is set of atoms in $\HH(D_n)$, let $\Gamma_S$ be the graph with vertex set $\{1,\ldots,n\}$
and edges of types (a) and (b) above. The arguments from here on are what you get
if you drop the (c) type edges from all the arguments in the previous section. Thus, the vertices
of a component of $\Gamma_S$ lie in $\Delta$ exactly when the component contains an
odd circuit. It follows that $S$ is independent when each component of $\Gamma_S$ is either
\begin{description}
\item[(D1).] a tree of (a) and (b) type edges; or
\item[(D2).] contains a unique odd circuit, removing one (hence any) edge
of which gives a tree.
\end{description}
The equivalent version of Proposition
\ref{section:idempotents:result375} gives $S$ minimally dependent when
$\Gamma_S$ is one of the forms:
\begin{enumerate}
\item an even circuit; or
\item two odd circuits intersecting only in a single vertex; or 
\item a line, each end of which is incident with an odd  circuit
  intersecting $\Gamma_S$ only in this end vertex.
\end{enumerate}  
\end{example}

We are now ready to give our presentations for the three classical reflection arrangements.
In each case we have replaced the \emph{(Idem3a)} family of relations given in Theorem 
\ref{section:idempotents:result350}
by a smaller set.

\paragraph{The intersection lattice $\HH(A_{n-1})$ or partition lattice $\Pi(n)$:}
has generators
$$
\begin{pspicture}(0,0)(14,1)
\rput(4.5,0){
\rput(0,0.2){
\pscircle(0,.5){.15}\pscircle(1.5,.5){.15}
\rput(0,.1){$i$}\rput(1.5,.1){$j$}
\psline[linewidth=.3mm]{-}(.15,.5)(1.35,.5)
}
\rput(3.5,.5){$(1\leq i\not=j\leq n)$}
}
\end{pspicture}
$$
and relations $(A0)$: the generators are commuting idempotents, and 
the relation $(A1)$ of 
Figure \ref{fig:typeA:arragement:relations}, which holds for all triples $\{i,j,k\}$.
See also \cite{Fitzgerald03}*{Theorem 2}. 

\begin{figure}
  \centering
\begin{pspicture}(0,0)(14,2)
\rput(0,0.5){
\rput(3,0.5){$(A1)$}
\rput(4.5,0.5){\BoxedEPSF{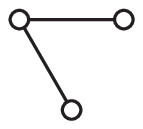 scaled 900}}
\rput(5.75,0.5){$=$}
\rput(7,0.5){\BoxedEPSF{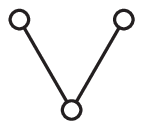 scaled 900}}
\rput(8.25,0.5){$=$}
\rput(9.5,0.5){\BoxedEPSF{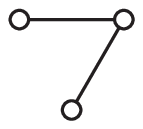 scaled 900}}
}
\end{pspicture}  
  \caption{Relations for the intersection lattice $\HH(A_n)$.}
  \label{fig:typeA:arragement:relations}
\end{figure}

We have pushed the graphical technique to its logical conclusion here. 
When a word in the generators 
is expressed graphically as in Figure
\ref{fig:typeA:arragement:relations}
it is not possible to tell the order in which the 
generators appear, but this doesn't matter as they commute. A pleasant consequence
of the commuting generators is that relations like $(A1)$ can be applied to a fragment of a
graph while leaving the rest untouched.
Observe that multiplying together any two of the graphs in Figure \ref{fig:typeA:arragement:relations}
gives an \emph{(Idem3b)} relation of the form: ``a triangle equals a triangle minus an edge''.

To see this presentation, the
\emph{(Idem3a)} relations for $\HH(A_{n-1})$ 
are
of the form $\Gamma_S=\Gamma_S\setminus\{e\}$ where $\Gamma_S$ is a circuit
and $e$ some edge of it.
Given such a circuit, repeated applications of the relations $(A1)$,
as in say, 
$$
\begin{pspicture}(0,0)(14.3,2)
\rput(.95,0){
\rput(2,1){\BoxedEPSF{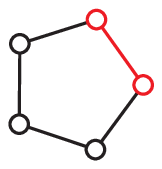 scaled 900}}
\rput(4.65,1){\BoxedEPSF{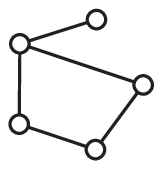 scaled 900}}
\rput(7.3,1){\BoxedEPSF{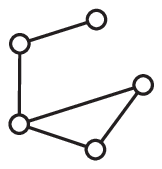 scaled 900}}
\rput(9.95,1){\BoxedEPSF{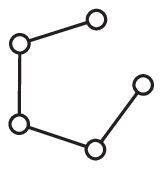 scaled 900}}
\rput(3.3,1){$=$}\rput(6.05,1){$=$}\rput(8.65,1){$=$}
\rput(2.6,1.4){${\red e}$}
\rput(0.6,1){$\Gamma_S=$}\rput(11.75,1){$=\Gamma_S\setminus\{e\}$}
}
\end{pspicture}
$$
allow us to move one end of $e$ anticlockwise around the circuit until
we have a triangle, from which the edge can then be removed
(using the ``triangle equals a triangle minus an edge'' relation
mentioned above). Thus the \emph{(Idem3a)}
relations follow from the relations $(A0)$-$(A1)$.

\paragraph{The intersection lattice $\HH(B_n)$:} has generators 
$$
\begin{pspicture}(0,0)(14,1.5)
\rput(0,-1){
\rput(2.6,1.5){
\pscircle(0,.5){.15}\pscircle(1.5,.5){.15}
\rput(0,.1){$i$}\rput(1.5,.1){$j$}
\psline[linewidth=.3mm]{-}(.15,.5)(1.35,.5)
}
\rput(5,1.5){
\pscircle(0,.5){.15}\pscircle(1.5,.5){.15}
\rput(0,.1){$i$}\rput(1.5,.1){$j$}
\psline[doubleline=true,linewidth=.3mm]{-}(.15,.5)(1.35,.5)
\rput(3.5,.5){$(1\leq i\not=j\leq n)$}
}
\rput(8.75,1){
\pscircle(2,1){.15}
\rput(2,.6){$i$}
\psbezier[showpoints=false,linewidth=.3mm]{-}(1.9,1.1)(1.3,2)(2.7,2)(2.1,1.1)
\rput(3.75,1){$(1\leq i\leq n)$}
}
}
\end{pspicture}
$$
and relations $(B0)$: the generators are commuting idempotents
and the $(B1)$-$(B4)$ of
Figure \ref{fig:typeB:arragement:relations}.
The relations $(B1)$, $(B2)$ and $(B4)$ hold for all triples
$\{i,j,k\}$ and $(B3)$ holds for all pairs $\{i,j\}$.

That these relations hold in $\HH(B_n)$
follows by checking that the corresponding subspaces are the same, i.e.: if $\Gamma_S,\Gamma_{S'}$
are two graphs differing only by applying one of these relations to some fragment, then 
$\bigvee S=\bigvee S'$ in the intersection lattice. 
For example, let the vertices in the relations $(B4)$
be labelled anti-clockwise as $i,j$ and $k$. 
If $v=\sum t_iv_i\in\bigvee S$ (the left hand side)
then we have $t_i=t_j=t_k$ and 
$t_i=-t_k$, hence $t_i=t_j=t_k=0$; similarly for
$v\in\bigvee S'$. As all the other $t$'s are the same we get our equality. 

The presentation follows by showing that if $\Gamma_S$ is one of the graphs in 
Proposition \ref{section:idempotents:result375}, 
and $e$ is some edge of it, then the \emph{(Idem3b)} relation $\Gamma_S=\Gamma_S\setminus\{e\}$
follows from $(B0)$-$(B4)$. 
We start with a series of relations that can be deduced 
from (B0)-(B4): 
\begin{description}
\item[(i).] From the relations $(B1)$ and the argument used in the $\HH(A_{n-1})$
case, if $\Gamma$ is a circuit of type (a) edges, then we have $\Gamma=\Gamma\setminus\{e\}$
for any edge $e$. 
\item[(ii).] If $\Gamma$ is a circuit of type (a) and (b) edges then equality of the first and last
fragment in the relations $(B2)$ 
allows us to move the (b) edges across the (a) edges to give a circuit composed 
entirely of (b) edges, as for example in:
$$
\begin{pspicture}(0,0)(14.3,2)
\rput(3,0){
\rput(2.4,1){\BoxedEPSF{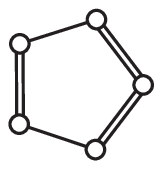 scaled 900}}
\rput(4.85,1){\BoxedEPSF{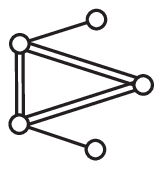 scaled 900}}
\rput(3.6,1){$=$}
\rput(.9,1){$\Gamma=$}\rput(6.4,1){$=\Gamma'$}
}
\end{pspicture}
$$
\item[(iii).] A fragment of $2m$ consecutive (b) edges can, by the relations $(B2)$, be
replaced by a connected fragment containing $m$ consecutive (a) edges:
$$
\begin{pspicture}(0,0)(14.3,1)
\rput(3.5,.5){\BoxedEPSF{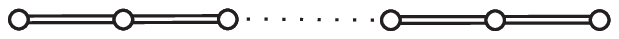 scaled 900}}
\rput(10.5,.6){\BoxedEPSF{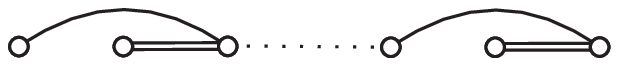 scaled 900}}
\rput(0,0.5){$\Gamma=$}\rput(14,0.5){$=\Gamma'$}\rput(7,0.5){$=$}
\end{pspicture}
$$
\item[(iv).] A fragment of consecutive (a) edges can be augmented:
$$
\begin{pspicture}(0,0)(14.3,1)
\rput(3.5,0.5){\BoxedEPSF{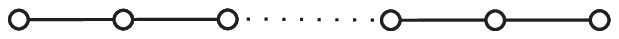 scaled 900}}
\rput(10.5,0.5){\BoxedEPSF{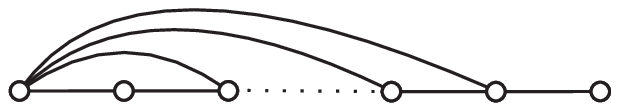 scaled 900}}
\rput(0,0.5){$\Gamma=$}\rput(14,.5){$=\Gamma'$}\rput(7,0.5){$=$}
\end{pspicture}
$$
using the \emph{(Idem3a)} ``a triangle equals a triangle minus an edge'' relations that follow from 
$(B1)$.
\item[(v).] Let $\Gamma$ be a connected graph containing a (c) edge, and $\Gamma'$
a graph with the same vertex set, \emph{each\/} of which is incident with a (c) edge,
and having no other edges. Then repeated applications of the relations $(B3)$ give
$\Gamma=\Gamma'$. 
\item[(vi).]  Finally, let $\Gamma$ contain an odd circuit. Applying (ii) gives the fragment 
below left:
$$
\begin{pspicture}(0,0)(14.3,1)
\rput(3.7,0.5){\BoxedEPSF{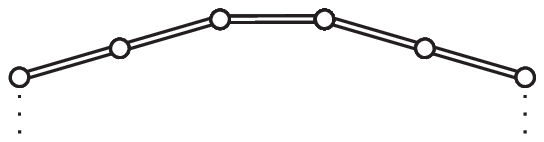 scaled 900}}
\rput(10.3,0.5){\BoxedEPSF{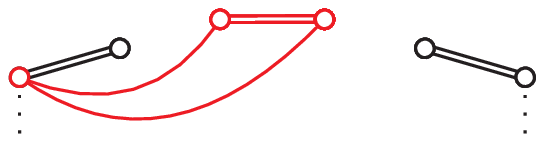 scaled 900}}
\rput(7,0.5){$=$}
\end{pspicture}
$$
and (iii)-(iv) transform this into the form on the right, containing the triangle in red. 
Note that the connectedness of $\Gamma$ is not affected by these moves. The triangle
fragment is the left hand side of the relation (B4), applying which gives a graph
with each component incident with a (c) edge. Thus, if $\Gamma'$ is a graph
on the same vertex set as $\Gamma$, each of which is incident with a (c) edge, and having
no other edges, then by (v) we have $\Gamma=\Gamma'$.
\end{description}

\begin{figure}
  \centering
\begin{pspicture}(0,0)(14,3)
\rput(-4,1.75){
\rput(4.75,0.5){\BoxedEPSF{ref.monoids2a.fig19.eps scaled 900}}
\rput(5.75,0.5){$=$}
\rput(7,0.5){\BoxedEPSF{ref.monoids2a.fig20.eps scaled 900}}
\rput(8.25,0.5){$=$}
\rput(9.25,0.5){\BoxedEPSF{ref.monoids2a.fig21.eps scaled 900}}
}
\rput(6.3,2.25){$(B1)$}
\rput(4,1.75){
\rput(4.75,0.5){\BoxedEPSF{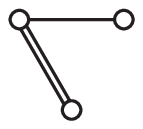 scaled 900}}
\rput(5.75,0.5){$=$}
\rput(7,0.5){\BoxedEPSF{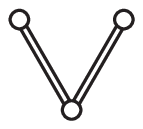 scaled 900}}
\rput(8.25,0.5){$=$}
\rput(9.25,0.5){\BoxedEPSF{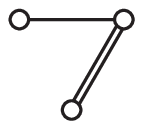 scaled 900}}
}
\rput(7.7,2.25){$(B2)$}
\rput(-4,0.25){
\rput(4.75,0.5){\BoxedEPSF{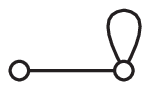 scaled 900}}
\rput(5.75,0.5){$=$}
\rput(7,0.5){\BoxedEPSF{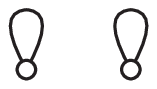 scaled 900}}
\rput(8.25,0.5){$=$}
\rput(9.25,0.5){\BoxedEPSF{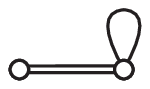 scaled 900}}
}
\rput(6.3,0.75){$(B3)$}
\rput(5,0.25){
\rput(4.5,0.5){\BoxedEPSF{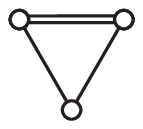 scaled 900}}
\rput(5.75,0.5){$=$}
\rput(7,0.5){\BoxedEPSF{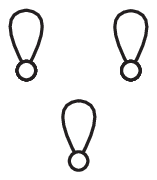 scaled 900}}
}
\rput(7.7,0.75){$(B4)$}
\end{pspicture}  
  \caption{Relations for the intersection lattice $\HH(B_n)$.}
\label{fig:typeB:arragement:relations}
\end{figure}

Now let $\Gamma_S$ be a graph of the form given in part 3 of Proposition 
\ref{section:idempotents:result375} and $e\in\Gamma_S$ some edge. Then every component of
both $\Gamma$ and $\Gamma_S\setminus\{e\}$ contains an odd circuit and/or a (c) edge,
hence by (v)-(vi) there is a $\Gamma'$ such that $\Gamma=\Gamma'=\Gamma\setminus\{e\}$
can be deduced from $(B1)$-$(B4)$. Similarly for $\Gamma_S$ of the form given in part
2 of Proposition \ref{section:idempotents:result375}. 

Finally, let $\Gamma_S$ be an even circuit as in part 1 of Proposition \ref{section:idempotents:result375}
and $e$ a type (b) edge in this circuit (if no such $e$ exists then we are done by (i)). 
Applying (ii) gives the fragment below left, and this equals the fragment 
at the right by (iii):
$$
\begin{pspicture}(0,0)(14.3,1)
\rput(3,0.5){\BoxedEPSF{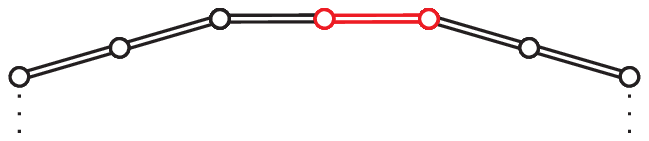 scaled 900}}
\rput(11,0.5){\BoxedEPSF{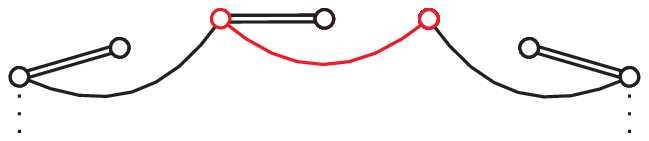 scaled 900}}
\rput(3.5,.6){${\red e}$}\rput(11,.3){${\red e'}$}
\rput(7,0.5){$=$}
\end{pspicture}
$$
Applying (i), we can remove the edge $e'$ and then run the process backwards to get 
$\Gamma_S\setminus\{e\}$.
If instead $e$ is a type (a) edge then the argument is similar.

\paragraph{The intersection lattice $\HH(D_n)$:} has generators
$$
\begin{pspicture}(0,0)(14,1)
\rput(0,-1.5){
\rput(2.6,1.5){
\pscircle(0,.5){.15}\pscircle(1.5,.5){.15}
\rput(0,.1){$i$}\rput(1.5,.1){$j$}
\psline[linewidth=.3mm]{-}(.15,.5)(1.35,.5)
}
\rput(5,1.5){
\pscircle(0,.5){.15}\pscircle(1.5,.5){.15}
\rput(0,.1){$i$}\rput(1.5,.1){$j$}
\psline[doubleline=true,linewidth=.3mm]{-}(.15,.5)(1.35,.5)
\rput(3.5,.5){$(1\leq i\not=j\leq n)$}
}
}
\end{pspicture}
$$
and relations $(D0)$: the generators are commuting idempotents,
$(B1)$ and $(B2)$ of Figure \ref{fig:typeB:arragement:relations},
and $(D1)$-$(D3)$ of
Figure \ref{fig:typeD:arragement:relations}.
The relations $(D1)$ and $(D3)$ in Figure \ref{fig:typeD:arragement:relations} hold for all triples
$\{i,j,k\}$ and the relations $(D2)$ for all $4$-tuples $\{i,j,k,\ell\}$.

The proof of the presentation is very similar to the $\HH(B_n)$ case: check first that the relations hold in 
$\HH(D_n)$ and then show that if $\Gamma_S$ is one of the minimally
dependent graphs listed for $\HH(D_n)$, then
$\Gamma_S=\Gamma_S\setminus\{e\}$ for any edge $e$. 

Note that the relations
(i)-(iv) of the type $\HH(B_n)$ case hold here as well, as they use only the relations
$(B1)$-$(B2)$. Relation (v) in $\HH(B_n)$ is replaced by ($\text{v}'$): if $\Gamma$ connected
contains a pair of vertices that are connected by both and (a) type edge an a (b) type
edge, then repeated applications of $(D3)$ gives $\Gamma=\Gamma'$, where 
$\Gamma'$ has the same vertex set as $\Gamma$, but every edge 
(type (a) or type (b)) of $\Gamma$
has been replaced by a pair consisting of an (a) type edge and a (b) type edge.

The construction in (vi) holds up to the appearance of the red 
triangle that appears on the lefthand side of $(D1)$ above. This can be replaced by the righthand
side of $(D1)$, and repeated application of $(D3)$ gives that if $\Gamma$ connected has an odd circuit then
$\Gamma=\Gamma'$, where $\Gamma'$ is just $\Gamma$ with every edge replaced by 
a pair consisting of an (a) and a (b) edge.

Thus, if $\Gamma_S$ contains two odd circuits joined by a line,
then every component of $\Gamma_S\setminus\{e\}$
does too.
We get $\Gamma=\Gamma'$ and $\Gamma\setminus\{e\}=\Gamma^{''}$ as in the
previous paragraph. Finally, $(D1)$ above gives $\Gamma'=\Gamma^{''}$.
If $\Gamma_S$ is an
even circuit the argument is identical to the $\HH(B_n)$ case.

\section{A presentation for reflection monoids}
\label{section:presentation}

We now return to the specifics of reflection monoids and give a
presentation (Theorem \ref{presentations:result1000} below) for those
reflection monoids $M(W,\cS)$ where $W\subset GL(V)$ is a finite reflection group
and $\cS$ a graded atomic system of subspaces of $V$ for $W$. We also give the
analogous presentation when $\cS$ is a system of subsets of some set $E$.
The main technical tool is the presentation for factorizable
inverse monoids found in \cite{Easdown05}.

\begin{figure}
  \centering
\begin{pspicture}(0,0)(14,3)
\rput(-4,1.75){
\rput(4.75,0.5){\BoxedEPSF{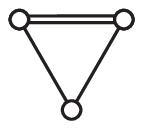 scaled 900}}
\rput(5.75,0.5){$=$}
\rput(7,0.5){\BoxedEPSF{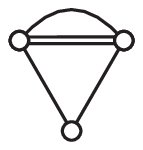 scaled 900}}
}
\rput(4.3,2.25){$(D1)$}
\rput(4,1.75){
\rput(4,0.5){\BoxedEPSF{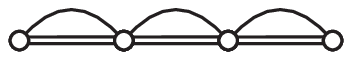 scaled 900}}
\rput(6.05,0.5){$=$}
\rput(8,0.5){\BoxedEPSF{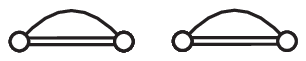 scaled 900}}
}
\rput(5.7,2.25){$(D2)$}
\rput(3,0){
\rput(-4,0.25){
\rput(4.75,0.5){\BoxedEPSF{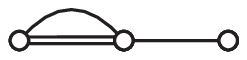 scaled 900}}
\rput(6.3,0.5){$=$}
\rput(7.75,0.5){\BoxedEPSF{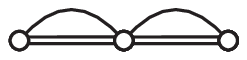 scaled 900}}
\rput(9.3,0.5){$=$}
\rput(10.75,0.5){\BoxedEPSF{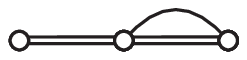 scaled 900}}
}
\rput(8.4,0.75){$(D3)$}
}
\end{pspicture}  
  \caption{Relations for the intersection lattice $\HH(D_n)$: relations $(B1)$ and $(B2)$ from 
Figure \ref{fig:typeB:arragement:relations} together with relations $(D1)$-$(D3)$ above.}
\label{fig:typeD:arragement:relations}
\end{figure}

Let $V$ be a finite dimensional real vector space and $W\subset GL(V)$
a finite real reflection 
group with generating reflections $S$ and 
$T=W^{-1}SW$ the full set of reflections. 
Let $\cA=\{H_t\subset V\,|\,t\in T\}$ be the reflecting hyperplanes of $W$.
Suppose also that:
\begin{description}
\item[(P1).] $\cS$ is a finite system of subspaces in $V$ for $W$, and that via $X\leq Y$
if and only if $X\supseteq Y$, the system is a graded (by $\rk X=\codim X:=\dim
V-\dim X$) atomic $\vee$-semilattice with atoms $A$.
We have $\rk\cS=\dim V-\dim\bigvee_\cS X$.
The $W$-action preserves the grading, and in particular we have $A
W=A$.
If $a_1,\ldots,a_k$ are distinct atoms let $O_k$ be a set of orbit
representatives for the $W$-action
$\{a_1,\ldots,a_k\}\stackrel{w}{\mapsto}\{a_1w,\ldots,a_kw\}$.
\item[(P2).] We use the Greek
  equivalents of Roman letters to indicate a fixed word
for an element in terms of generators. In particular, 
the reflection group $W$ has a presentation with generators the $s\in S$ and 
relations 
$(st)^{m_{st}}=1$ for $s,t\in S$, where 
$m_{st}=m_{ts}\in\Z^{\geq 1}\cup\{\infty\}$ with $m_{st}=1$ if and only if
$s=t$. For each $w\in W$ we fix a word
$\omega$ for $w$ in the reflections $s\in S$ 
(subject to $\ss=s$).
\item[(P3).] Now for the action of $W$ on $\cS$: 
For each $a\in A$ fix a representative atom $a'\in O_1$ and a $w\in W$
with $a=a'w$ subject to $w=1$ if $a\in O_1$. Now define 
the word $\aa$ to be $\omega^{-1}a'\omega$.
If $w$ is an arbitrary element of $W$ and $a\in A$ then by $\aa^\omega$ we mean the
word obtained in this way for $aw\in A$. Note that this is \emph{not\/} necessarily 
$\omega^{-1}a\omega$.
For $e\in\cS$, fix a join $e=\bigvee a_i\,(a_i\in A)$ and define $\ve:=\prod\aa_i$. 
\item[(P4).] Let $\{H_{s_1},\ldots,H_{s_\ell}\}$ be representatives for the $W$-action on $\cA$
with the $s_i\in S$. 
For example, drop the even labeled edges in the Coxeter symbol for $W$
and choose one $s$ from each component of the resulting graph.
For each $i=1\,\ldots,\ell$ consider the set of $X\in\cS$ with the
property that $H_{s_i}\supseteq X$. If this set is non-empty them form
the pairs $(e,s_i)$ for each $e\in\cS$ minimal in this set. Let
\emph{Iso\/} be the set of all such pairs.
\end{description}

With the notation established we have:

\begin{theorem}
\label{presentations:result1000}
Let $W\subset GL(V)$ be a finite real reflection group and $\cS$ a
graded atomic system of
subspaces for $W$. Then
the reflection monoid $M(W,\cS)$ has a presentation with
\begin{align*}
\text{generators:\hspace{1em}}&s\in S,a\in O_1.&&\\
\text{relations:\hspace{1em}}
&(st)^{m_{st}}=1,\,(s,t\in S),&&\text{(Units)}\\
&a^2=a,\,(a\in O_1),&&\text{(Idem1)}\\
&\aa_1 \aa_2=\aa_2 \aa_1,\,(\{a_1,a_2\}\in O_2),&&\text{(Idem2)}\\
&\aa_1\ldots\aa_{k-1}=\aa_1\ldots\aa_{k-1}\aa,\,(\{a_1,\ldots,a_{k-1},a\}\in O_{k})&&\\
&\hspace*{0.5cm}\text{ with }a_1,\ldots,a_{k-1}\,,(3\leq k\leq\rk\cS)\text{ independent and }a\leq\bigvee a_i,&&\text{(Idem3)}\\
&s\aa=\aa^{s}s,\,(s\in S,a\in A),&&\text{(RefIdem)}\\
&\ve s=\ve,\,(e,s)\in\mathit{Iso}.&&\text{(Iso)}\\
\end{align*}
\end{theorem}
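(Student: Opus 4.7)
The plan is to start from the presentation of a factorizable inverse monoid given in \cite{Easdown05} and then progressively cut down both generators and relations using the extra structure available here (the $W$-action by poset automorphisms on the graded atomic $\vee$-semilattice $\cS$, a Coxeter presentation for $W$, and Steinberg's theorem on isotropy groups). Concretely, Easdown's result says that the factorizable inverse monoid $M=EW$ has a presentation whose generators are those of $E$ (as a semigroup) together with those of $W$ (as a group), and whose relations are: (i) the defining relations of $E$; (ii) the defining relations of $W$; (iii) for each generator $a$ of $E$ and each generator $w$ of $W$, an action relation rewriting $wa$ as $(aw)w$; and (iv) for each generator $a$ of $E$ an isotropy relation $aw=a$ whenever $w$ fixes $a$.

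First I would plug in the Coxeter presentation for $W$ (giving the family \emph{(Units)}) and the presentation of Proposition~\ref{presentations:result300} for $\cS$, whose generators are \emph{all} atoms $a\in A$ and whose relations are the full families \emph{(Idem1)}--\emph{(Idem3)} over all independent subsets. At this point the action relations take the form $s\aa=\aa^{s}s$ with $s\in S$ and $a\in A$, which is exactly \emph{(RefIdem)}. The (RefIdem) relations can be used to move any generator $s$ past any word in atoms at the cost of replacing each atom $a$ by $as\in A$, so using the convention $(\text{P3})$ that the word $\aa$ for $a\in A\setminus O_1$ is defined to be $\omega^{-1}a'\omega$ (where $a=a'w$ and $a'\in O_1$), each generator $a\in A$ is expressible as a word in $O_1\cup S$, allowing us to eliminate the redundant atom generators $A\setminus O_1$. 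Then, using (RefIdem) again, every \emph{(Idem2)} and \emph{(Idem3)} relation can be conjugated by a suitable word in $S$ into an \emph{(Idem2)} or \emph{(Idem3)} relation whose left hand side involves a set of atoms in one of the chosen orbit representative sets $O_k$; this collapses those families to the ones listed.

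The hard part is showing that the full set of isotropy relations (one per pair $(e,w)$ with $ew=e$) is a consequence of the small family \emph{(Iso)}. Suppose $e\in\cS$ and $w\in W_e$, the isotropy subgroup. By Steinberg's theorem \cite[Theorem 1.5]{Steinberg64}, $W_e$ is generated by those reflections $t\in T$ with $H_t\supseteq e$; so it suffices to prove $\ve t=\ve$ for each such reflection. Writing $t=w^{-1}sw$ with $s\in S$ and using (RefIdem), this reduces to showing $\ve' s=\ve'$ for $e'=ew^{-1}$, so we may assume $t=s\in S$. Now $H_s\supseteq e'$, so by construction $e'$ lies above some element of $\cS$ minimal among those contained in $H_{s_i}$, where $s_i$ is the distinguished representative of the $W$-orbit of $H_s$; conjugating by an appropriate word in $S$ and using (RefIdem) once more reduces to the case where $s$ is actually $s_i$ and $e'$ lies above the minimal element $f$ appearing in \emph{Iso}. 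Finally, writing $\ve'=\vf\cdot(\text{extra atoms above }f)$ using \emph{(Idem2)}--\emph{(Idem3)}, and applying $\vf s_i=\vf$ from \emph{(Iso)} together with (RefIdem) to commute $s_i$ past the extra atoms, we recover $\ve' s=\ve'$.

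With generators and relations reduced as above, the resulting presentation is exactly the one in the theorem. It remains to check that the natural map from the abstract monoid defined by this presentation onto $M(W,\cS)$ is injective; this follows because every relation we have listed holds in $M(W,\cS)$, while the reductions above show that any word can be put in the Easdown normal form $\ve\omega$, which is uniquely determined by the element of $M(W,\cS)$ it represents (the idempotent and the unit in the factorization $g=\ve_X g$ being unique). \qed
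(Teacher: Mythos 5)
Your proposal follows essentially the same route as the paper: start from the Easdown--East--FitzGerald presentation of a factorizable inverse monoid, substitute the Coxeter presentation for $W$ and the presentation of Proposition~\ref{presentations:result300} for the idempotents (with Steinberg's theorem supplying reflection generators for the isotropy groups), then thin the idempotent and action relations to orbit representatives via (RefIdem) and reduce the isotropy relations to the minimal pairs in $\mathit{Iso}$ by absorbing the word for the minimal element into the word for $e$ and conjugating. This matches the paper's two thinning lemmas, and the only looseness --- ``commuting $s_i$ past the extra atoms'' really means applying (RefIdem), which changes the atoms, and then invoking the completeness of the idempotent presentation to identify the resulting word with $\ve'$ --- is easily repaired.
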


To prove Theorem \ref{presentations:result1000} we start with
a presentation for an arbitrary factorizable inverse monoid \cite{Easdown05}*{Theorem 6}, interpret
the various ingredients in the setting of a reflection monoid, and then  remove relations and generators.

Suppose then that $M$ is a factorizable
inverse monoid with units $W=W(M)$ and idempotents
$E=E(M)$. Let $\langle S\,|\,R_W\,\rangle$ and $\langle A\,|\,R_E\,\rangle$
be monoid presentations for $W$ and $E$. 
For $w\in W$, fix a word $\omega$ for $w$ in the
$s\in S$ and similarly for $e\in E$  fix a word $\varepsilon$ in the $a\in A$, with the
usual conventions applying when $w\in S$ and $e\in A$.
For $w\in W$ and $e\in E$ we have $w^{-1}ew\in E$, and by
$\ve^\omega$ we mean the chosen word for $w^{-1}ew$ in the
$a\in A$ (it turns out
that we will only have need for the notation $\ve^\omega$ in the case
that $w\in S$ and $e\in A$).
For each $e\in E$ let $W_e=\{w\in W\,|\,ew=e\}$ be the idempotent stabilizer, and $S_e\subseteq W_e$ 
a set of monoid generators for $W_e$. 

\begin{theorem}[\cite{Easdown05}*{Theorem 6}]
\label{presentations:result2000}
The factorizable inverse monoid $M$ has a presentation with,
\begin{align*}
\text{generators:\hspace{1em}}&s\in S,a\in A,&&\\
\text{relations:\hspace{1em}}
&R_W,R_E,&&\\
&sa=a^ss,\,(s\in S,a\in A),&&\\
&\ve\omega=\ve,\,(e\in E,w\in S_e).&&\\
\end{align*}
\end{theorem}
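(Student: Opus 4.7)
The plan is to follow the standard three-step recipe for verifying a monoid presentation: construct an obvious homomorphism from the presented monoid onto $M$, establish a normal form for elements of the presented monoid, and then use the structure of factorizable inverse monoids to show distinct normal forms represent distinct elements. Let $M'$ denote the monoid defined by the presentation. First I would check that each relation holds in $M$: $R_W$ and $R_E$ hold because they present the submonoids $W$ and $E$; the commutation relation $sa=a^s s$ holds because conjugating $a$ by $s$ and then multiplying by $s$ recovers $sa$; and the stabilizer relation $\varepsilon \omega=\varepsilon$ holds whenever $w\in S_e$ because $ew=e$ in $M$ by definition of $W_e$. This produces a homomorphism $\varphi:M'\to M$, which is surjective since $M=EW$ by factorizability.

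The next step is to produce a normal form in $M'$. Using $sa=a^s s$ repeatedly, any word in the generators of $M'$ can be rewritten by pushing every $s$-generator to the right of every $a$-generator, yielding an expression of the form $\varepsilon \omega$ with $\varepsilon$ a word in $A$ and $\omega$ a word in $S$. Then $R_E$ and $R_W$ let us replace $\varepsilon$ and $\omega$ by the distinguished words for the elements they represent. So every element of $M'$ can be written as $\varepsilon_e \omega_w$ for some chosen $e\in E$, $w\in W$.

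For injectivity, suppose $\varepsilon_1 \omega_1$ and $\varepsilon_2 \omega_2$ map to the same element of $M$. The key structural fact about factorizable inverse monoids is that $e_1 w_1=e_2 w_2$ forces $e_1=e_2=e$ and $v:=w_2 w_1^{-1}\in W_e$. Since $\langle A\mid R_E\rangle$ presents $E$, the words $\varepsilon_1,\varepsilon_2$ are interderivable using only $R_E$, so we may replace both by a common $\varepsilon$. Now factor $v=u_1\cdots u_k$ with $u_i\in S_e$, and let $\omega(u_i)$ be the chosen word for $u_i$. Since $\omega(u_1)\cdots\omega(u_k)\omega_1$ and $\omega_2$ both represent $w_2$, they are interderivable using $R_W$. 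Hence in $M'$,
\[
\varepsilon_2\omega_2 \;=\; \varepsilon\,\omega(u_1)\cdots\omega(u_k)\,\omega_1.
\]
Applying the stabilizer relation $\varepsilon\,\omega(u_1)=\varepsilon$, then $\varepsilon\,\omega(u_2)=\varepsilon$, and so on, collapses the middle block and leaves $\varepsilon\omega_1=\varepsilon_1\omega_1$. Therefore $\varphi$ is injective.

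The main obstacle will be the bookkeeping around what $R_E$ and $R_W$ actually deliver: one needs to know that $\langle A\mid R_E\rangle$ and $\langle S\mid R_W\rangle$ really do present $E$ and $W$ respectively in the sense that any two words for the same element are related by $R_E$ or $R_W$ alone, without interference from the mixed relations. Given that, the only genuinely nontrivial input is the factorizability characterization $ew=fu\iff e=f$ and $wu^{-1}\in W_e$, which is a standard property of factorizable inverse monoids; the rest is the controlled rewriting described above.
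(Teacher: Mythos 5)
The paper does not prove this statement: it is imported verbatim as Theorem~6 of Easdown, East and FitzGerald \cite{Easdown05}, so there is no in-paper proof to compare against. Your argument is correct and is essentially the standard (and the cited) proof: verify the relations in $M$ to get a surjection, use the commutation relations to reach the normal form $\ve\omega$, and reduce injectivity via the elementary uniqueness fact $e_1w_1=e_2w_2\iff e_1=e_2$ and $w_2w_1^{-1}\in W_{e_1}$ (which follows from $e=(ew)(ew)^{-1}$) to collapsing a product of stabilizer generators with the relations $\ve\omega=\ve$. Your closing worry about ``interference from the mixed relations'' is not an obstacle: that $\langle S\mid R_W\rangle$ and $\langle A\mid R_E\rangle$ present $W$ and $E$ is a hypothesis of the theorem, and you only ever apply $R_W$ (resp.\ $R_E$) to subwords consisting entirely of $S$-letters (resp.\ $A$-letters), which is always legitimate in a quotient of a free monoid. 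The only point worth stating carefully is the conjugation convention: the relation $sa=a^ss$ holds in $M$ when $a^s$ denotes (the chosen word for) $sas^{-1}$, which is how your verification reads it; in the reflection-monoid applications the generators are involutions, so the distinction disappears.
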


This theorem will give presentations for \emph{arbitrary\/} reflection
monoids. Here, $W$ is a reflection group, and in the real case
we take the standard Coxeter presentation for $W$. 
If moreover $W$ is finite, then by Steinberg's theorem the $W_e$ are
parabolic subgroups of $W$, so $S_e$ consists of reflections.
Although the presentation thus obtained looks much the same as that in
Theorem \ref{presentations:result2000}, it is, in fact, more precise
and economical. However, under the assumptions
(P1)-(P4) we can be much more explicit and as all the natural examples
satisfy these conditions, we concentrate on this case.

We now interpret the various ingredients in the presentation.
The $s$ of Theorem \ref{presentations:result2000} are the
generating reflections $s$ of the reflection group $W$.
Identifying $X\in\cS$ with the partial identity on $X$, the $A$ 
of Theorem \ref{presentations:result2000} are the
atomic subspaces $A$ of the system $\cS$.
If $s\in S$ and $a\in A$ then $as\in A$, 
so that $\aa^s$ is just another one of the symbols in $A$, and we write 
$a^s$ for this symbol.
If $X\in\cS$ then
$W_X$ is the isotropy group $W_X=\{w\in W\,|\,yw=y\text{ for all }y\in
X\}$, a group generated by reflections, 
and so we can take $S_X$ to consist of those $t\in T$ with $H_t\supseteq X$. 

As an intermediate step we thus have the presentation for $M(W,\cS)$ with
\begin{align*}
\text{generators:\hspace{1em}}&s\in S,a\in A,&&\\
\text{relations:\hspace{1em}}&(st)^{m_{st}}=1\,(s,t\in S),&&(a)\\
&a^2=a\,(a\in A),&&(b)\\
&a_1a_2=a_2a_1\,(a_1,a_2\in A),&&(c)\hspace*{1cm}(*)\\
&a_1\ldots a_{k-1}=a_1\ldots a_{k-1}a\,(a_i,a\in A)&&(d)\\
&\hspace*{0.5cm}\text{ for }a_1,\ldots,a_{k-1}\,,(3\leq k\leq\rk\cS)\text{ independent and }a\leq\bigvee a_i.&&\\
&sa=a^s s\,(s\in S,a\in A),&&(e)\\
&\ve\tau=\ve,\,(e\in\cS, t\in S_e).&&(f)\\
\end{align*}

\begin{remark*}
There are many more generators and relations in $(*)$ than in Theorem \ref{presentations:result1000}. 
For example, with the Boolean reflection monoid of type $A_{n-1}$ (\S\ref{section:popova:boolean}),
the presentation $(*)$ gives $n-1$ unit generators,
$n$ idempotent generators,
$n$ relations of the form $\aa^2=\aa$ and
$n(n-1)$ relations of the form 
$\aa_1\aa_2=\aa_2\aa_1$.
There are $2^n$ subspaces in the Boolean system, and if 
$Y=X(J)$ is one of them,
then a standard generating set for $W_Y$ has $n-|J|-1$ reflecting generators, for
a total of $2^{n-1}n(n-1)$ relations of the form 
$\ve\tau=\ve$. Theorem \ref{presentations:result1000}
on the other hand gives $n-1$ unit generators, a single idempotent generator, a single \emph{(Idem1)\/}
relation, a single \emph{(Idem2)\/}  relation, and a single \emph{(Iso)\/} relation.
\end{remark*}

Deducing Theorem \ref{presentations:result1000} is now a matter of thinning out
relations and generators from $(*)$, using the $W$-action on $\cS$.

\begin{lemma}\label{presentations:result300}
For $j=1,\ldots,k$ let $a_j\in A$ and $a'_j=a_j\cdot w$ with 
$W_i(x_1,\ldots,x_k)$ for $i=1,2$ words in the free monoid on the $x_i$. Then
the relation $W_1(a_1,\ldots,a_k)=W_2(a_1,\ldots,a_k)$ 
and the relations (e) imply the relation
$W_1(a'_1,\ldots,a'_k)=W_2(a'_1,\ldots,a'_k)$.
\end{lemma}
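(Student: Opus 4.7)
My plan is to push $\omega$ through any word in the atoms using the relations (e), and then cancel the resulting leftover copy of $\omega$ using the involutive nature of the Coxeter generators.

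The key preliminary is the following pushing identity: for any word $W(x_1,\ldots,x_k)$ in the free monoid on the $x_i$, the relations (e) imply
\[
W(a_1,\ldots,a_k) \cdot \omega = \omega \cdot W(a'_1,\ldots,a'_k).
\]
To establish this, first rewrite (e) in the form $a \cdot s = s \cdot (a \cdot s)$ for each $s \in S$: multiplying $s a = (a \cdot s) s$ on the left by $s$ and invoking $s^2 = 1$ (the case $s = t$ of (Units), since $m_{ss}=1$) gives $a = s (a \cdot s) s$, whence $a s = s (a \cdot s)$. Applying this letter by letter to the rightmost atom of a word $W$ yields $W(a_1,\ldots,a_k) \cdot s = s \cdot W(a_1 \cdot s, \ldots, a_k \cdot s)$. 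A routine induction on the length of $\omega = s_1 \cdots s_m$ then produces the displayed identity with $a'_j = a_j \cdot w$.

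Given the pushing identity, multiply both sides of the hypothesis $W_1(a_1,\ldots,a_k) = W_2(a_1,\ldots,a_k)$ on the right by $\omega$ and apply the identity to each side, obtaining
\[
\omega \cdot W_1(a'_1,\ldots,a'_k) = \omega \cdot W_2(a'_1,\ldots,a'_k).
\]
To cancel $\omega$ on the left, multiply both sides by the reversed word $\omega^{-1} := s_m \cdots s_1$. The relations (Units) provide $s^2 = 1$ for each $s \in S$, so $\omega^{-1} \omega$ collapses to the identity by removing adjacent $s_i s_i$ pairs from the innermost outward. This yields the desired relation $W_1(a'_1,\ldots,a'_k) = W_2(a'_1,\ldots,a'_k)$.

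The argument is essentially mechanical; the only piece of bookkeeping is the straightforward induction establishing the pushing identity, which presents no genuine obstacle.
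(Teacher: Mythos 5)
Your proposal is correct and is essentially the paper's argument in a slightly different packaging: the paper iterates relation (e) to obtain $a'_j=\omega^{-1}a_j\omega$ and then substitutes into $W_i$, collapsing the internal $\omega\omega^{-1}$ pairs, whereas you push the single word $\omega$ rightward past the whole word $W_i$ and then cancel it on the left; both hinge on the same iterated use of $sa=a^ss$ together with the (Units) relations. The only point worth noting is that, like the paper, you need the (Units) relations in addition to (e) for the cancellation, which you correctly make explicit.
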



\begin{proof}
If $s\in S,a\in A$ and $as=a'\in A$, then relations $(e)$ of $(*)$ give a relation 
$sa=a's$, hence
$a'=sas$. By induction, if $a'=aw$ for some $w\in W$ we have the relation 
$a'=\omega^{-1}a\omega$.
Thus, for all $j$ we have 
$a'_j=\omega^{-1}a_j\omega$ 
so that
$W_i(a'_1,\ldots,a'_k)=W_i(\omega^{-1}a_1\omega,\ldots,\omega^{-1}a_k\omega)
=\omega^{-1}W_i(a_1,\ldots,a_k)\omega$, and the result
follows. 
\qed
\end{proof}

\begin{lemma}
\label{presentations:result400}
Let $e\in\cS$ and $t\in T$ be such that $H_t\supseteq e$. Then there is an 
$(e',s)\in\mathit{Iso}$ and $w\in W$ with $t=w^{-1}sw$ and $e'w\supseteq e$ and
the relation $\ve\tau=\ve$ of $(*)$ implied by the 
(Iso) relation $\ve'\ss=\ve'$ and the relations (a)-(e).
\end{lemma}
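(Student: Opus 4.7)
The plan is to reduce the target relation $\ve\tau=\ve$ to a single $\mathit{Iso}$ relation in two stages: first, locate the pair $(e',s)\in\mathit{Iso}$ and the element $w$ using the $W$-action on the arrangement and on $\cS$; then, derive $\ve\tau=\ve$ from $\ve'\ss=\ve'$ by conjugating the Iso relation by $\omega$ and absorbing the result into $\ve$ using the idempotent relations.

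For the first stage, I would invoke (P4). Every reflection in $T$ is $W$-conjugate to some $s_i$ among the orbit representatives $s_1,\ldots,s_\ell$, so $t=w^{-1}sw$ for some $s=s_i\in S$ and $w\in W$. Since $H_t=H_s\cdot w$, the hypothesis $H_t\supseteq e$ translates to $H_s\supseteq e\cdot w^{-1}$, so $e\cdot w^{-1}$ lies in the set $\{X\in\cS\mid H_s\supseteq X\}$, which is thus non-empty. Being a finite poset, this set has minimal elements below $e\cdot w^{-1}$; fix one, call it $e'$. By the definition of $\mathit{Iso}$ in (P4), $(e',s)\in\mathit{Iso}$; and $e'\leq e\cdot w^{-1}$ means $e'\supseteq e\cdot w^{-1}$ as subspaces, so $e'\cdot w\supseteq e$, exactly as required.

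The heart of the proof is the second stage. Multiplying $\ve'\ss=\ve'$ on the left by $\omega^{-1}$ and on the right by $\omega$, and inserting copies of $\omega\omega^{-1}=1$ (legitimate under the unit relations (a)), gives
\[
\omega^{-1}\ve'\omega\cdot\omega^{-1}\ss\omega=\omega^{-1}\ve'\omega.
\]
The factor $\omega^{-1}\ss\omega$ is a word in $S$ representing $t\in W$, and so equals the fixed word $\tau$ modulo the Coxeter relations (a). By the preceding Lemma, the conjugation procedure using (e) lets us rewrite $\omega^{-1}\aa_j'\omega$ as $\aa_j'^{\omega}$ (the fixed word for $a'_j\cdot w$) for each atom $a'_j$ of $e'$. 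Applied to the product $\ve'=\prod_j\aa'_j$, this yields the derived word $\tilde\varphi:=\prod_j\aa_j'^\omega$ equal to $\omega^{-1}\ve'\omega$ in the monoid, and representing the element $f:=e'\cdot w\in\cS$. Combining these rewrites gives the derived relation $\tilde\varphi\,\tau=\tilde\varphi$.

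To finish, I would use that $f\supseteq e$, i.e.\ $f\vee e=e$ in the poset $\cS$, so the word $\tilde\varphi\ve$ represents the same element $e$ as $\ve$. By the argument of the Proposition on atomic idempotent monoid presentations in \S\ref{section:idempotents:generalities}, the idempotent relations (b)--(d) are enough to transform any word for $e$ into the canonical word $\ve$; in particular $\tilde\varphi\ve=\ve$ is derivable, and then $\ve\tilde\varphi=\ve$ follows using commutativity (c). Assembling the pieces,
\[
\ve\tau \;=\; \ve\tilde\varphi\,\tau \;=\; \ve\tilde\varphi \;=\; \ve,
\]
which is the desired relation. The main obstacle is the conjugation manipulation in the middle stage — turning $\omega^{-1}\ve'\omega$ into $\tilde\varphi$ and $\omega^{-1}\ss\omega$ into $\tau$ while staying within the derivable relations uses (a) and (e) non-trivially — but this is precisely what the preceding Lemma is designed to handle, leaving the rest of the argument as a straightforward translation between the $W$-action on $\cS$ and word-manipulations in the presentation.
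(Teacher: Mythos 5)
Your proposal is correct and follows essentially the same route as the paper's proof: both locate $(e',s)\in\mathit{Iso}$ and $w$ via the $W$-action on $\cA$ and the minimality clause of (P4), and both derive $\ve\tau=\ve$ by conjugating the Iso relation by $\omega$ (using (a) and (e)) and absorbing $\omega^{-1}\ve'\omega$ into $\ve$ via the idempotent relations, since $e=e\cdot e'w$ in the monoid $\cS$. Your write-up is merely a more explicit unpacking of the paper's one-line chain $\ve\tau=\ve\omega^{-1}\ve'\omega\omega^{-1}s\omega=\ve\omega^{-1}\ve's\omega=\ve\omega^{-1}\ve'\omega=\ve$.
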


\begin{proof}
There is an $s$ with $H_{s}w=H_t$. Thus $H_{s}\supseteq ew^{-1}$ and there
is an $e'$ with $H_{s}\supseteq e'\supseteq ew^{-1}$
and $(e',s)\in\mathit{Iso}$. This pair satisfies the requirements of
the Lemma and moreover,
$e=e\cdot e'w$
in (the monoid) $\cS$, 
so that the relations $(a)$-$(e)$ of $(*)$
give $\ve=\ve\omega^{-1}\ve'\omega$ and
$\tau=\omega^{-1}s\omega$. Thus
$$
\ve\tau
=\ve\omega^{-1}\ve'\omega\omega^{-1}s\omega
=\ve\omega^{-1}\ve' s\omega
=\ve\omega^{-1}\ve'\omega
=\ve.
$$
\qed
\end{proof}

\begin{proof}[of Theorem \ref{presentations:result1000}]
Lemma \ref{presentations:result400} allows us to thin
out family $(f)$ in $(*)$ to give the \emph{(Iso)\/} relations of Theorem \ref{presentations:result1000}.
Lemma \ref{presentations:result300} allows us to thin out the families $(b)$-$(d)$
in $(*)$ to involve just the orbit representatives as in Theorem
\ref{presentations:result1000}. 
The \emph{(Units)\/} relations we leave
untouched. 
Finally a generator $a\in A$
can be expressed as $\aa=\omega^{-1}a_0\omega$ for some $a_0\in O_1$, and this
allows us to thin these generators, and replace each occurrence of $a$
in the \emph{(RefIdem)\/} relations by
$\aa$.
\qed
\end{proof}

\begin{remarks*} There are a number of variations on Theorem
\ref{presentations:result1000}: 
  \begin{enumerate}
 \item There is a completely analogous presentation when $M(W,\cS)$ is
   a monoid of partial permutations. Let $E$ be a set, $W=(W,S)$ a
   (not necessarily finite) Coxeter system acting faithfully on $E$
   and $\cS$ a graded atomic system of
   subsets of $E$ for $W$. For $t\in T=W^{-1}SW$ let $H_t\subseteq E$ be
   the set of fixed points of $t$ and $\AA=\{H_t\,|\,t\in T\}$. Notice
   that $H_t\cdot w=H_{w^{-1}tw}$ so there is an induced $W$-action on
   $\AA$. There is one condition that we must impose: for any
   $e\in\cS$ the isotropy group $W_e$ is generated by reflections;
   indeed, by the $t\in T$ with $H_t\supseteq e$. Adapting (P1)-(P4),
   the presentation of Theorem \ref{presentations:result1000} now goes
   straight through for $M(W,\cS)$.
\item If $\cS$ is a geometric lattice, then the \emph{(Idem3)\/} relations
of Theorem \ref{presentations:result1000} can be replaced by 
\begin{align*}
&\wh{\aa}_1\ldots\aa_{k}=\cdots=\aa_1\ldots\wh{\aa}_{k},\,(a_1,\ldots,a_k\in O_{k})&&\\
&\hspace*{1cm}\text{ with }a_1,\ldots,a_k\text{ minimally dependent
  and }3\leq k\leq \rk\cS.&&\text{\emph{(Idem3a)}}\\
\end{align*}
\item 
The sets $O_k$ of (P1) can sometmes be hard to describe; their
definitions can be varied in two ways--either by changing the group
or the set on which it acts (or both, as in
\S\ref{section:renner:classical2}).
This has the effect of introducing more relations. 
  If $W'$ is a subgroup of $W$ we can replace
  $O_k$ by $O'_k$, a set of orbit representatives for the $W$-action
  restricted to $W'$. On the other hand, it may be more convenient to
  describe orbit
  representatives for the $W$-action 
$(a_1,\ldots,a_k)\stackrel{w}{\mapsto}(a_1\cdot w,\ldots,a_k\cdot w)$ on
ordered \emph{$k$-tuples\/} of distinct atoms. The commuting of the
idempotents then allows us to return to sets $\{a_1,\ldots,a_k\}$.
\item We have kept things very explicit with $W=(W,S)$ a Coxeter
  system, but Theorem \ref{presentations:result1000} can easily be
  generalized without changing its form too seriously. Let $(W,S)$
  with $W$ an arbitrary group and $S$ an arbitrary set of
  generators. The definitions of $T,H_t$ and $\AA$ are the same, and
  we impose the condition that for $e\in\cS$ the group $W_e$ is
  generated by elements of $T$. The only change to Theorem
  \ref{section:renner:classical2} is then to the \emph{(Units)}
  relations, which are replaced by a set of relators for $W$.
 \item All the reflection groups in this paper are over $k=\R$. For
   an arbitrary field $k$ the \emph{(Units)} relations will be different
   (reflection groups are Coxeter groups only when $k=\R$) and it may
   be that the $W_e$ are not generated by reflections,
   although it is known that they are when $k=\C$ \cite{Steinberg64} or a finite
   $\F_q$ \cite{Nakajima79}.
  \end{enumerate}
\end{remarks*}

\section{Boolean reflection monoids}
\label{section:popova:boolean}

In \cite{Everitt-Fountain10}*{\S5} we introduced the Boolean reflection monoids,
formed from a  Weyl group $W(\Phi)$ for $\Phi=A_{n-1}, B_n$ or $D_n$,
and the Boolean system $\BB$.  
In this section we find the presentations given by Theorem \ref{presentations:result1000}. 
In particular,
we recover Popova's presentation \cite{Popova61}
for the symmetric inverse monoid by interpreting it as the Boolean reflection
monoid of type $A$. 

Recall from \cite{Everitt-Fountain10}*{\S5} that $V$ is a Euclidean space
with basis $\{v_1,\ldots,v_n\}$ and inner product $(v_i,v_j)=\delta_{ij}$,
with $\Phi\subset V$ a root system 
from Table \ref{table:roots1} and $W(\Phi)\subset GL(V)$ the associated 
reflection group. The Coxeter generators for $W(\Phi)$ are given in the third column 
of Table \ref{table:roots1}: let $s_i$ $(1\leq i\leq n-1$) be the reflection
in the hyperplane orthogonal to $v_{i+1}-v_i$, with $s_0$ the reflection in $v_1$
(type $B$) or in $v_1+v_2$ (type $D$).

For $J\subseteq X=\{1,\ldots,n\}$ let
$$
X(J)=\bigoplus_{j\in J}\R v_j\subseteq V,  
$$
and $\BB=\{X(J)\,|\,J\subseteq X\}$ with $X(\varnothing)=\mathbf{0}$.
Then by \cite{Everitt-Fountain10}*{\S5}, $\BB$ is a system in $V$
for $W(\Phi)$--the \emph{Boolean\/} system--and $M(\Phi,\BB):=M(W(\Phi),\BB)$ is called the
\emph{Boolean reflection monoid of type $\Phi$}.

We've obviously seen the poset $(\BB,\supseteq)$ before: it is isomorphic
to the Boolean lattice $\BB_X$
of \S\ref{section:idempotents:generalities}, with atoms $A$ the
$a_i:=X(1,\ldots,\wh{i},\ldots,n)=v_i^\perp$. For $k\leq n$ the $W(\Phi)$-action
on $A$ is $k$-fold transitive, so the $O_k$ each contain a single element.
We choose $O_1=\{a_1\}$ and $O_2$ the pair $\{a_1,a_2\}$. Rather than $a_1$ we will write
$a\in O_1$ for our single idempotent generator. If $i>1$ then
let $w$ be the reflection $s_{v_1-v_i}$ so that $a_i=aw$; let 
$\omega:=s_1\ldots s_{i-1}$ so that 
\begin{equation}
  \label{eq:19}
\aa_i:=(s_{i-1}\ldots s_1)a(s_1\ldots s_{i-1})
\end{equation}
Any $e\in\BB$ can be written
uniquely as $e=a_{i_1}\vee\cdots\vee a_{i_k}$ for $i_1<\cdots<i_k$; write 
$\ve:=\aa_{i_1}\ldots\aa_{i_k}$. 

The result is that the Boolean reflection monoids have generators 
the $s_i$ and a single idempotent $a$, with the \emph{(Idem1)\/} relations 
$a^2=a$, and the \emph{(Idem2)\/} relations $a\aa_2=\aa_2a$,
or $as_{1}as_{1}=s_{1}as_{1}a$. 

We saw at the end of \S\ref{section:idempotents:generalities} that any set
of atoms in $\BB$ is independent, so the \emph{(Idem3)\/} relations are vacuous. Note also
the ``thinning'' effect of the $W(\Phi)$-action: the $n$ generators and
$n+\frac{1}{2}n(n-1)$ relations of \S\ref{section:idempotents:generalities} have been
reduced to just one generator and two relations.

Now to the \emph{(Iso)\/} relations. Dropping the even labeled edges
from the symbols in Table \ref{table:roots1} and choosing an $s\in S$
from each resulting component gives representatives $H_{s_1}$ in types
$A$ and $D$ and $H_{s_0},H_{s_1}$ in type $B$. If $X(J)\in\BB$ is to
be minimal with $H_{s_1}\supseteq X(J)$ then $J$ is minimal with
$1,2\not\in J$, i.e.: $J=\{3,\ldots,n\}$ and $X(J)=a_1\vee a_2$
(compare this with the calculation at the end of Example
\ref{eg:general} in \S\ref{section:renner:classical1}). 
Similarly with $H_{s_0}$ we have $X(J)=a_1$, and so
$$
\begin{tabular}{cc}
\hline
$\Phi$&$\mathit{Iso}$\\\hline \vrule width 0 mm height 5 mm depth 0 mm
$A_{n-1}$
&
$(a_1\vee a_2,s_1)$
\\
$B_n$
&
$(a_1\vee a_2,s_1), (a_1,s_0)$
\\
$D_n$
&
$(a_1\vee a_2,s_1)$\vrule width 0 mm height 0 mm depth 4 mm
\\\hline
\end{tabular}
$$
The \emph{(Iso)} relations are thus $as_1a=as_1as_1$
in all cases, together with $as_0=a$ in type $B$.

This completes the presentation given by Theorem \ref{presentations:result1000}
for the Boolean reflection monoids. But it turns out that the \emph{(RefIdem)\/} relations 
can be significantly reduced in number. For all three $\Phi$ we have \emph{(Units)\/} relations
$(s_is_{i+1})^3=1$ for $1\leq i\leq n-2$, which we use in their ``braid'' form,
$s_{i+1}s_is_{i+1}=s_is_{i+1}s_i$. Then:

\begin{lemma}\label{popova:boolean:result100}
The relations $s_i\aa_j=\aa_j^{s_i}s_i$
for $1\leq i\leq n-1$ and $1\leq j\leq n$ are implied by 
the (Units) relations and the relations
$s_ia=\aa^{s_i}s_i$ for $1\leq i\leq n-1$, ie.: the relations 
$s_ia=as_i$, $(i\not=1)$.
\end{lemma}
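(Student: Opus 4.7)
The plan is to reduce the claim to a few cases based on the relative positions of $i$ and $j$, with the main tool being a commutation identity in the Coxeter generators that follows from the (Units) relations alone.

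First I would record the action of $W(\Phi)$ on the atoms: $s_i$ swaps $a_i$ and $a_{i+1}$ and fixes the others, so the target word $\aa_j^{s_i}$ is $\aa_{j+1}$ if $j=i$, $\aa_{j-1}$ if $j=i+1$, and $\aa_j$ otherwise. The case $j=1$ is immediate since $\aa_1=a$ and the desired relation is then literally a hypothesis (with $s_1 a=\aa^{s_1}s_1=(s_1 a s_1)s_1=s_1 a$ a tautology via $s_1^2=1$). From now on assume $j\ge 2$ so that the explicit formula (\ref{eq:19}), namely $\aa_j=s_{j-1}\cdots s_1\, a\, s_1\cdots s_{j-1}$, is available, and use it together with $s_i^2=1$ in the equivalent conjugated form $s_i\aa_j s_i=\aa_j^{s_i}$.

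The key auxiliary claim, proved by induction on $j-i$ using only the (Units) braid and commutation relations, is
$$
s_i\cdot s_{j-1}s_{j-2}\cdots s_1 \;=\; s_{j-1}s_{j-2}\cdots s_1\cdot s_{i+1}\qquad (1\le i\le j-2).
$$
The base case $j=i+2$ is the braid relation $s_i s_{i+1}s_i=s_{i+1}s_i s_{i+1}$ combined with the fact that $s_{i+1}$ commutes with $s_{i-1},\dots,s_1$; the inductive step uses $s_i s_{j-1}=s_{j-1}s_i$, valid when $j-1\ge i+2$. Taking inverses produces the mirrored identity $s_1\cdots s_{j-1}\cdot s_i=s_{i+1}\cdot s_1\cdots s_{j-1}$.

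With this in hand I would split into four cases.
\textbf{(a)} If $i\le j-2$, applying the identity and its mirror to the two halves of $\aa_j$ yields
$$
s_i\aa_j s_i \;=\; (s_{j-1}\cdots s_1)\,s_{i+1}\, a\, s_{i+1}\,(s_1\cdots s_{j-1});
$$
since $i+1\ge 2$ the hypothesis $s_{i+1}a=as_{i+1}$ applies, and $s_{i+1}^2=1$ collapses the middle to $a$, giving $\aa_j$ as required.
\textbf{(b)} If $i=j-1$, then $s_{j-1}\aa_j s_{j-1}=s_{j-2}\cdots s_1\,a\,s_1\cdots s_{j-2}=\aa_{j-1}$ by $s_{j-1}^2=1$.
\textbf{(c)} If $i=j$, then $s_j\aa_j s_j=\aa_{j+1}$ directly from the definition.
\textbf{(d)} If $i\ge j+1$, then $|i-k|\ge 2$ for every $k\le j-1$, so $s_i$ commutes with each factor $s_k$ in $\aa_j$, and since $i\ne 1$ the hypothesis also lets $s_i$ pass through $a$, yielding $s_i\aa_j=\aa_j s_i$.

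The only real obstacle is the commutation identity, and even that is a short braid calculation; everything else is bookkeeping, and no relation outside (Units) and the stated base hypotheses $s_i a=\aa^{s_i}s_i$ is used.
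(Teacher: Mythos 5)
Your proof is correct and follows essentially the same route as the paper: the same four-way case split on the position of $i$ relative to $j$, with the central step being the braid-derived identity $s_i(s_{j-1}\cdots s_1)=(s_{j-1}\cdots s_1)s_{i+1}$, which the paper carries out inline rather than isolating as an induction. The only cosmetic difference is your explicit treatment of $j=1$, which the paper leaves implicit.
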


\begin{proof}
We have 
$$
a_js_i=
\left\{
\begin{array}{ll}
a_{j-1},&i=j-1,\\
a_{j+1},&i=j,\\
a_j,&i\not= j-1,j,\\
\end{array}
\right.
$$
hence $\aa_j^{s_i}$ is one of the words $\aa_{j-1}$ $(i=j-1)$ or
$\aa_{j+1}$ $(i=j)$ or $\aa_j$ (otherwise) chosen in (\ref{eq:19}). 
There are then four cases to consider:
(i). $1\leq i<j-1$: 
\begin{align*}
s_i\aa_j
&=s_i(s_{j-1}\ldots s_{1})a(s_{1}\ldots s_{j-1})
=(s_{j-1}\ldots s_{i}s_{i+1}s_i\ldots s_{1})a(s_{1}\ldots s_{j-1})\\
&=(s_{j-1}\ldots s_{i+1}s_is_{i+1}\ldots s_{1})a(s_{1}\ldots s_{j-1})
=(s_{j-1}\ldots s_{1})s_{i+1}a(s_{1}\ldots s_{j-1})\\
&=(s_{j-1}\ldots s_{1})a s_{i+1}(s_{1}\ldots s_{j-1})
=(s_{j-1}\ldots s_{1})a(s_{1}\ldots s_{i+1}s_is_{i+1}\ldots s_{j-1})\\
&=(s_{j-1}\ldots s_{1})a(s_{1}\ldots s_is_{i+1}s_i\ldots s_{j-1})
=(s_{j-1}\ldots s_{1})a(s_{1}\ldots s_{j-1})s_i\\
&=\aa_js_i,
\end{align*}
where we have used the braid relations and the commuting of $s_{i+1}$ and 
$a$.
(ii). $j<i\leq n-1$: $s_i$ commutes with $s_1,\ldots,s_{j-1}$ and $a$,
giving the result immediately.
(iii). $i=j-1$: $s_{j-1}\aa_j
=s_{j-1}(s_{j-1}\ldots s_{1})a(s_{1}\ldots s_{j-1})
=(s_{j-2}\ldots s_{1})a(s_{1}\ldots s_{j-1})s_{j-1}s_{j-1}
=\aa_{j-1}s_{j-1}$
(iv). $i=j$: $s_{j}\aa_j
=s_{j}(s_{j-1}\ldots s_{1})a(s_{1}\ldots$ $s_{j-1})
=(s_{j}\ldots s_{1})a(s_{1}\ldots s_{j-1})s_js_j
=\aa_{j+1}s_j$.
\qed
\end{proof}

Putting it all together in the type $A$ case 
we get the following presentation:

\label{popova:boolean:result150}
\paragraph{The Boolean reflection monoid of type $A$:} 
$$
\begin{pspicture}(0,0)(14,2.5)
\rput(0,0){
\rput(4,1){\BoxedEPSF{fig2.eps scaled 400}}
\rput(1.65,.6){$s_1$}\rput(3,.6){$s_2$}
\rput(4.9,.6){$s_{n-2}$}\rput(6.35,.6){$s_{n-1}$}
}
\rput(8,1.5){$\begin{array}{rl}
M(A_{n-1},\BB)=\langle s_1,\ldots,s_{n-1},a\,|\,
&(s_is_j)^{m_{ij}}=1,\,a^2=a,
\vrule width 0 mm height 0 mm depth 3 mm\\
&s_ia=a s_i\,(i\not=1),
\vrule width 0 mm height 0 mm depth 3 mm\\
&as_1a=a s_{1}a s_{1}=s_{1}a s_{1}a\rangle\\
\end{array}$}
\end{pspicture}
$$

Recall that the $m_{ij}$ can be read off the Coxeter symbol,
with the nodes joined
by an edge labelled $m_{ij}$ if $m_{ij}\geq 4$, an unlabelled edge
if $m_{ij}=3$, no edge if $m_{ij}=2$ (and $m_{ij}=1$ when $i=j$).
The relation $s_ia=\aa^{s_i}s_i$ is vacuous when $i=1$.

\begin{figure}
  \centering
\begin{pspicture}(0,0)(14.3,3)
\rput(0,0){
\rput(0,0){\qdisk(2,1){.1}\rput(2,.7){$1$}
\psbezier[showpoints=false,linewidth=.3mm]{->}(1.8,1.2)(1.3,2)(2.7,2)(2.2,1.2)
}
\rput(2.75,1){$\ldots$}
\rput(1.5,0){\qdisk(2,1){.1}
\psbezier[showpoints=false,linewidth=.3mm]{->}(1.8,1.2)(1.3,2)(2.7,2)(2.2,1.2)
}
\qdisk(4.5,1){.1}\rput(4.5,.7){$i$}
\psbezier[showpoints=false,linewidth=.3mm]{<->}(4.5,1.2)(4.5,2)(5.5,2)(5.5,1.2)
\qdisk(5.5,1){.1}\rput(5.5,.7){$i+1$}
\rput(5,2.25){$s_i\,(1\leq i<n-1)$}
\rput(4.5,0){\qdisk(2,1){.1}
\psbezier[showpoints=false,linewidth=.3mm]{->}(1.8,1.2)(1.3,2)(2.7,2)(2.2,1.2)
}
\rput(7.25,1){$\ldots$}
\rput(6,0){\qdisk(2,1){.1}\rput(2,.7){$n$}
\psbezier[showpoints=false,linewidth=.3mm]{->}(1.8,1.2)(1.3,2)(2.7,2)(2.2,1.2)
}}
\rput(8,0){
\rput(0,0){\qdisk(2,1){.1}}\rput(2,.7){$1$}
\rput(1,0){\qdisk(2,1){.1}\rput(2,.7){$2$}
\psbezier[showpoints=false,linewidth=.3mm]{->}(1.8,1.2)(1.3,2)(2.7,2)(2.2,1.2)
}
\rput(3.75,1){$\ldots$}\rput(3.5,2.25){$a$}
\rput(2.5,0){\qdisk(2,1){.1}\rput(2,.7){$n$}
\psbezier[showpoints=false,linewidth=.3mm]{->}(1.8,1.2)(1.3,2)(2.7,2)(2.2,1.2)
}}
\end{pspicture}
  \caption{Type $A$ Boolean reflection monoid generators as partial permutations.}
  \label{fig1}
\end{figure}
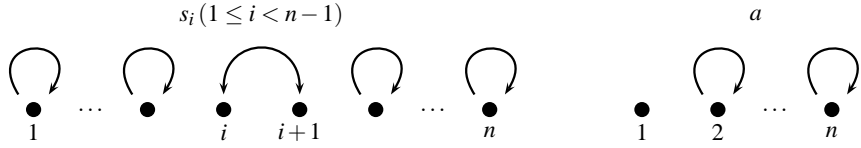

\begin{remark*}
We saw in \cite{Everitt-Fountain10}*{\S3.1} that $M(A_{n-1},\BB)$ is isomorphic
to the symmetric inverse monoid $\II_n$ with the generators $s_i$ and $a$  
corresponding to the partial permutations in Figure \ref{fig1}--we thus recover
Popova's presentation \cite{Popova61} for the symmetric inverse monoid.   
\end{remark*}

Now to the type $B$ Boolean reflection monoids, where we have one piece of unfinished business,
namely that 
Lemma \ref{popova:boolean:result100} leaves unresolved the status of the 
\emph{(RefIdem)\/} relations when $s=s_0$:

\begin{lemma}\label{popova:boolean:result200}
If $\Phi=B_n$ then the relations $s_0\aa_j=\aa_j^{s_0}s_0$
for $1\leq j\leq n$ are implied by the (Units) relations and
the relation $s_0\aa_2=\aa_2s_0$, i.e.: $s_0s_1as_1=s_1as_1s_0$.
\end{lemma}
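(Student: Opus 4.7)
The $(s_0,\aa_j)$-RefIdem relations all collapse to commutativity statements. Each atom $a_j=v_j^\perp$ is fixed setwise by $s_0$ (since $v_j s_0=\pm v_j$), so $\aa_j^{s_0}=\aa_j$ and the relation $s_0\aa_j=\aa_j^{s_0}s_0$ reads $s_0\aa_j=\aa_j s_0$ for each $j=1,\ldots,n$. The case $j=2$ is the standing hypothesis, so the work splits into the easy range $j\geq 3$ and the stubborn case $j=1$.

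For $j\geq 3$, my plan is to unpack $\aa_j=(s_{j-1}s_{j-2}\cdots s_2)\,\aa_2\,(s_2 s_3\cdots s_{j-1})$. Since the Coxeter symbol of $B_n$ gives $m_{0i}=2$ for every $i\geq 2$, the generator $s_0$ commutes with every $s_i$ in the two outer strings by (Units). Pushing $s_0$ through the left string, applying the hypothesis $s_0\aa_2=\aa_2 s_0$ once in the middle, and then pushing the resulting $s_0$ back out through the right string yields $s_0\aa_j=\aa_j s_0$. This is essentially the braid-and-commute manipulation already used in the proof of Lemma \ref{popova:boolean:result100}.

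The case $j=1$ is the real obstacle. The target is $s_0 a=a s_0$, and combined with the (Iso) relation $as_0=a$ it is equivalent to $s_0 a=a$; but the hypothesis $s_0\aa_2=\aa_2 s_0$ never interacts directly with $\aa_1=a$, and straightforward word rewriting (using Units, Idem, Iso and the hypothesis) only seems to yield consequences such as $(s_0 a)^2=s_0 a$, $a(s_0 a)=a$ and $(s_0 a)a=s_0 a$, none of which individually force $s_0 a=a$. My plan is instead to invoke the inverse-monoid structure underlying the presentation of Theorem \ref{presentations:result1000} (inherited from the Easdown-East-Fitzgerald framework of Theorem \ref{presentations:result2000}): the presented monoid is a factorizable inverse monoid, in which $a^{-1}=a$ (as $a$ is idempotent) and $s_0^{-1}=s_0$ (as $s_0^2=1$). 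Applying the inverse-anti-homomorphism $(xy)^{-1}=y^{-1}x^{-1}$ to the identity $as_0=a$ therefore gives
$$s_0 a \,=\, s_0^{-1}a^{-1} \,=\, (as_0)^{-1} \,=\, a^{-1} \,=\, a,$$
which combined with $as_0=a$ supplies the missing relation $s_0 a=as_0$ and completes the lemma.
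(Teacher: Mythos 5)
Your reduction for $3\leq j\leq n$ is exactly the paper's argument: write $\aa_j=(s_{j-1}\cdots s_2)\,\aa_2\,(s_2\cdots s_{j-1})$, slide $s_0$ past each $s_i$ with $i\geq 2$ using the (Units) relations $(s_0s_i)^2=1$, apply the hypothesis $s_0\aa_2=\aa_2s_0$ once in the middle, and slide back out. Your preliminary observation that $\aa_j^{s_0}=\aa_j$ for every $j$, so that all the relations are commutation statements, is also correct.

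The $j=1$ step is where the proposal breaks. Applying $(\;\cdot\;)^{-1}$ to both sides of $as_0=a$ is not a legitimate derivation inside a monoid presentation: at this stage the presented object is only a quotient of a free monoid, and the existence of an inversion anti-automorphism fixing $a$ and $s_0$ is a property of the target $M(B_n,\BB)$ --- i.e.\ part of what the whole chain of lemmas is establishing --- not something you may assume of the abstract quotient. The obvious route to such an anti-automorphism (checking that the relation set is closed under word reversal) fails precisely because the reversal of $as_0=a$ is the unproved $s_0a=a$. And the implication is genuinely false in general monoids: take $a$ a constant map and $s_0$ a transposition in a full transformation monoid, where one of $as_0=a$, $s_0a=a$ holds and the other fails. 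Note also that your argument invokes the (Iso) relation $as_0=a$, which is not among the lemma's stated hypotheses. The resolution is that no derivation of the $j=1$ relation is intended: the paper's own proof treats only $3\leq j\leq n$, and the relation $s_0a=as_0$ is \emph{retained} verbatim in the final presentation of $M(B_n,\BB)$, as one of the relations $s_ia=as_i\,(i\neq 1)$. The ``$1\leq j\leq n$'' in the statement is loose and should be read as in the type $D$ analogue a few lines later, where the small-$j$ relations are listed explicitly among the hypotheses. So delete the $j=1$ derivation and instead observe that this relation is one of those kept in the presentation.
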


\begin{proof}
For $3\leq j\leq n$ we have 
$s_0\aa_j
=s_0(s_{j-1}\ldots s_{1})a(s_{1}\ldots s_{j-1})$
$=(s_{j-1}\ldots s_2s_0)\aa_{2}(s_{2}$ $\ldots s_{j-1})$
$=(s_{j-1}\ldots s_{2})\aa_{2}s_0(s_{2}\ldots s_{j-1})$
$=\aa_{j}s_0$.
\qed
\end{proof}


\label{popova:boolean:result250}
\paragraph{The Boolean reflection monoid of type $B$:} 
$$
\begin{pspicture}(0,0)(14,2.75)
\rput(-.4,0.25){
\rput(4,1){\BoxedEPSF{fig2.eps scaled 400}}
\rput(1.65,.6){$s_0$}\rput(3,.6){$s_1$}
\rput(4.9,.6){$s_{n-2}$}\rput(6.35,.6){$s_{n-1}$}
\rput(2.4,1.25){$4$}
}
\rput(8.4,1.5){$\begin{array}{rl}
M(B_n,\BB)=\langle s_0,\ldots,s_{n-1},a\,|\,
&(s_is_j)^{m_{ij}}=1,\,a^2=a,
\vrule width 0 mm height 0 mm depth 3 mm\\
&s_ia=a s_i\,(i\not=1),\,as_0=a,
\vrule width 0 mm height 0 mm depth 3 mm\\
&s_0s_{1}a s_{1}=s_{1}a s_{1}s_0,
\vrule width 0 mm height 0 mm depth 3 mm\\
&as_1as_1=s_1as_1a=as_1a\rangle.\\
\end{array}$}
\end{pspicture}
$$

\begin{remark*}
We saw in \cite{Everitt-Fountain10}*{\S5} that just as the Weyl group $W(B_n)$ is isomorphic 
to the group $\SS_{\pm n}$ of signed permutations of
$X=\{1,2,\ldots,n\}$
(see also \S\ref{section:renner:classical1}), so the Boolean reflection
monoid $M(B_n,\BB)$ is isomorphic to the monoid of \emph{partial\/} signed permutations
$\II_{\pm n}:=\{\pi\in\II_{X\cup -X}\,|\, (-x)\pi=-(x\pi)\text{ and }x\in\dom\,\pi
\Leftrightarrow -x\in\dom\,\pi\}$.
\end{remark*}

And so finally to the type $D$ Boolean reflection monoids, where one can prove in a manner analogous to
Lemma \ref{popova:boolean:result200}  that
the relations $s_0\aa_{j}=\aa_{j}^{s_0}s_0$
for $1\leq j\leq n$ are implied by $s_0a=\aa_{2}s_0$, 
$s_0\aa_{3}=\aa_{3}s_0$
and the relations for $W$.

\paragraph{The Boolean reflection monoid of type $D$:} 
$$
\begin{pspicture}(0,0)(14,2.75)
\rput(-1.4,0){
\rput(4,1){\BoxedEPSF{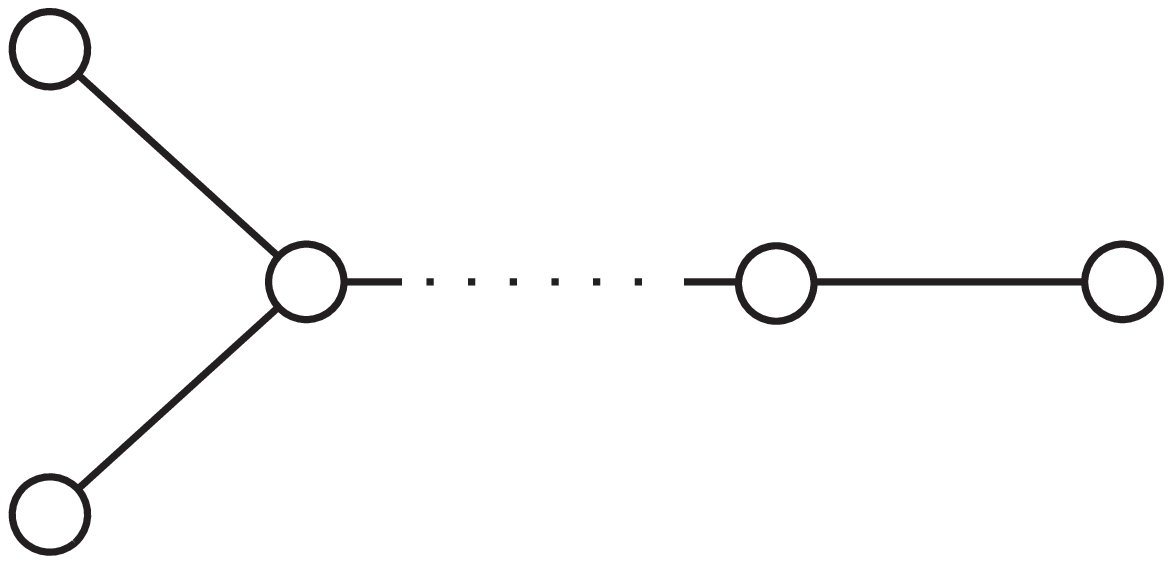 scaled 400}}
\rput(1.75,.45){$s_0$}\rput(1.75,2.35){$s_{1}$}\rput(2.95,.6){$s_{2}$}
\rput(4.85,.6){$s_{n-2}$}\rput(6.25,.6){$s_{n-1}$}
}
\rput(8,1.5){$\begin{array}{rl}
M(D_n,\BB)=\langle s_0,\ldots,s_{n-1},a\,|\,
&(s_is_j)^{m_{ij}}=1,\,a^2=a,
\vrule width 0 mm height 0 mm depth 3 mm\\
&s_ia=a s_i\,(i>1),\,s_0a=s_1as_1s_0,
\vrule width 0 mm height 0 mm depth 3 mm\\
&as_1a=as_1as_1=s_1as_1a,
\vrule width 0 mm height 0 mm depth 3 mm\\
&s_0s_2s_1as_1s_2=s_2s_1as_1s_2s_0\rangle.\\
\end{array}$}
\end{pspicture}
$$

Unfortunately $M(D_n,\BB)$ doesn't seem to have a nice interpretation in terms of 
partial permutations to go with the isomorphism between the Weyl group
$W(D_n)$ and the group of even signed permutations of
$\{1,\ldots,n\}$--see also \S\ref{section:renner:classical1}.

\section{Coxeter arrangement monoids}
\label{section:popova:arrangement}

We now repeat \S\ref{section:popova:boolean} for the Coxeter arrangement monoids
of \cite{Everitt-Fountain10}*{\S6}. 
Let $W=W(\Phi)\subset GL(V)$
be a reflection group with reflecting hyperplanes $\AA=\{v^\perp\,|\,v\in\Phi\}$
and $\cH=\cH(\Phi)$ the intersection lattice of
\S\S\ref{section:idempotents:geometric}-\ref{section:idempotents:coxeter}: 
this is a system for $W$ in $V$--the 
\emph{Coxeter arrangement\/} system. 
We write $M(\Phi,\HH)$ for the
resulting \emph{Coxeter arrangement monoid\/} of type $\Phi$. We use the notation for the Coxeter
generators from \S\ref{section:popova:boolean}. 

The atoms $A$ for the system and the hyperplanes $\AA$ coincide now.
Drop even labeled edges from the symbols in Table \ref{table:roots1}
to get $O_1=\{a\}$ in types $A$ and $D$, or $\{a_1,a_2\}$ in type $B$, where
$$
a=a_1:=(v_2-v_1)^\perp
\text{ and }
a_2:=v_1^\perp;
$$
giving generators the $s_i$ of \S\ref{section:popova:boolean} and $a$ for types $A$ and $D$, or the
$s_i$ and $a_1,a_2$ for type $B$.

The \emph{(Iso)\/} relations are particularly simple when the system and 
the intersection lattice are the same:
the $(e,s)\in\mathit{Iso}$ consist of the representative $a=H_{s}$ and
the $s$ above. Thus, the relations 
are $as_1=a$ for types $A$ and $D$, or $a_1s_1=a_1$
and $a_2s_0=a_2$ in type $B$.

We deal with the remaining relations on a case by case basis.

\subsection{The Coxeter arrangement monoids of type $A$}
\label{section:popova:arrangement:A}

We have
$\AA=\{a_{ij}:=(v_i-v_j)^\perp\,|\,1\leq i<j\leq n\}$, and write
\begin{equation}
  \label{eq:1}
\aa_{ij}:=\left\{\begin{array}{ll}
(s_{i-1}\ldots s_{1})(s_{j-1}\ldots s_{2})a(s_{2}\ldots s_{j-1})(s_{1}\ldots s_{i-1}),
&\text{for }2\leq i<j\leq n,\\
(s_{j-1}\ldots s_{2})a(s_{2}\ldots s_{j-1})
&\text{for }i=1,2\leq j\leq n,
\end{array}\right.
\end{equation}
with $\aa_{12}:=a_1$.

The isomorphisms
$\cH(A_{n-1})\rightarrow\Pi(n)$ of \S\ref{section:idempotents:coxeter}
and the well known isomorphism 
$W(A_{n-1})\rightarrow\SS_n$, the second written as
$g(\pi)\mapsto\pi$, gives the
$W(A_{n-1})$-action on $\HH(A_{n-1})$ as $X(\Lambda)g(\pi)=X(\Lambda\pi)$,
where $\Lambda\pi=\{\Lambda_1\pi,\ldots,\Lambda_p\pi\}$.
Thus, as
$\SS_n$ acts $4$-fold transitively on $\{1,\ldots,n\}$, 
we take $O_2$ to be  $\{a_{12},a_{34}\}$ and $\{a_{12},a_{23}\}$ when
$n\geq 4$, giving \emph{(Idem2)\/}
relations $a\aa_{34}=\aa_{34}a$ and $a\aa_{23}=\aa_{23}a$. When $n=2$
there is only one idempotent (hence no \emph{(Idem2)} relations) and
when $n=3$ we take $O_2$ to be $\{a_{12},a_{23}\}$.

We have the presentation for $\HH(A_{n-1})$ of \S\ref{section:idempotents:geometric}.
Lemma \ref{presentations:result300} and the triple transitivity of $\SS_n$ on
$\{1,\ldots,n\}$ reduce the (A1) relations to:
$$
\begin{pspicture}(0,0)(14.3,1.25)
\rput(0,0){
\rput(4.5,0.5){\BoxedEPSF{ref.monoids2a.fig19.eps scaled 900}}
\rput(4,1.2){${\scriptstyle 1}$}\rput(5,1.2){${\scriptstyle 3}$}\rput(4.5,-.2){${\scriptstyle 2}$}
\rput(5.75,0.5){$=$}
\rput(7,0.5){\BoxedEPSF{ref.monoids2a.fig20.eps scaled 900}}
\rput(6.5,1.2){${\scriptstyle 1}$}\rput(7.5,1.2){${\scriptstyle 3}$}\rput(7,-.2){${\scriptstyle 2}$}
\rput(8.25,0.5){$=$}
\rput(9.5,0.5){\BoxedEPSF{ref.monoids2a.fig21.eps scaled 900}}
\rput(9,1.2){${\scriptstyle 1}$}\rput(10,1.2){${\scriptstyle 3}$}\rput(9.5,-.2){${\scriptstyle 2}$}
}
\end{pspicture}
$$
that is, $a\aa_{13}=a\aa_{23}=\aa_{13}\aa_{23}$.

As with the Boolean monoids, the \emph{(RefIdem)\/} relations
can be reduced in number. 

\begin{lemma}
\label{popova:reflectionarrangements:result200}
The relations 
$s_i\aa_{{jk}}=\aa_{{jk}}^{s_i}s_i$ for 
$1\leq j<k\leq n$ and $1\leq i\leq n-1$
are implied by the the (Units) relations and the relations $s_ia=\aa_{{12}}^{s_i}s_i$
for $1\leq i\leq n-1$, i.e.: the relations $s_ia=as_i$, $(i\not= 2)$.
\end{lemma}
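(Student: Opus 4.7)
The plan is to adapt the technique used for Lemma \ref{popova:boolean:result100} with one extra layer of conjugation. Formula (\ref{eq:1}) expresses each $\aa_{jk}$ as a braid-conjugate $\omega_{jk}^{-1}\, a\, \omega_{jk}$ for an explicit word $\omega_{jk}$ in the $s_\ell$, so proving $s_i\aa_{jk}=\aa_{jk}^{s_i}s_i$ reduces to showing that one can slide $s_i$ through $\omega_{jk}^{-1}$ and past the central $a$ using only the (Units) braid/commuting relations together with the hypothesis $s_\ell a = a s_\ell$ for $\ell\neq 2$, emerging with $\omega_{j'k'}^{-1}$ to the left of $s_i$, where $\{j',k'\}=\{j,k\}\cdot s_i$.

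First, I would reduce to the case $j=1$. For $2\leq j<k$, the word in (\ref{eq:1}) factors as
$(s_{j-1}\ldots s_1)\bigl[(s_{k-1}\ldots s_2)\,a\,(s_2\ldots s_{k-1})\bigr](s_1\ldots s_{j-1})$,
with the bracketed middle exactly the word for $\aa_{1k}$. Assuming the relation $s_i\aa_{1k}=\aa_{1k}^{s_i}s_i$, one then pushes $s_i$ across the two outer braid blocks by manipulations patterned on cases (i)--(iv) in the proof of Lemma \ref{popova:boolean:result100}, invoking the $j=1$ case at the moment $s_i$ reaches the bracketed piece. The unordered pair of indices transforms exactly under $s_i$ as required.

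Second, for the $j=1$ case I would do a direct case analysis on the position of $i$ relative to $k$, again modelled on cases (i)--(iv) in the proof of Lemma \ref{popova:boolean:result100}. When $i\geq k$, $s_i$ commutes with every generator appearing in $\aa_{1k}$, including $a$, because $i\geq k\geq 3$ and in particular $i\neq 2$; when $1\leq i<k-1$ the braid identity $s_{\ell+1}s_\ell s_{\ell+1}=s_\ell s_{\ell+1}s_\ell$ migrates $s_i$ across the factor $(s_{k-1}\ldots s_2)$ until it sits as some $s_\ell$ adjacent to $a$, at which moment we invoke $s_\ell a = a s_\ell$ and then run the braid moves in reverse; when $i=k-1$, $s_i$ is absorbed by the leftmost letter of the conjugating word via $s_i^2=1$. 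Putting the right-hand side of (\ref{eq:1}) with index $k' = k\cdot s_i$ back together completes the case.

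The main obstacle, exactly as in the Boolean setting, is what happens at $i=2$, where the commutation $s_2 a = a s_2$ is unavailable. The saving grace is already built into the definition (\ref{eq:1}) itself: the target relation $s_2\aa_{12}=\aa_{12}^{s_2}s_2$ has right-hand side $\aa_{13}s_2$, and since $\aa_{13}=s_2 a s_2$ by (\ref{eq:1}), this collapses via $s_2^2=1$ from (Units) to the tautology $s_2 a = s_2 a$. In other words, the apparently missing $i=2$ hypothesis is already encoded in the chosen word for $\aa_{13}$, and the remaining case analysis goes through uniformly.
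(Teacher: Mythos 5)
Your proposal is correct and is precisely the argument the paper has in mind: the text explicitly leaves the proof to the reader as ``a similar but more elaborate version'' of Lemma \ref{popova:boolean:result100}, and your outline --- factoring $\aa_{jk}=(s_{j-1}\ldots s_1)\aa_{1k}(s_1\ldots s_{j-1})$, sliding $s_i$ through the conjugating blocks by braid moves and the commutations $s_\ell a=as_\ell\,(\ell\neq 2)$, absorbing via $s_i^2=1$ in the boundary cases --- is that elaboration. Your observation that the apparently missing $i=2$ case collapses to a tautology because $\aa_{13}=s_2as_2$ is exactly the point that makes the reduction work, mirroring the role of the vacuous $i=1$ relation in the Boolean setting.
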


The proof, which is a similar but more elaborate version of that for Lemma
\ref{popova:boolean:result100},
is left to the reader.
Putting it all together we get

\paragraph{The Coxeter arrangement monoid of type $A_{n-1}$:} 
\label{popova:reflectionarrangements:result250}
$$
\begin{pspicture}(0,0)(14,3)
\rput(0,-.3){
\rput(-.6,.75){
\rput(4,1){\BoxedEPSF{fig2.eps scaled 400}}
\rput(1.65,.6){$s_1$}\rput(3,.6){$s_2$}
\rput(4.9,.6){$s_{n-2}$}\rput(6.35,.6){$s_{n-1}$}
}
\rput(8,1.9){$\begin{array}{rl}
M(A_{n-1},\HH)=\langle s_1,\ldots,s_{n-1},a\,|\,
&(s_is_j)^{m_{ij}}=1,\,a^2=a,\, as_1=a,
\vrule width 0 mm height 0 mm depth 3 mm\\
&s_ia=as_i\,(i\not=2), 
\vrule width 0 mm height 0 mm depth 3 mm\\
&a\aa_{23}=\aa_{23}a,a\aa_{34}=\aa_{34}a, 
\vrule width 0 mm height 0 mm depth 3 mm\\
&a\aa_{13}=a\aa_{23}=\aa_{13}\aa_{23}\rangle.\\
\end{array}$}
}
\end{pspicture}
$$
for $n\geq 4$ and where $\aa_{ij}$ is given by (\ref{eq:1}). We leave
the reader to make the necessary adjustments in the $n=2,3$ cases. See also \cite{Fitzgerald03}.

\begin{remark*}
We saw in \cite{Everitt-Fountain10}*{\S2.2} that the type $A$ Coxeter arrangement monoid 
is isomorphic to the monoid of uniform block permutations (see for example
\cite{Fitzgerald03}):
the elements of this monoid have the form 
$\lf\pi\rf_\Lambda^{}$ with $\pi\in\SS_n$ and
$\Lambda=\{\Lambda_1,\ldots,\Lambda_p\}$ a partition of $\{1,\ldots,n\}$.
We have $\lf\pi\rf_\Lambda^{}=\lf\tau\rf_\Delta^{}$
if and only if $\Lambda=\Delta$ and $\Lambda_i\pi=\Delta_i\tau$ for all $i$,
and product $\lf\pi\rf_\Lambda^{}\lf\tau\rf_\Delta^{}=\lf\pi\tau\rf_\Gamma^{}$,
where $\Gamma=\Lambda\vee\Delta\pi^{-1}$ and 
$\vee$ is the join in the partition lattice $\Pi(n)$.
The units are thus the $\lf\pi\rf_\Lambda^{}$ where all the blocks $\Lambda_i$ have 
size one and the idempotents are the $\lf\ve\rf_\Lambda^{}$ where $\ve$ is the 
identity permutation. The generators in the presentation above
correspond to the uniform block permutations in Figure \ref{fig3}, where the partitions are given 
in red.
\end{remark*}

\begin{figure}
  \centering
\begin{pspicture}(0,0)(14.3,3)
\rput(-.5,0){
\rput(0,0){
\rput(0,0){\qdisk(2,1){.1}
\psframe[linewidth=.3mm,linecolor=red](1.6,.5)(2.4,1.5)\rput(2,.75){$1$}
\psbezier[showpoints=false,linewidth=.3mm]{->}(1.8,1.2)(1.3,2)(2.7,2)(2.2,1.2)
}
\rput(2.75,1){$\ldots$}
\rput(1.5,0){\qdisk(2,1){.1}
\psframe[linewidth=.3mm,linecolor=red](1.6,.5)(2.4,1.5)
\psbezier[showpoints=false,linewidth=.3mm]{->}(1.8,1.2)(1.3,2)(2.7,2)(2.2,1.2)
}
\qdisk(4.5,1){.1}\rput(4.5,.75){$i$}
\psframe[linewidth=.3mm,linecolor=red](4.1,.5)(4.9,1.5)
\psbezier[showpoints=false,linewidth=.3mm]{<->}(4.5,1.2)(4.5,2)(5.5,2)(5.5,1.2)
\qdisk(5.5,1){.1}\rput(5.5,.75){$i+1$}
\psframe[linewidth=.3mm,linecolor=red](5.1,.5)(5.9,1.5)
\rput(5,2.25){$s_i\,(1\leq i<n-1)$}
\rput(4.5,0){\qdisk(2,1){.1}
\psframe[linewidth=.3mm,linecolor=red](1.6,.5)(2.4,1.5)
\psbezier[showpoints=false,linewidth=.3mm]{->}(1.8,1.2)(1.3,2)(2.7,2)(2.2,1.2)
}
\rput(7.25,1){$\ldots$}
\rput(6,0){\qdisk(2,1){.1}\rput(2,.75){$n$}
\psframe[linewidth=.3mm,linecolor=red](1.6,.5)(2.4,1.5)
\psbezier[showpoints=false,linewidth=.3mm]{->}(1.8,1.2)(1.3,2)(2.7,2)(2.2,1.2)
}}
\rput(8,0){
\rput(0,0){\qdisk(2,1){.1}\rput(2,.75){$1$}
\psframe[linewidth=.3mm,linecolor=red](1.6,.5)(3.4,1.5)
\psbezier[showpoints=false,linewidth=.3mm]{->}(1.8,1.2)(1.3,2)(2.7,2)(2.2,1.2)
}
\rput(1,0){\qdisk(2,1){.1}\rput(2,.75){$2$}
\psbezier[showpoints=false,linewidth=.3mm]{->}(1.8,1.2)(1.3,2)(2.7,2)(2.2,1.2)
}
\rput(2,0){\qdisk(2,1){.1}\rput(2,.75){$3$}
\psframe[linewidth=.3mm,linecolor=red](1.6,.5)(2.4,1.5)
\psbezier[showpoints=false,linewidth=.3mm]{->}(1.8,1.2)(1.3,2)(2.7,2)(2.2,1.2)
}
\rput(4.75,1){$\ldots$}\rput(3.5,2.25){$a$}
\rput(3.5,0){\qdisk(2,1){.1}\rput(2,.75){$n$}
\psframe[linewidth=.3mm,linecolor=red](1.6,.5)(2.4,1.5)
\psbezier[showpoints=false,linewidth=.3mm]{->}(1.8,1.2)(1.3,2)(2.7,2)(2.2,1.2)
}}}
\end{pspicture}
  \caption{Type $A$ Coxeter arrangement monoid generators as uniform block permutations.}
  \label{fig3}
\end{figure}
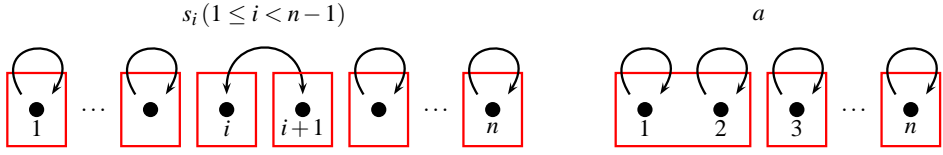

\subsection{The Coxeter arrangement monoids of type $B$}
\label{section:popova:arrangement:B}

As in \S\ref{section:popova:arrangement:A} we work with $n\geq 4$ and
leave the simpler cases $n=2,3$ to the reader.
We have $\cA$ the $a_{ij}$ from \S\ref{section:popova:arrangement:A} together with
$$
\{d_{ij}:=(v_i+v_j)^\perp\,|\,1\leq i<j\leq n\}
\text{ and }\{e_i:=v_i^\perp\,|\,1\leq i\leq n\}.
$$
Let $\aa_{ij}$ be the expression defined in (\ref{eq:1}), except with
$a_1$ instead of $a$, and
\begin{equation}
  \label{eq:2}
\delta_{ij}=
\left\{\begin{array}{ll}
(s_{i-1}\ldots s_{1})(s_{j-1}\ldots s_{2})s_0a_1 s_0(s_{2}\ldots s_{j-1})
(s_{1}\ldots s_{i-1}),
&2\leq i<j\leq n,\\
(s_{j-1}\ldots s_{2})s_0a_1 s_0(s_{2}\ldots s_{j-1}),
&i=1,2<j\leq n,\\
\end{array}\right.   
\end{equation}
with $\delta_{12}:=s_0a_1s_0$; and 
\begin{equation}
  \label{eq:3}
\ve_i:=(s_{i-1}\ldots s_{1})a_2(s_{1}\ldots s_{i-1}).  
\end{equation}
for $i>1$ with $\ve_1:=a_2$.
One can build a combinatorial model for the action of $W(B_n)$ on
$\cH(B_n)$ much as in \S\ref{section:popova:arrangement:A}:
the isomorphism $\cH(B_n)\rightarrow\TT$ of \S\ref{section:idempotents:geometric}
and the well known isomorphism 
$W(B_n)\rightarrow\SS_n\ltimes\mathbf{2}^n$ 
(see \cite{Everitt-Fountain10}*{\S6.2} for notation)
give the $W(B_n)$-action on $\cH(B_n)$ as $X(\Delta,\Lambda)g(\pi,T)$ $=X(\Delta\pi,
\Lambda^T\pi)$.
One deduces from this that $O_2=\{\{a_1,a_{34}\}, \{a_1,a_{23}\},
\{a_1,d_{12}\},\{a_2,a_{23}\}, \{a_1,a_2\}$
$\{a_2,e_2\}\}$,
and hence that \emph{(Idem2)\/} relations are 
$$
a_1\aa_{34}=\aa_{34}a_1,
a_1\aa_{23}=\aa_{23}a_1,
a_1\delta_{12}=\delta_{12}a_1,
a_2\aa_{23}=\aa_{23}a_2,
a_1a_2=a_2a_1
\text{ and }
a_2\ve_2=\ve_2a_2.
$$
We have the presentation for $\cH(B_n)$ of \S\ref{section:idempotents:geometric}, hence the 
relations $a_1\aa_{13}=a_1\aa_{23}=\aa_{13}\aa_{23}$ of
\S\ref{section:popova:arrangement:A}; the family (B2) reduces to:
$$
\begin{pspicture}(0,0)(14.3,1.25)
\rput(0,0){
\rput(4.75,0.5){\BoxedEPSF{ref.monoids2a.fig22.eps scaled 900}}
\rput(4.25,1.2){${\scriptstyle 1}$}\rput(5.25,1.2){${\scriptstyle 3}$}\rput(4.75,-.2){${\scriptstyle 2}$}
\rput(5.75,0.5){$=$}
\rput(7,0.5){\BoxedEPSF{ref.monoids2a.fig23.eps scaled 900}}
\rput(6.5,1.2){${\scriptstyle 1}$}\rput(7.5,1.2){${\scriptstyle 3}$}\rput(7,-.2){${\scriptstyle 2}$}
\rput(8.25,0.5){$=$}
\rput(9.25,0.5){\BoxedEPSF{ref.monoids2a.fig24.eps scaled 900}}
\rput(8.75,1.2){${\scriptstyle 1}$}\rput(9.75,1.2){${\scriptstyle 3}$}\rput(9.25,-.2){${\scriptstyle 2}$}
}
\end{pspicture}
$$
that is, 
$a_1\delta_{13}=\delta_{13}\delta_{23}=a_1\delta_{23}$;
families (B3) and (B4) become,
$$
\begin{pspicture}(0,0)(14.3,1.25)
\rput(-3,0){
\rput(4.75,0.5){\BoxedEPSF{ref.monoids2a.fig25.eps scaled 900}}
\rput(4.2,0){${\scriptstyle 1}$}\rput(5.2,0){${\scriptstyle 2}$}
\rput(5.75,0.5){$=$}
\rput(7,0.5){\BoxedEPSF{ref.monoids2a.fig26.eps scaled 900}}
\rput(6.5,0){${\scriptstyle 1}$}\rput(7.5,0){${\scriptstyle 2}$}
\rput(8.25,0.5){$=$}
\rput(9.25,0.5){\BoxedEPSF{ref.monoids2a.fig27.eps scaled 900}}
\rput(8.7,0){${\scriptstyle 1}$}\rput(9.7,0){${\scriptstyle 2}$}
}
\rput(5,0.25){
\rput(4.5,0.3){\BoxedEPSF{ref.monoids2a.fig28.eps scaled 900}}
\rput(4,1){${\scriptstyle 1}$}\rput(5,1){${\scriptstyle 3}$}\rput(4.5,-.4){${\scriptstyle 2}$}
\rput(5.75,0.5){$=$}
\rput(7,0.5){\BoxedEPSF{ref.monoids2a.fig29.eps scaled 900}}
\rput(6.5,.45){${\scriptstyle 1}$}\rput(7.5,.45){${\scriptstyle 3}$}\rput(7,-.4){${\scriptstyle 2}$}
}
\end{pspicture}
$$
or $a_1a_2=\ve_2a_2=\delta_{12}a_2$
and
$a_1\aa_{23}\delta_{13}=a_2\ve_2\ve_3$.
Finally, there is the thinning of the (RefIdem) relations:

\begin{lemma}
\label{popova:reflectionarrangements:result300}
The (RefIdem) relations can be deduced from the (Units) relations 
and the relations
$s_ia_1=a_1s_i\, (i\not=0,2)$, 
$s_ia_2=a_2s_i\,(i\not=1)$, 
$s_0\aa_{2j}=\aa_{2j}s_0\,(j>2)$,
$s_0\delta_{2j}=\delta_{2j}s_0\,(j>2)$,
$s_1\delta_{12}=\delta_{12}s_1$
and $s_0\ve_2=\ve_2s_0$.
\end{lemma}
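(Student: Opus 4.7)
The proof follows the template of Lemmas \ref{popova:boolean:result100} and \ref{popova:boolean:result200}, handling the three atom families $\aa_{ij}$, $\delta_{ij}$, $\ve_j$ in turn and, within each, separating the action of $s_0$ from that of $s_k$ with $k \ge 1$. For every pair (generator, atom) the strategy is to expand the atom using its defining formula (\ref{eq:1})--(\ref{eq:3}) as a conjugate of $a_1$ or $a_2$ by a word in the Coxeter generators, and then use the braid relations from (Units) together with the listed commutations to slide the generator across the conjugating word, producing the prescribed word $\alpha^{s}s$. A useful preliminary observation is that the (RefIdem) instances $s_2 a_1 = \aa_{13} s_2$, $s_1 a_2 = \ve_2 s_1$, $s_0 a_1 = \delta_{12} s_0$ and $s_0 \delta_{12} = a_1 s_0$ are tautologies modulo $s_i^2 = 1$, since $\aa_{13}$, $\ve_2$ and $\delta_{12}$ are by construction the relevant conjugates.

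For $s_k$ with $k \ge 1$, the action on $\aa_{ij}$ and $\ve_j$ is dispatched by replaying the proof of Lemma \ref{popova:boolean:result100} verbatim: the conjugating words lie in $\langle s_1,\ldots,s_{n-1}\rangle$, and the listed commutations $s_i a_1 = a_1 s_i\,(i \neq 0,2)$ and $s_i a_2 = a_2 s_i\,(i\neq 1)$ play exactly the role of the Boolean relation $s_i a = a s_i\,(i\neq 1)$. For $s_k \delta_{ij}$ the same sliding argument applies up to the moment of moving $s_k$ past the inner block $s_0 a_1 s_0$: for $k\ge 2$ one commutes $s_k$ through each $s_0$ (a (Units) relation) and is thereby reduced to the already-handled $s_k a_1$; for $k=1$ the only non-trivial input is the listed $s_1 \delta_{12} = \delta_{12} s_1$.

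The cases involving $s_0$ are handled inductively via the syntactic identities $\ve_j = s_{j-1} \ve_{j-1} s_{j-1}$ for $j\ge 2$, $\aa_{ij} = s_{i-1} \aa_{i-1,j} s_{i-1}$ for $i\ge 3$, and $\delta_{ij} = s_{i-1} \delta_{i-1,j} s_{i-1}$ for $i\ge 3$, each of which holds on the nose from (\ref{eq:1})--(\ref{eq:3}). Since $s_0$ commutes with $s_{j-1}$ for $j\ge 3$ and with $s_{i-1}$ for $i\ge 3$ (both (Units) relations), one peel-off step reduces each case to its immediate predecessor, terminating in the base cases $s_0 \ve_2 = \ve_2 s_0$, $s_0 \aa_{2j} = \aa_{2j} s_0$ and $s_0 \delta_{2j} = \delta_{2j} s_0$ (all listed), together with $s_0 a_2 = a_2 s_0$ for the base $\ve_1$. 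The residual cases $s_0 \aa_{1j} = \delta_{1j} s_0$ and $s_0 \delta_{1j} = \aa_{1j} s_0$ follow from (Units) alone: insert $s_0^2 = 1$ adjacent to $a_1$ and commute $s_0$ past $s_2,\ldots,s_{j-1}$.

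The main obstacle is sheer bookkeeping: unlike the Boolean case there are three atom families to track, and because $\delta_{ij}$ contains an internal $s_0$ one must split according to whether $k = 0$, $1$, $2$ or $k\ge 3$ when moving $s_k$ past it. Once the inductive framework above is in place, however, each case collapses in a few lines to (Units) or to one of the listed commutations, and this yields the lemma.
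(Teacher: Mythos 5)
The paper states this lemma with no proof at all, the intention clearly being the same sliding-and-induction technique as Lemmas \ref{popova:boolean:result100} and \ref{popova:boolean:result200}, and your proposal carries out exactly that intended argument with the correct inventory of inputs (the tautological instances, the Boolean/type-$A$ argument for $s_k$ with $k\geq 1$, the peel-off induction for $s_0$ terminating in the listed base relations, and the direct $s_0^2=1$ trick for $s_0\aa_{1j}$ and $s_0\delta_{1j}$), so it is correct and matches the paper's approach. One small bookkeeping imprecision: after the braid moves the generator arriving at the inner block $s_0a_1s_0$ need not be $s_k$ itself — for instance $s_2\delta_{23}=s_2s_1s_2s_0a_1s_0s_2s_1$ braids to put $s_1$ against the core, so the listed $s_1\delta_{12}=\delta_{12}s_1$ (rather than commutation of $s_k$ through $s_0$) is what is actually used even though $k=2$ — but since that relation is among your inputs this does not affect the validity of the argument.
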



We thus have our presentation:

\paragraph{The Coxeter arrangement monoid of type $B_n$:} 
$$
\begin{pspicture}(0,0)(14,4.5)
\rput(-0.1,1.8){
\rput(-1.8,-.3){
\rput(4,1){\BoxedEPSF{fig2.eps scaled 350}}
\rput(1.9,.6){$s_0$}\rput(3.2,.6){$s_1$}
\rput(4.9,.6){$s_{n-2}$}\rput(6.1,.6){$s_{n-1}$}
\rput(2.5,1.25){$4$}
}
\rput(7.3,.4){$\begin{array}{rl}
M(B_n,\HH)=\langle s_0,\ldots,s_{n-1},a_1,a_2\,|\,
&(s_is_j)^{m_{ij}}=1,\,a_j^2=a_j,a_1s_1=a_1,a_2s_0=a_2,
\vrule width 0 mm height 0 mm depth 3 mm\\
&s_ia_1=a_1s_i\,(i\not=0,2),s_ia_2=a_2s_i\,(i\not=1),
\vrule width 0 mm height 0 mm depth 3 mm\\
&s_0\aa_{2j}=\aa_{2j}s_0,\,(j>2),
s_0\delta_{2j}=\delta_{2j}s_0,\,(j>2),
\vrule width 0 mm height 0 mm depth 3 mm\\
&s_1\delta_{12}=\delta_{12}s_1,s_0\ve_2=\ve_2s_0, a_j\aa_{23}=\aa_{23}a_j,
\vrule width 0 mm height 0 mm depth 3 mm\\
&a_1a_2=a_2a_1=a_2\ve_2=\ve_2a_2=\delta_{12}a_2, a_1\delta_{12}=\delta_{12}a_1,
\vrule width 0 mm height 0 mm depth 3 mm\\
&a_1\aa_{13}=a_1\aa_{23}=\aa_{13}\aa_{23},a_1\aa_{34}=\aa_{34}a_1,
\vrule width 0 mm height 0 mm depth 3 mm\\
&a_1\delta_{13}=\delta_{13}\delta_{23}=a_1\delta_{23},
a_1\aa_{23}\delta_{13}=a_2\ve_2\ve_3\rangle.
\vrule width 0 mm height 0 mm depth 3 mm\\
\end{array}$}}
\end{pspicture}
$$
where $\aa_{ij}, \delta_{ij}$ and $\ve_i$ are given by (\ref{eq:1})-(\ref{eq:3}).

\begin{remark*}
Just as the type $A$ Coxeter arrangement monoid is isomorphic to the 
monoid of uniform block permutations, so the type $B$ reflection monoid is
isomorphic to the monoid of ``uniform block signed permutations''.
See \cite{Everitt-Fountain10}*{\S6.2} for details.
\end{remark*}

\subsection{The Coxeter arrangement monoids of type $D$}
\label{section:popova:arrangement:D}

The $\cA$ are the $a_{ij}$ and $d_{ij}$ of \S\ref{section:popova:arrangement:B};
let $\aa_{ij}$ be defined as in (\ref{eq:1}) and 
\begin{equation}
  \label{eq:4}
\delta_{ij}=
\left\{\begin{array}{ll}
(s_{i-1}\ldots s_{1})(s_{j-1}\ldots s_{2})g^{-1}a 
g(s_{2}\ldots s_{j-1})(s_{1}\ldots s_{i-1}),
&2\leq i<j\leq n,\\
(s_{j-1}\ldots s_{2})g^{-1}ag(s_{2}\ldots s_{j-2}),
&i=1,2<j\leq n,\\
\end{array}\right.   
\end{equation}
with $g=s_{2}s_{1}s_0s_2$ and $\delta_{12}:=g^{-1}ag$.

There is a combinatorial model for the action of $W(D_n)$ on
$\HH(D_n)$, much as with types $A$ and $B$.
We refer the reader to
\cite{Everitt-Fountain10}*{\S6.2} or \cite{Orlik92}*{\S6.4} for details,
noting that 
$O_2=\{\{a,a_{34}\}, \{a,$ $a_{23}\}, \{a,d_{12}\}\}$ 
when $n>4$,
while for $n=4$ we have
$\{a,d_{34}\}$ as well.
The \emph{(Idem2)\/}
relations are thus
$$
a\aa_{34}=\aa_{34}a,
a\aa_{23}=\aa_{23}a,
a\delta_{12}=\delta_{12}a,
$$
together with $a\delta_{34}=\delta_{34}$ when $n=4$.

The presentation for $\HH(D_n)$ of \S\ref{section:idempotents:geometric} together with 
Lemma \ref{presentations:result300} give the relations
$a\aa_{13}=a\aa_{23}=\aa_{13}\aa_{23}$
and 
$a\delta_{13}=\delta_{13}\delta_{23}=a\delta_{23}$
of \S\ref{section:popova:arrangement:B}, as well as 
$$
a\aa_{23}\delta_{23}=a\delta_{12}\delta_{23}
=a\aa_{23}\delta_{12}\delta_{23},
$$
$a\aa_{23}\delta_{13}=a\aa_{13}\aa_{23}\delta_{13}$
and 
$$
a\aa_{23}\aa_{34}\delta_{12}\delta_{23}\delta_{34}
=a\aa_{34}\delta_{12}\delta_{34}.
$$

Finally,
the \emph{(RefIdem)\/} relations can be deduced from the relations
$s_ia=as_i\, (i\not=2)$,
$s_0\aa_{3k}=\aa_{3k}s_0\,(k>3)$,
$s_0\delta_{3k}=\delta_{3k}s_0\,(k>3)$ and $s_3\delta_{12}=\delta_{12}s_3$.
All of which leads us to:

\paragraph{The arrangement monoid of type $D_n\,(n>4)$:}
$$
\begin{pspicture}(0,0)(14,4)
\rput(-0.4,1.9){
\rput(-1.5,-1.2){
\rput(4,1){\BoxedEPSF{fig3a.eps scaled 350}}
\rput(2.1,-0.25){$s_0$}\rput(2.1,2.25){$s_{1}$}\rput(3,.6){$s_{2}$}
\rput(4.85,.6){$s_{n-2}$}\rput(6.25,.6){$s_{n-1}$}
}
\rput(7.7,0.2){$\begin{array}{rl}
M(D_n,\HH)=\langle s_0,\ldots,s_{n-1},a\,|\,
&(s_is_j)^{m_{ij}}=1,\,a^2=a,as_1=a,\,
s_ia=as_i\,(i\not=2),
\vrule width 0 mm height 0 mm depth 3 mm\\
&s_0\aa_{3j}=\aa_{3j}s_0, s_0\delta_{3j}=\delta_{3j}s_0,\,(\text{both }j>3),
\vrule width 0 mm height 0 mm depth 3 mm\\
&s_3\delta_{12}=\delta_{12}s_3,\,a\aa_{34}=\aa_{34}a,
a\delta_{12}=\delta_{12}a,
\vrule width 0 mm height 0 mm depth 3 mm\\
&a\aa_{13}=a\aa_{23}=\aa_{23}a=\aa_{13}\aa_{23},
a\delta_{13}=\delta_{13}\delta_{23}=a\delta_{23},
\vrule width 0 mm height 0 mm depth 3 mm\\
&a\aa_{23}\delta_{23}=a\delta_{12}\delta_{23}
=a\aa_{23}\delta_{12}\delta_{23}, 
a\aa_{23}\delta_{13}=a\aa_{13}\aa_{23}\delta_{13},
\vrule width 0 mm height 0 mm depth 3 mm\\
&a\aa_{23}\aa_{34}\delta_{12}\delta_{23}\delta_{34}
=a\aa_{34}\delta_{12}\delta_{34}\rangle.
\vrule width 0 mm height 0 mm depth 3 mm\\
\end{array}$}}
\end{pspicture}
$$
together with $a\delta_{34}=\delta_{34}a$ when $n=4$, and
where $\aa_{ij}$ and $\delta_{ij}$ are given by (\ref{eq:1}) and (\ref{eq:4}).

\section{Renner monoids}
\label{section:renner}

\subsection{Generalities}
\label{section:renner:general}

The
principal objects of study in this section are algebraic monoids: affine algebraic
varieties that carry the structure of a monoid. The theory builds on
that of linear algebraic groups, and there are many parallels
between the two.  Standard references for both the groups and the
monoids are \cite{Borel91,Humphreys90,Humphreys75,Putcha88,Renner05}. 
The beginner should start with the survey \cite{Solomon95}.

Much of the structure of an algebraic group is encoded by the Weyl
group $W$. The analogous role is played for algebraic monoids by the
Renner monoid $R$. It turns out that the Renner monoid can be realized
as a monoid $M(W,\cS)$ of partial permutations. Moreover, the system
$\cS$ is isomorphic (as a $\vee$-semilattice with $\0$) to the face
lattice $\FF(P)$ of a convex polytope $P$. We use these facts to
obtain presentations for Renner monoids. Very different presentations
have been found by Godelle \cite{Godelle10} (see also \cite{Godelle10a}) using a completely
different approach.

We start by establishing notation from algebraic groups and monoids.
Let $k=\ov{k}$ be an algebraically closed field and $M$ an irreducible algebraic monoid over 
$k$. We will assume throughout that $M$ has a $0$. Let $G$ be the group of units, and assume
that $G$ is a reductive algebraic group. In particular, $M$ is reductive. 

All the examples in this paper will arise via the following construction.
Let $G_0$ be a connected semisimple algebraic group and 
$\rho:G_0\rightarrow GL(V)$ a rational representation with finite kernel. 
Let $M=M(G_0,\rho):=\ov{k^\times \rho(G_0)}$, where $k^\times=k\setminus\{0\}$.
Then $M$ is a reductive irreducible algebraic monoid with $0$ and units $G:=k^\times\rho(G_0)$--see 
\cite{Solomon95}*{\S2}. If $G_0\subset GL_n$ is a classical algebraic group and 
$\rho:G_0\hookrightarrow GL_n$ is the natural representation
then we call the resulting $M$ a \emph{classical algebraic
monoid\/}. Thus, if $G_0=\sl_n, \so_n$ 
and $\sp_n$, we have the general linear monoid
$\m_n=\ov{k^\times\sl_n}$ (all $n\times n$ matrices over $k$), the orthogonal monoids
$\mso_n=\ov{k^\times\so_n}$ and the symplectic monoids 
$\msp_n=\ov{k^\times\sp_n}$. 

Returning to generalities, let $T\subset G$ be a maximal torus and $\ov{T}\subset M$
its (Zariski) closure. Let $\XXX(T)=\text{Hom}(T,k^\times)$ be the character group
and $\XXX:=\XXX(T)\otimes\R$. 
Then $\XXX(T)$ is a free $\Z$-module
with rank equal to $\dim T$.
In the construction above, if $T_0\subset G_0$ 
is a maximal torus then $T=k^\times\rho(T_0)\subset G$ is a maximal torus with
$\dim T=\dim T_0+1$. If $v\in\XXX(T_0)$ then the map 
$t\rho(t')\mapsto v(t'),\,(t\in k^\times,t'\in T)$ is a character in $\XXX(T)$,
and so we can identify 
$\XXX(T_0)$ with a submodule of $\XXX(T)$ with
$\rk_\Z\XXX(T)=\rk_\Z\XXX(T_0)+1$. If
$\XXX_0=\XXX(T_0)\otimes\R\subset\XXX$ then $\dim\XXX=\dim\XXX_0+1$. 

Let $\Phi=\Phi(G,T)\subset\XXX(T)$ be the root system 
determined by $T$. If $\Phi(G_0,T_0)$ is the system for $(G_0,T_0)$ above, 
then by \cite{Solomon95}*{\S 2} or \cite{Springer98}*{Chapter 7}
the character $t\rho(t')\mapsto v(t'),\,(t\in k^\times,t'\in T)$ is a root in
$\Phi(G,T)$. Thus we can identify $\Phi(G_0,T_0)$ with a subset of $\Phi(G,T)$ where
$|\Phi(G,T)|=\dim G-\dim T=(\dim G_0+1)-(\dim T_0+1)=|\Phi(G_0,T_0)|$. 
In particular, we can identify the root systems of $G_0$ and $G$. The
roots systems for the examples considered in this section are given in Table 
\ref{table:classical}.

If $v\in\Phi$ let $s_v$ be the reflection of $\XXX$ in $v$ and
$W(\Phi)=\langle s_v\,|\,v\in\Phi\rangle$ the resulting reflection
group. Let $\Delta\subset\Phi$ be a simple
system (determined by the choice of a Borel subgroup $T\subset B$) 
so that $W(\Phi)$ is a Coxeter system $(W,S)$ with $S$ the
reflections $s_v$ in the simple roots $v\in\Delta$. Let
$W(G,T)=N(T)/T$ be the Weyl group. If $w\in W$ and $\ov{w}\in G$
with $w=\ov{w} T$ then we will abuse notation throughout and write
$w^{-1}tw$ rather than $\ov{w}^{-1}t\ov{w}$. In particular, $W$
acts faithfully on $\XXX$ via 
$v^w(t)=v(w^{-1}tw)$, realizing an injection
$W\hookrightarrow GL(\XXX)$ and an isomorphism $W(G,T)\cong
W(\Phi)$. We will identify these two groups in what follows and just
write $W$ for both. If $G=k^\times\rho(G_0)$ we
identify the Weyl groups $W(G_0,T_0)$ and $W(G,T)$ via the
identifications of their root systems. 

We will also have need for the duals of these notions:
let $\XXX^\vee(T)=\text{Hom}(k^\times,T)$ be the cocharacter group of 
$T$ (i.e.: $1$-parameter subgroups of $T$). The Weyl group acts on
$\XXX^\vee(T)$ via $\lambda\mapsto\lambda w$ where
$(\lambda w)(t)=w^{-1}\lambda(t) w$ for $t\in k^\times$. If 
$\langle\cdot,\cdot\rangle:\XXX(T)\times\XXX^\vee(T)\rightarrow\Z$ is the natural
pairing, then the coroots are the
$\Phi^\vee=\{v^\vee\in\XXX^\vee(T)\,|\,v\in\Phi\text{ and }\langle v,v^\vee\rangle=2\}$ 
and the simple coroots are the $\Delta^\vee=\{v^\vee\,|\,v\in\Delta\}$.

Let $E(\ov{T})$ be the idempotents in $\ov{T}$. Thus, $E(\ov{T})$ is a finite commutative 
monoid of idempotents, and we adopt the partial order of 
\S\ref{section:idempotents:generalities}. The resulting poset is actually a 
graded atomic lattice with $\rk(e)=\dim T-\dim Te$ and atoms 
$A=\{e\in E(\ov{T})\,|\,\dim Te=\dim T-1\}$. 
The Weyl group $W$ acts faithfully on $E(\ov{T})$ via $e\mapsto w^{-1}ew$, giving an injection
$W\hookrightarrow \SS_{E(\ov{T})}$. This action preserves the partial order and
the grading, so in particular restricts to the
atoms $A$.

\begin{table}
\centering
\begin{tabular}{lccc}
\hline
$M$&$G_0$&$\Phi$&polytope $P$\\
\hline
general linear $\m_{n}$&$\sl_{n}$&$A_{n-1}$&$\Delta^n$\\
special orthogonal $\mso_{2\ell+1}$&$\so_{2\ell+1}$&$B_\ell$&$\Diamond^\ell$\\
symplectic $\msp_{2\ell}$&$\sp_{2\ell}$&$C_\ell$&$\Diamond^\ell$\\
special orthgonal $\mso_{2\ell}$&$\so_{2\ell}$&$D_\ell$&$\Diamond^\ell$\\
Solomon's example \S\ref{section:renner:permutohedron}&$\sl_{n}$&$A_{n-1}$&$(n-1)$-permutohedron\\
\hline
\end{tabular}
\caption{Basic data for the algebraic monoids considered in \S\ref{section:renner}.}
\label{table:classical}
\end{table}

It turns out that there is a convex polytope $P$ (see \S\ref{section:idempotents:polytopes})
with face lattice $\FF(P)$ 
isomorphic to the lattice $E$. We describe, following \cite{Solomon95}*{\S5}, how this polytope comes
about in the situation that 
$M=\ov{k^\times \rho(G_0)}$ for $\rho:G_0\rightarrow GL(V)$.
Let $m=\dim V$, $\ell=\dim T_0$ and 
$\Phi_0=\Phi(G_0,T_0)$ with simple roots $\Delta_0=\{v_1,\ldots,v_\ell\}$ 
and simple coroots $\Delta_0^\vee$.
For each $i=1,\ldots,\ell$  and simple coroot $v_i^\vee$, let
$\chi^\vee_i:=\rho v_i^\vee\in\XXX^\vee(T)$. We can write 
\begin{equation}
  \label{eq:24}
\chi^\vee_i(t):=\chi(\mathbf{a}_i)^\vee(t)=
\diag(t^{a_{i1}},\ldots,t^{a_{im}})  
\end{equation}
with the $\mathbf{a}_i=(a_{i1},\ldots,a_{im})\in\Z^m$. 
Let $\R^\ell$ be the space of column vectors and 
$P$ the convex hull in $\R^\ell$ of the $m$
vectors $(a_{1j},\ldots,a_{\ell j})^T$. Thus, if $A$ is
the $\ell\times m$ matrix with rows the $\mathbf{a}_i$ then $P$ is the
convex hull in $\R^\ell$ of the columns. 

If $f\in\FF(P)$ is a face of $P$ then define $e_f:=\sum_j E_{jj}$, the sum over
those $1\leq j\leq m$ such that $(a_{1j},\ldots,a_{\ell j})^T\in f$, and with
$E_{ij}$ the matrix with $1$ in row $i$, column $j$, and $0$'s elsewhere.
Then the map
$\zeta:\FF(P)\rightarrow E(\ov{T})$ given by $\zeta(f)=e_f$ is an isomorphism of posets.
The $P$ for the examples of this section are given in Table
\ref{table:classical} (these will be justified later). Actually, even more is true. 
The Weyl group acts on $\XXX^\vee(T)$ via $(\lambda w)(t)=\rho(w)^{-1}\lambda(t)\rho(w)$
so that $\chi(\mathbf{a}_i)^\vee w=\chi(\mathbf{b}_i)^\vee$, with 
$\mathbf{b}_i=(b_{i1},\ldots,b_{im})$ a permutation of $\mathbf{a}_i$.
In particular, $W$ permutes the vertices of $P$ inducing an action of $W$ on
$\FF(P)$. Then the poset isomorphism $\zeta:\FF(P)\rightarrow E(\ov{T})$ is 
equivariant with respect to the Weyl group actions on $\FF(P)$ and $E(\ov{T})$. 

Let $R=\overline{N_G(T)}/T$ be the Renner monoid of $M$--a finite factorizable inverse
monoid with units $W$ and idempotents
$E(\ov{T})$. It turns out that $R$ is \emph{not\/} in general a
reflection monoid, although it is the image of a reflection monoid
with units $W$ and system of subspaces in $\XXX$ (see \cite{Everitt-Fountain10}*{Theorem
8.1}).

For us the Renner monoid will be a monoid of partial permutations using the construction
described at the end of \S\ref{section:reflection.monoids}. To see why 
we will need a result from \cite{Everitt-Fountain10}
which we restate here in abbreviated form:

\begin{proposition}[\cite{Everitt-Fountain10}*{Proposition 2.1}]
\label{section:reflectionmonoids:result200}
Let $M=EG$ and $N=FH$  be factorizable inverse monoids, and 
$\theta:G\rightarrow H$ and $\zeta:E\rightarrow F$ isomorphisms,
such that
\begin{itemize}
\item $\zeta$ is equivariant: $(geg^{-1})\zeta=(g\theta)(e\zeta)(g\theta)^{-1}$ for all
$g\in G$ and $e\in E$, and 
\item $\theta$ respects stablizers: $G_e\theta=H_{e\zeta}$ for all $e\in E$.
\end{itemize}
Then the map $\varphi:M\rightarrow N$ given by
$(eg)\varphi=(e\zeta)(g\theta)$ is an isomorphism.
\end{proposition}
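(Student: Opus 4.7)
The plan is to verify, in turn, that $\varphi$ is well-defined, a monoid homomorphism, and bijective. The linchpin of the argument is a uniqueness-of-factorization criterion for a factorizable inverse monoid $M = EG$: namely,
\begin{equation*}
eg = e'g' \iff e = e' \text{ and } gg'^{-1} \in G_e.
\end{equation*}
I would establish this first. For the forward direction, compute $(eg)(eg)^{-1} = eg\cdot g^{-1}e = e$ (using that $(eg)^{-1} = g^{-1}e$ since $g$ is a unit and $e$ is a self-inverse idempotent), so $e = e'$; then multiply $eg = eg'$ on the right by $g'^{-1}$ to obtain $egg'^{-1} = e$, i.e.\ $gg'^{-1} \in G_e$. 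The converse runs the same manipulation backwards. Note that the $G_e$ appearing here is precisely the right-multiplicative stabilizer $\{g : eg = e\}$ fixed by the paper, so the bookkeeping with the hypotheses is consistent.

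With the criterion in hand, well-definedness of $\varphi$ becomes exactly the statement that the hypotheses transport the criterion from $M$ to $N$. Suppose $eg = e'g'$, so $e = e'$ and $gg'^{-1} \in G_e$. Applying $\zeta$ and $\theta$: $e\zeta = e'\zeta$ in $F$, and
\begin{equation*}
(g\theta)(g'\theta)^{-1} = (gg'^{-1})\theta \in G_e\theta = H_{e\zeta}
\end{equation*}
by the stabilizer hypothesis. The criterion applied in $N$ then yields $(e\zeta)(g\theta) = (e'\zeta)(g'\theta)$. Injectivity of $\varphi$ is the identical argument run backwards, using $\theta^{-1}$ and $\zeta^{-1}$ together with the reverse containment $H_{e\zeta}\theta^{-1} = G_e$; surjectivity is immediate since $N = FH$ and both $\zeta$ and $\theta$ are onto.

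For the homomorphism property, I would use the standard ``semidirect-style'' expansion in $M$,
\begin{equation*}
(eg)(e'g') = e\cdot(ge'g^{-1})\cdot gg',
\end{equation*}
where $ge'g^{-1} \in E$ because the unit group of a factorizable inverse monoid normalizes the idempotent semilattice. Applying $\varphi$ and then the equivariance hypothesis $(ge'g^{-1})\zeta = (g\theta)(e'\zeta)(g\theta)^{-1}$, the factor $(g\theta)(g\theta)^{-1}$ collapses and one is left with $(e\zeta)(g\theta)(e'\zeta)(g'\theta) = \varphi(eg)\varphi(e'g')$. The only real obstacle is the preliminary step of carefully isolating the uniqueness-of-factorization criterion and matching it to the paper's convention for $G_e$; thereafter, the two hypotheses are precisely calibrated for the two distinct tasks (stabilizer-respect for well-definedness and injectivity, equivariance for multiplicativity) and the remainder is direct computation.
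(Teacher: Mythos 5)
Your proof is correct and complete: the factorization-uniqueness criterion $eg=e'g'\iff e=e'$ and $gg'^{-1}\in G_e$ is established properly, the stabilizer hypothesis is used exactly where it is needed (well-definedness and injectivity), and the equivariance hypothesis exactly where it is needed (multiplicativity, via the expansion $(eg)(e'g')=e(ge'g^{-1})gg'$). The present paper does not actually prove this statement -- it is imported verbatim from \cite{Everitt-Fountain10}*{Proposition 2.1} -- but your argument is the standard one for which the two hypotheses are calibrated, and there is nothing to add.
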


Roughly speaking, two factorizable inverse monoids are the same if their
units are the same, their idempotents are the same, and the actions of
the units on the idempotents are the same.

Now let $E=\FF(P)$ above and $\cS_P$ be the system of intervals for $W$
given, as at the end of \S\ref{section:reflection.monoids}, 
by $E_{\geq f}=\{f'\in\FF(P)\,|\,f'\subseteq f\}$. Let $M(W,\cS_P)$ be the 
resulting monoid of partial permutations, in which every element can be 
written in the form $\id_{E_{\geq f}} w$ for $f$ a face of $P$ and $w\in W$.
The following is then an immediate application of Proposition 
\ref{section:reflectionmonoids:result200} (with $\theta$ the identity):

\begin{proposition}
\label{section:rennermonoids:result1000}
If  $W$ is the Weyl group of $G=G(M)$, $\cS_P$ the system arising from
the polytope $P$ and 
$R$ is the Renner monoid of $M$, then the map 
$\id_{E_{\geq f}} w\mapsto e_f w$ is an isomorphism $M(W,\cS_P)\rightarrow R$.
\end{proposition}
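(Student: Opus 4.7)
The plan is to apply Proposition~\ref{section:reflectionmonoids:result200} to the pair $M=M(W,\cS_P)$ and $N=R$, with the unit isomorphism $\theta\colon W\to W$ taken to be the identity. Both monoids are known to be factorizable inverse monoids with units $W$: for $M(W,\cS_P)$ this is the general fact recalled at the end of Section~\ref{section:reflection.monoids}, while for $R$ this is the standard structure theorem for Renner monoids. So the task is to produce the idempotent isomorphism $\zeta$, verify the two compatibility conditions, and read off the resulting formula $(ef)\varphi=(e\zeta)(f\theta)$.

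First I would identify the idempotents of $M(W,\cS_P)$. By construction the idempotents are the partial identities $\id_{E_{\geq f}}$ for $f\in\FF(P)$, and the discussion at the end of Section~\ref{section:reflection.monoids} shows that $f\mapsto E_{\geq f}$ is a $W$-equivariant poset isomorphism from $\FF(P)$ (with the monoid order from Section~\ref{section:idempotents}) to $\cS_P$ (ordered by reverse inclusion). Composing this with the $W$-equivariant poset isomorphism $\zeta_0\colon\FF(P)\to E(\ov T)$, $f\mapsto e_f$, described in Section~\ref{section:renner:general} (and justified there by the $W$-action on the columns of the matrix $A$ defining $P$), gives the required isomorphism $\zeta\colon\id_{E_{\geq f}}\mapsto e_f$ of commutative monoids of idempotents.

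Next I would check the two hypotheses of Proposition~\ref{section:reflectionmonoids:result200}. Equivariance of $\zeta$ with respect to $\theta=\id_W$ is immediate from the equivariance of $\zeta_0$: for $w\in W$ and $f\in\FF(P)$,
\[
(w^{-1}\id_{E_{\geq f}} w)\zeta=\id_{E_{\geq fw}}\zeta=e_{fw}=w^{-1}e_fw=(w\theta)^{-1}(\id_{E_{\geq f}}\zeta)(w\theta).
\]
For the stabilizer condition, $w$ fixes $\id_{E_{\geq f}}$ in $M(W,\cS_P)$ exactly when the $W$-action on $\cS_P$ fixes the interval $E_{\geq f}$, which by the poset isomorphism $f\mapsto E_{\geq f}$ is equivalent to $fw=f$ in $\FF(P)$, which in turn by the equivariant bijection $\zeta_0$ is equivalent to $w^{-1}e_f w=e_f$, i.e., $w\in W_{e_f}$. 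Thus $W_{\id_{E_{\geq f}}}\theta=W_{e_f}$.

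The hypotheses of Proposition~\ref{section:reflectionmonoids:result200} are then satisfied, so the map $\varphi\colon M(W,\cS_P)\to R$ sending $\id_{E_{\geq f}}w$ to $e_f w$ is a monoid isomorphism. I do not expect any serious obstacle: the only substantive content is the $W$-equivariant poset isomorphism $\FF(P)\cong E(\ov T)$, which is already in hand from Section~\ref{section:renner:general}; everything else is bookkeeping that slots directly into the general recognition lemma for factorizable inverse monoids.
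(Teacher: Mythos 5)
Your proposal is correct and follows exactly the route the paper takes: the paper states that the result is "an immediate application" of Proposition~\ref{section:reflectionmonoids:result200} with $\theta$ the identity, relying on the $W$-equivariant poset isomorphism $\zeta\colon\FF(P)\to E(\ov T)$, $f\mapsto e_f$, from \S\ref{section:renner:general}. You have merely written out the equivariance and stabilizer checks that the paper leaves implicit, and those checks are right.
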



For $e\in E(\ov{T})$ let 
$\Phi_e=\{v\in \Phi\,|\,s_v a=as_v\text{ for all }a\in E(\ov{T})_{\geq e}\}$. The proof 
of \cite{Everitt-Fountain10}*{Theorem 9.2} shows that if $X=E(\ov{T})_{\geq e}$, the isotropy group
$W_X$ is equal to $W(\Phi_e)$, the subgroup of $W$ generated by the
$s_v\,(v\in\Phi_e)$. Moreover, for $v\in\Phi$ and $t=s_v$, we have
$H_t\supseteq E(\ov{T})_{\geq e}$ if and only if $v\in\Phi_e$. 

The conditions of Remark 1 at the end of \S\ref{section:presentation}
are thus satisfied and
we are ready to set up our presentation for the Renner monoid:

\begin{description}
\item[(R1).] 
Let $A=\{e\in E\,|\,\dim Te=\dim T-1\}$ be the atoms of $E(\ov{T})$.
Let $O_k$ be sets defined as in (P1) of \S\ref{section:presentation}.
\item[(R2).] As before $W$ has a presentation with generators the
  $s\in S$ for $S=\{s_v\,|\,v\in\Delta\}$ and 
relations $(st)^{m_{st}}=1$. For each $w\in W$ we fix an expression
$\omega$ for $w$ in the simple reflections $s\in S$ (subject to $\ss=s$).
\item[(R3).] The action of $W$ on $\cS$ is represented notationally 
as before: 
for $a\in A$ fix an $a'\in O_1$ and a $w\in W$ with $a=w^{-1}a'w$
(subject to $w=1$ when $a\in O_1$) and define
$\aa:=\omega^{-1}a'\omega$.
If $w$ is an arbitrary element of $W$ and $a\in A$ then by $\aa^\omega$ we mean the
word obtained in this way for $w^{-1}aw\in A$. As before
this is not necessarily $\omega^{-1}\aa\omega$.
For $e\in\cS$, fix a join $e=\bigvee a_i\,(a_i\in A)$ and define $\ve:=\prod\aa_i$. 
\item[(R4).] For $e\in E(\ov{T})$ let 
$\Phi_e=\{v\in \Phi\,|\,s_v a=as_v\text{ for all }a\in E(\ov{T})_{\geq e}\}$
and let $\{v_1,\ldots,v_\ell\}$
be representatives, with $v_i\in\Delta$, for the $W$-action on $\Phi$.
For $i=1,\dots,\ell$, enumerate the 
pairs $(e,s_i:=s_{v_i})$ where 
$e\in E(\ov{T})$ is minimal in the partial order on $E(\ov{T})$ with the property that $v_i\in\Phi_{e}$.
Let $\mathit{Iso}$ be the set of all such pairs.
\end{description}

With the notation established we can now state the result, the proof of which is a direct translation
of Theorem \ref{presentations:result1000} using Remark 1 at the end of \S\ref{section:presentation}.

\begin{theorem}\label{presentations:result3000}
Let $M$ be an reductive irreducible algebraic monoid with $0$. Then
the Renner monoid of $M$ has a presentation with
\begin{align*}
\text{generators:\hspace{1em}}&s\in S,a\in O_1.&&\\
\text{relations:\hspace{1em}}
&(st)^{m_{st}}=1,\,(s,t\in S),&&\text{(Units)}\\
&a^2=a,\,(a\in O_1),&&\text{(Idem1)}\\
&\aa_1\aa_2=\aa_2\aa_1,\,(\{a_1,a_2\}\in O_2),&&\text{(Idem2)}\\
&\aa_1\ldots\aa_{k-1}=\aa_1\ldots\aa_{k-1}\aa,\,(\{a_1,\ldots,a_{k-1},a\}\in O_{k})&&\\
&\hspace*{0.5cm}\text{ with }a_1,\ldots,a_{k-1}\,,(3\leq k\leq\dim T)\text{ independent and }a\leq\bigvee a_i,&&\text{(Idem3)}\\
&s\aa=\aa^{s}s,\,(s\in S,a\in A),&&\text{(RefIdem)}\\
&\ve s=\ve,\,(e,s)\in\mathit{Iso}.&&\text{(Iso)}\\
\end{align*}
\end{theorem}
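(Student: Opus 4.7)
The plan is to apply Theorem \ref{presentations:result1000}, in the form given by Remark 1 at the end of Section \ref{section:presentation}, to the monoid of partial permutations $M(W,\cS_P)$. By Proposition \ref{section:rennermonoids:result1000} this monoid is isomorphic to the Renner monoid $R$, so a presentation of the former is a presentation of the latter. The bulk of the work is simply to check that the hypotheses of Remark 1 hold in this setting and that the data (R1)--(R4) are the correct specializations of (P1)--(P4).

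First I would verify the structural hypotheses. The face lattice $E=E(\ov{T})\cong\FF(P)$ is a graded atomic $\vee$-semilattice (with rank $\rk(e)=\dim T-\dim Te$ and atoms the $e$ with $\dim Te=\dim T-1$), and $W$ acts on $E$ by poset isomorphisms: this is the equivariance of $\zeta$ observed just before Proposition \ref{section:rennermonoids:result1000}. The system $\cS_P$ consists of the intervals $E_{\geq f}$ and, as in the general discussion at the end of Section \ref{section:reflection.monoids}, is a graded atomic system of subsets of $E$ for $W$. The single non-trivial condition required by Remark 1 is that, for each $e\in E$, the idempotent stabilizer $W_e$ is generated by reflections $t\in T$ with $H_t\supseteq e$. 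This is precisely the content of the paragraph preceding (R1): from \cite{Everitt-Fountain10}*{Theorem 9.2} one has $W_X=W(\Phi_e)$ for $X=E_{\geq e}$, and $v\in\Phi_e$ iff $H_{s_v}\supseteq X$.

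With these checks in place, the presentation follows by substituting (R1)--(R4) into the template supplied by Theorem \ref{presentations:result1000}. Clause (R1) sets up the orbit representatives $O_k$ (hypothesis (P1)); clause (R2) is the Coxeter presentation of $W$ (hypothesis (P2)); clause (R3) fixes the normal forms $\aa$, $\aa^\omega$, $\ve$ used in the (Idem2), (Idem3) and (RefIdem) relations (hypothesis (P3)); and clause (R4) enumerates the (Iso) pairs, using the equivalence $v\in\Phi_e\Leftrightarrow H_{s_v}\supseteq E_{\geq e}$ to identify the representatives in the action of $W$ on $\AA=\{H_t\mid t\in T\}$ with representatives for the action of $W$ on $\Phi$ (hypothesis (P4)). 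The relations (Units), (Idem1)--(Idem3), (RefIdem) and (Iso) in Theorem \ref{presentations:result3000} are then literally those in Theorem \ref{presentations:result1000} under this dictionary.

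The only real point of friction — and the reason Remark 1 is needed rather than Theorem \ref{presentations:result1000} directly — is that $R$ is not, in general, a reflection monoid (as noted in the introduction and in \cite{Everitt-Fountain10}*{Theorem 8.1}): the system $\cS_P$ is a system of subsets of the combinatorial object $E$, not a system of subspaces of $\XXX$. The key conceptual step, already handled by Proposition \ref{section:rennermonoids:result1000} together with the stabilizer identification above, is precisely the re-packaging of $R$ as a monoid of partial permutations satisfying the hypotheses of Remark 1. Once this has been done, the theorem is obtained by mechanical translation, with no further calculation required.
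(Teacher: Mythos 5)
Your proposal is correct and follows exactly the route the paper takes: the paper likewise establishes the isomorphism $M(W,\cS_P)\cong R$ via Proposition \ref{section:rennermonoids:result1000}, checks that the stabilizers $W_e=W(\Phi_e)$ are generated by the reflections $t$ with $H_t\supseteq E_{\geq e}$ so that the hypotheses of Remark 1 hold, and then declares the result a direct translation of Theorem \ref{presentations:result1000} under the dictionary (R1)--(R4) for (P1)--(P4). Nothing is missing.
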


All the presentations in this section can be obtained in an
algorithmic way, and so can be implemented in a computer algebra
package for specific calculations.

\subsection{The classical monoids I}
\label{section:renner:classical1}

We illustrate the results of the previous section by giving presentations for the Renner monoids
of the $\ov{k^\times G_0}\subseteq\m_n$ where $G_0$ is one of the
classical groups $\sl_n,\sp_n,\so_n$ (see also \cite{Godelle10}).
We see from Table \ref{table:classical} that while the root systems for
$\so_{2\ell+1}$ and $\sp_{2\ell}$ are different, the resulting Weyl
groups $W(B_\ell)$ and $W(C_\ell)$ turn out to be isomorphic. Indeed,
the Weyl groups $W(A_{n-1}),W(B_n)\cong W(C_n)$ and $W(D_n)$ all have
alternative descriptions as permutation groups: namely, the symmetric group
$\SS_n$ and the groups of signed and even signed permutations
$\SS_{\pm n}$ and $\SS_{\pm n}^e$ (see below for the definitions of
these).

The same is true for the Renner monoids: $\mso_{2\ell+1}$ and
$\msp_{2\ell}$ have isomorphic Weyl groups and isomorphic idempotents,
both $\cong\FF(\Diamond^\ell)$, so it is not surprising that
their Renner monoids are isomorphic. Indeed, the four Renner monoids
can be realized as monoids of partial permutations, with units one
of $\SS_\ell,\SS_{\pm\ell}$ or $\SS_{\pm\ell}^e$, and $E$ one of the
combinatorial descriptions of $\FF(P)$ given in \S\ref{section:idempotents:polytopes}.

Consequently there are two ways to get their presentations, and 
for variety we illustrate both.
For $\m_n=\ov{k^\times\sl_n}$ we just apply (R1)-(R4) and Theorem \ref{presentations:result3000}
directly. For the other three we work instead with their realizations as monoids of 
partial permutations, applying the adapted versions of (P1)-(P4), 
as in Remark 1 at the end of \S\ref{section:presentation}, and then
Theorem \ref{presentations:result1000}.
We then give an isomorphism from these to the Renner monoids. 

Throughout 
$\t_n\subset \gl_n$ is the group of invertible diagonal matrices. 

\begin{example}[the general linear monoid $\m_n$]
\label{eg:general}
Let $G_0=\sl_n$ with $T_0=\sl_n\cap \t_n$ a maximal torus; $G=k^\times G_0=\gl_n$ with 
maximal torus $T=k^\times T_0=\t_n$. The general linear monoid is then 
$\m_n=\ov{k^\times \sl_n}$ with $\ov{T}$ the diagonal matrices.

For $\diag(t_1,\ldots,t_n)\in T$ let $v_i\in\XXX(T)$ be given by 
$v_i\,\diag(t_1,\ldots,t_n)=t_i$. Then $\XXX(T)$ is the free $\Z$-module with basis 
$\{v_1,\ldots,v_n\}$ and $\XXX(T_0)$ the submodule consisting of those
$\sum t_i v_i$
with $\sum t_i=0$. The root system $\Phi(G_0,T_0)=\Phi(G,T)$ has type $A_{n-1}$:
$$
\{v_i-v_j\, (1\leq i\not= j\leq n)\},
$$
with simple system $\Delta=\{v_{i+1}-v_{i}\,(1\leq i\leq n-1)\}$ arising from the Borel subgroup
of upper triangular matrices. 

In this case the Weyl group $W(G,T)$ can be identified with a
subgroup of $G$, namely the set of permutation matrices 
$A(\pi):=\sum_i E_{i,i\pi}$ as $\pi$ varies over the symmetric group $\SS_n$. Indeed, the 
Weyl group is easily seen to be isomorphic to $\SS_n$, but we will stay inside the world of algebraic
groups in this example. The isomorphism $W(G,T)\rightarrow W(A_{n-1})$ is induced by
$A(i,j)\mapsto s_{v_i-v_j}$. 

The idempotents $E=E(\ov{T})$ are the diagonal matrices $\diag(t_1,\ldots,t_n)$ with 
$t_i\in\{0,1\}$ for all $i$. 
Alternatively, for $J\subseteq X=\{1,\ldots,n\}$, let 
$e_J^{}:=\sum_{j \in J} E_{jj}$, so that $E(\ov{T})$ consists of the $e_J^{}$
for $J\in\BB_X$ (and indeed, $E(\ov{T})$ is easily seen to be isomorphic to $\BB_X$,
but again we stay inside algebraic groups). 
The Weyl group action on $E(\ov{T})$ is given by
$$
e_J^{}\mapsto A(\pi)^{-1}e_J^{} A(\pi)=e_{J\pi}.
$$

Let $e_i:=e_J^{}$ for $J=\{1,\ldots,\widehat{i},\ldots,n\}$. 
Running through (R1)-(R4), the atoms in $E(\ov{T})$ are 
$A=\{e_i\,|\,1\leq i\leq n\}$.
There is a single $W$-orbit on $A$ and we choose $e:=e_1$ for $O_1$. There
is a single $W$-orbit on pairs of atoms and we choose the pair $\{e,e_2\}$ for $O_2$. 
We will see below that there is no need for $O_k$ for $k>2$. If
$e_i\in A$, $(i>1)$,
we have $e_i=s_{i-1}\ldots s_{1}es_{1}\ldots s_{i-1}$, so let
$$
\ve_i=s_{i-1}\ldots s_{1}e s_{1}\ldots s_{i-1},
$$
with $\ve_1=e$.
Let $e_J\in E$ with $X\setminus J=\{i_1,\ldots,i_k\}$, giving
$e_J=e_{i_1}\vee\cdots\vee e_{i_k}$, and let
$$
\ve_J=\ve_{i_1}\ldots\ve_{i_k}.
$$
We have $A(\pi)^{-1}e_J^{} A(\pi)=e_J^{}$ exactly when $J\pi=J$;
moreover,
$E_{\geq e_J}=\{e_I\,|\,J\supseteq I\}$. 
The result is that
$$
\Phi_{e_J}=\{v_i-v_j\,|\,i,j\not\in J\}.
$$
There is a single $W$-orbit on the roots $\Phi$ and we choose $v_2-v_1\in\Delta$
as representative. If $e_J$ is to be minimal in $E$ with the property that $v_2-v_1\in\Phi_{e_J}$
then $J$ is minimal (under reverse inclusion!) with $1,2 \not\in J$. Thus
$J=\{3,\ldots,n\},$ and the set $\mathit{Iso}$ consists of the single pair $(e_{\{3,\ldots,n\}},s_1)$
with $\ve_{\{3,\ldots,n\}}=\ve_1\ve_2=es_1es_1$.

\paragraph{A presentation 
of the Renner monoid for\/ $\m_n$:} 
By Theorem \ref{presentations:result3000} 
we have generators $s_1,\ldots,s_{n-1},$ $e$
with \emph{(Units)\/} relations $(s_is_j)^{m_{ij}}=1$, where
the $m_{ij}$ are given by the Coxeter symbol
$$
\begin{pspicture}(0,0)(14,1)
\rput(3,-0.5){
\rput(4,1){\BoxedEPSF{fig2.eps scaled 400}}
\rput(1.65,.6){$s_1$}\rput(3,.6){$s_2$}
\rput(4.9,.6){$s_{n-2}$}\rput(6.35,.6){$s_{n-1}$}
}
\end{pspicture}
$$
(recalling, as in \S\ref{section:popova:boolean}, that the nodes are joined
by an edge labeled $m_{ij}$ if $m_{ij}\geq 4$, an unlabelled edge
if $m_{ij}=3$, no edge if $m_{ij}=2$, and $m_{ij}=1$ when $i=j$).
The \emph{(Idem1)\/} relation is $e^2=e$ the \emph{(Idem2)\/}
relations are 
$$
\ve_1\ve_2=\ve_2\ve_1,\text{ or, }es_1es_1=s_1es_1e.
$$
We saw in \S\ref{section:idempotents:generalities}
that in $\BB_X$ (or in \S\ref{section:idempotents:polytopes} that in $\FF(\Delta^n)$) all
subsets of atoms are independent and so the \emph{(Idem3)\/} relations are vacuous. 
The \emph{(RefIdem)\/} relations are $s_i\ve_j=\ve_j^{s_i}s_i$ for
$1\leq i\leq n-1$ and $1\leq j\leq n$, but just as in Lemma \ref{popova:boolean:result100}
of \S\ref{section:popova:boolean} we can prune these down to
$s_ie=\ve^{s_i}s_i\text{ for }1\leq i\leq n-1$.
We have $s_1es_1=s_2$ and $s_ies_i=e\,(i>1)$ so that 
$\ve^{s_1}=\ve_2=s_1es_1$,
$\ve^{s_i}=e\,(i>1)$ and the relations are
$$
s_ie=es_i\,(i>1).
$$
Finally, the \emph{(Iso)\/} 
are $\ve_Js_1=\ve_J$ for $J=\{3,\ldots,n\}$, or 
$$
es_1e=es_1es_1.
$$
\begin{remark*}
It is well known that $R$ is isomorphic to the symmetric inverse monoid $\II_n$ where
the $s_i$ correspond to the (full) permutation $(i,i+1)$ and $e$ to the partial identity
on the set $\{2,\ldots,n\}$ (see also Figure \ref{fig1}). Thus we have, yet again, the Popova
presentation that we found in \S\ref{section:popova:boolean} for the Boolean monoid
$M(A_{n-1},\BB)$. The symmetric inverse monoid, in the context of Renner monoids, is often
called the Rook monoid.
\end{remark*}
\end{example}

As promised we now introduce two families of monoids of partial permutations.
Let $\pm X=\{\pm 1,\ldots,\pm \ell\}$ and define the group 
$\SS_{\pm X}$ of signed permutations of $X$ to be
$$
\SS_{\pm X}=\{\pi\in\SS_{X\cup -X}\,|\,(-x)\pi=-x\pi\text{ for all }x\in\pm X\}.
$$
(the reason for the change in notation from $n$ to $\ell$ will become
apparent in Example \ref{eg:symplectic} below).
A signed permutation $\pi$ is \emph{even\/} if the number of $x\in X$ with
$x\pi\in-X$ is even, and the even signed permutations $\SS_{\pm X}^e$ form a subgroup
of index two in $\SS_{\pm X}$. 

The symmetric group is a subgroup in an obvious
way: let $\pi\in\SS_{\pm X}$ be such that $x$ and $x\pi$ have the same
sign for all $x\in\pm X$. In particular $\pi$ is even. Any such $\pi$ has a unique expression
$\pi=\pi_+\pi_-$ with $\pi_+\in\SS_X$, $\pi_-\in\SS_{-X}$ and
$\pi_+(x)=\pi_-(-x)$. The map $\pi\mapsto\pi_+$ is then an isomorphism
from the set of such $\pi$ to $\SS_X$. We will just write
$\SS_X\subset\SS_{\pm X}$ (or $\subset\SS_{\pm X}^e$) from now on to mean this subgroup.

We require Coxeter system structures for $\SS_{\pm X}$ and $\SS_{\pm X}^e$. Indeed, we have
$\SS_{\pm X}\cong W(B_\ell)\cong W(C_\ell)$ via 
$s_{v_1}$ or $s_{2v_1}\mapsto(1,-1)$ and 
$s_{v_{i+1}-v_{i}}\mapsto(i,i+1)(-i,-i-1)$ and 
$\SS_{\pm X}\cong W(D_\ell)$ via $s_{v_1+v_2}\mapsto(1,-2)(-1,2)$ and 
$s_{v_{i+1}-v_{i}}\mapsto(i,i+1)(-i,-i-1)$.

Now to a system of subsets for $\SS_{\pm X}$ and $\SS_{\pm X}^e$. 
In \cite{Everitt-Fountain10}*{\S 5} we used the elements of $\BB_X$ to
give a system for $\SS_{\pm X}$ and this lead to the monoid $\II_{\pm n}$ of partial signed permutations.
Here we want something different.
Recall from Example
\ref{eg:octahedron} the poset $E$ of admissible subsets of $\pm X$, with
$\pm X$ adjoined. If $\pi\in\SS_{\pm X}$ and $J$ is admissible,
then it is easy to see that $J\pi$ is also admissible, and so the
action of $\SS_{\pm X}$ on $\pm X$ restricts to $E$.
Our system consists of the intervals $E_{\geq J}=\{I\in
E\,|\,J\supseteq I\}$ as in \S\ref{section:reflection.monoids}.

Write $M(\SS_{\pm X},\cS)$ and $M(\SS_{\pm X}^e,\cS)$ for the resulting monoids
of partial permutations. 

\subsection{A brief interlude}
\label{section:renner:interlude}

We 
detour to parametrize the orbits of the action 
$(J_1,\ldots,J_k)\stackrel{\pi}{\mapsto}(J_1\pi,\ldots,J_k\pi)$ of 
the symmetric group $\SS_X$ on 
$k$-tuples $(J_1,\ldots,J_k)$ of distinct subsets of $X$.
This description will be useful in obtaining the sets $O_k$ for
the monoids of partial permutations that appear in 
\S\S\ref{section:renner:classical2}-\ref{section:renner:permutohedron}.
The results of this subsection may well be part of the folklore of the
combinatorics of the symmetric group, but for completeness we include
a full discussion.
Let $X=\{1,\ldots,\ell\}, Y=\{1,\ldots,k\}$ with $\BB_Y$ the Boolean lattice on $Y$, ordered as
usual by reverse inclusion, and $[0,\ell]\subset\Z$, with
this interval inheriting the usual order from $\Z$.

Let $f:\BB_Y\rightarrow [0,\ell]$ be a poset map
and define
$f^*:\BB_Y\rightarrow\Z$ (not necessarily a poset map) by
\begin{equation}
  \label{eq:5}
{\textstyle f^*(I)=\sum_{J\supseteq I}(-1)^{|J\setminus I|}f(J).}
\end{equation}
Then $f$ is a \emph{characteristic map\/} if $f^*(I)\geq 0$ for all
$I$, and $f^*(\varnothing)=0$. 

A $k$-tuple $(J_1,\ldots,J_k)$ of subsets of $X$ gives rise to a
characteristic map as follows: define 
$f:\BB_Y\rightarrow [0,\ell]$ by $f(I)=|\bigcap_{i\in I}J_i|$ for $I$
non empty and $f(\varnothing)=|\bigcup_{i=1}^k J_i|$.
If $J\supseteq I$ then $\bigcap_J J_i\subseteq\bigcap_I J_i$, so that
$f(J)\leq f(I)$ and $f$ is a poset map. 
The number $f^*(I)$ is the 
cardinality of the set
$${\textstyle 
\bigl(\bigcap_I J_i\bigr)\setminus\bigl(\bigcup_{J\supset
  I}\bigcap_J J_i\bigr),
}
$$
so that $f^*(I)\geq 0$ for all $I$, and $f^*(\varnothing)=0$ (by inclusion-exclusion).

In fact every characteristic map arises from a tuple
$(J_1,\ldots,J_k)$ of distinct subsets in this way. 
For, let $f$ be an arbitrary characteristic map,
and let disjoint sets $K_I$, $(\varnothing\not=I\in\BB_Y)$ be defined by
first setting
$K_Y:=\{1,\ldots,f^*(Y)=f(Y)\}$ if $f(Y)>0$, or $K_Y:=\varnothing$ if
$f(Y)=0$. 
Now choose some total ordering $\preceq$ on $\BB_Y$
having minimal element $Y$, and for general
$I$ let $K_I$ be the
next $f^*(I)$ points of 
$[0,\ell]\setminus\bigcup_{J\prec\,I}K_J$. 
Although the choice of $\preceq$ is not important, for
definiteness we take $J\prec I$ when $|I|<|J|$ and order sets of the
same size lexicographically. 
Then for $i=1,\ldots,k$, let $J_i=\bigcup K_I$, the
(disjoint) union over those $I$ with $i\in I$. Finally, let $f'$ be
the characteristic map of the resulting tuple $(J_1,\ldots,J_k)$.

We claim that this construction makes sense and that
$f=f'$. Firstly, it is the fact that $K_I$ is to have $f^*(I)$
elements that forces the $f^*(I)\geq 0$ condition in the
definition of characteristic map.  
Next, recall that for $J\supseteq I$, the
M\"{o}bius function $\mu$ of a Boolean lattice is given by
$\mu(J,I)=(-1)^{|J\setminus I|}$. Thus, M\"{o}bius inversion applied
to (\ref{eq:5}) (see,
e.g.: \cite{Stanley97}*{\S3.7}) gives $f(I)=\sum_{J\supseteq I}f^*(J)$ for all $I$ in $\BB_Y$. In particular, 
$\ell\geq f(\varnothing)=\sum_J f^*(J)=\sum_J |K_J|$, and so there are enough
elements in the interval $[0,\ell]$ to house the disjoint sets $K_J$. Now let 
$\varnothing\not=I\in\BB_Y$. Then 
$$
{\textstyle f'(I)=\bigl|\bigcap_{i\in I}
  J_i\bigr|=\bigl|\bigcup_{J\supseteq I}K_J\bigr|=\sum_{J\supseteq I}f^*(J)=f(I).}
$$
Finally, $f'(\varnothing)=|\bigcup_{i=1}^k J_i|=|\bigcup_J K_J|=\sum_{J\not=\varnothing}f^*(J)$.
In particular we have
$f(\varnothing)-f'(\varnothing)=f^*(\varnothing)=0$. Thus $f=f'$. 
If $f$ is a characteristic map then we write $(J_1,\ldots,J_k)_f$ for
the tuple arising from it.

It is easy to see that
two $k$-tuples of subsets of $X$ lie in the same $\SS_X$-orbit exactly when the
corresponding 
characteristic maps are identical. Thus, 

\begin{lemma}\label{renner:result100}
Let $X=\{1,\ldots,\ell\}$ and $Y=\{1,\ldots,k\}$.
Then the orbits of the diagonal action of $\SS_X$ on 
$k$-tuples of distinct subsets of $X$ are
parametrized by the 
characteristic maps, i.e.: the 
poset maps $f:\BB_Y\rightarrow[0,\ell]$ satisfying
$f^*(I)\geq 0$ for all $I$ and $f^*(\varnothing)=0$, where $f^*$ is
defined by (\ref{eq:5}).
\end{lemma}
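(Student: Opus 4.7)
The plan is to verify the claim made just before the lemma: two $k$-tuples of distinct subsets of $X$ lie in the same $\SS_X$-orbit exactly when their characteristic maps are identical. The forward direction is immediate, since for $\pi\in\SS_X$ we have $|\bigcap_{i\in I}J_i\pi| = |\bigcap_{i\in I}J_i|$ and $|\bigcup_i J_i\pi| = |\bigcup_i J_i|$; hence $(J_1,\ldots,J_k)$ and $(J_1\pi,\ldots,J_k\pi)$ yield the same $f$. The characteristic map is thus a well-defined function on orbits. Surjectivity has already been established before the statement by the explicit construction $(J_1,\ldots,J_k)_f$.

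The substance is the reverse (injectivity) direction. The key device is to attach to each tuple $(J_1,\ldots,J_k)$ the ``cells''
\[
L_I = \Big(\bigcap_{i\in I}J_i\Big)\cap\Big(\bigcap_{j\notin I}(X\setminus J_j)\Big),\qquad I\in\BB_Y,
\]
which partition $X$ (each $x\in X$ lies in a unique $L_I$, namely the one indexed by $I=\{i\,|\,x\in J_i\}$), and from which each $J_i = \bigcup\{L_I\,|\,i\in I\}$ can be recovered. A straightforward inclusion-exclusion gives $|L_I|=f^*(I)$ for non-empty $I$, while $|L_\varnothing| = \ell - \sum_{I\neq\varnothing}f^*(I)$ is determined by the remaining cardinalities. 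Thus the multiset $\{|L_I|\,|\,I\in\BB_Y\}$ depends only on $f$.

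Now if $(J_1,\ldots,J_k)$ and $(J'_1,\ldots,J'_k)$ have the same characteristic map $f$ and cells $L_I, L'_I$ respectively, then $|L_I|=|L'_I|$ for all $I$, so we can pick bijections $\pi_I:L_I\to L'_I$. Their union $\pi=\bigsqcup_I\pi_I$ is an element of $\SS_X$ satisfying $J_i\pi = \bigcup\{L_I\pi_I\,|\,i\in I\} = \bigcup\{L'_I\,|\,i\in I\} = J'_i$, so the two tuples lie in one orbit.

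The only remaining housekeeping is to check that the defining conditions of a characteristic map are precisely what is needed on both sides. The inequality $f^*(I)\geq 0$ corresponds to $|L_I|\geq 0$, and $f^*(\varnothing)=0$ is the inclusion-exclusion identity $f(\varnothing)=|\bigcup_i J_i|=\sum_{\varnothing\neq J}(-1)^{|J|+1}f(J)$; conversely in the construction $(J_1,\ldots,J_k)_f$ these are exactly the hypotheses that allow the $K_I$ to be carved out of $[0,\ell]$. I expect no genuine obstacle here: the whole argument is bookkeeping with M\"obius inversion on $\BB_Y$, and the distinctness hypothesis on the entries $J_i$ plays no role (it is automatically preserved within each orbit and simply restricts which characteristic maps occur).
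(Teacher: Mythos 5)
Your proof is correct and follows essentially the same route as the paper: the paper's $f^*(I)$ is already identified there with the cardinality of exactly your cell $L_I=(\bigcap_{i\in I}J_i)\setminus(\bigcup_{J\supset I}\bigcap_J J_i)$, surjectivity is the $(J_1,\ldots,J_k)_f$ construction, and the orbit-equivalence claim is the same one the paper invokes (leaving it as ``easy to see'' where you supply the cell-matching permutation). Your closing remark about distinctness merely restricting which characteristic maps occur is also the right reading of the statement.
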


We write $\mathit{Char}_k$ for the set of characteristic maps 
$f:\BB_Y\rightarrow[0,\ell]$ when $|Y|=k$. Although $\mathit{Char}_k$  depends on both $k$
and $\ell$, in the examples below $\ell$ will be fixed. 

For fixed $k$ the possible characteristic maps in
$\mathit{Char}_k$ can be enumerated by letting $f(Y)=f^*(Y)=n_0\geq
0$. If $I=Y\setminus\{i\}$ then $f^*(I)=f(I)-f(Y)\geq 0$ gives $f(I)=n_i\geq
n_0$. In general, if $I=Y\setminus J$, $(J\subset Y)$ then $f(I)$ can
equal any $n_J^{}\in [0,\ell]$ satisfying 
$n_J^{}\geq\sum_{K\subseteq  J}(-1)^{|J\setminus K|}n_K^{}$ 
(and $f(\varnothing)=\sum_{J\not=\varnothing}(-1)^{|J|+1}n_J^{}$).

For example, if
$k=1$ then $\BB_Y$ is the two element poset $Y<\varnothing$. We have
$f^*(Y)=f(Y)\geq 0$ and $f(\varnothing)=f(Y)$. Thus
$\mathit{Char}_1$ consists of the $f(\varnothing)=f(Y)=n_0$, for each
$n_0\in[0,\ell]$, of which there are $\ell+1$. This coincides
with the fact that $\SS_X$ acts $t$-fold transitively on $X$ for each
$0\leq t\leq \ell$, hence there are $\ell+1$ orbits. 
Figure \ref{fig;characteristic.maps} shows the possibilities for
$k=1,2$ and $3$. 
For example, explicit orbit representatives
$(J_1,J_2,J_3)_f$ when $k=3$ can be obtained as follows:
let
$n_0,\ldots,n_3,n_{12},n_{13},n_{23}$ be integers satisfying the
conditions on the far right of Figure
\ref{fig;characteristic.maps}. The following picture depicts
$X=\{1,\ldots,\ell\}$, with $1$ at the left:
$$
\begin{pspicture}(0,0)(14,1.5)
\rput(-2.5,-.2){
\psframe[fillstyle=solid,fillcolor=lightgray,linecolor=white,linewidth=.05mm](2.2,0.5)(3,1.5)
\psframe[fillstyle=solid,fillcolor=lightgray,linecolor=white,linewidth=.05mm](4.4,0.5)(7.2,1.5)
\psframe[fillstyle=solid,fillcolor=lightgray,linecolor=white,linewidth=.05mm](13.2,0.5)(16.2,1.5)
\rput(2.6,1){$n_0$}
\rput(3.7,1){$n_1-n_0$}
\rput(5.1,1){$n_2-n_0$}
\rput(6.5,1){$n_3-n_0$}
\rput(8.7,1){$n_{12}-n_1-n_2+n_0$}
\rput(11.7,1){$n_{13}-n_1-n_3+n_0$}
\rput(14.7,1){$n_{23}-n_2-n_3+n_0$}
\psline[linewidth=.2mm]{-}(2.2,.5)(16.2,.5)
\psline[linewidth=.2mm]{-}(2.2,1.5)(16.2,1.5)
\psline[linewidth=.2mm]{-}(2.2,0.5)(2.2,1.5)
\psline[linewidth=.2mm]{-}(3,0.5)(3,1.5)
\psline[linewidth=.2mm]{-}(4.4,0.5)(4.4,1.5)
\psline[linewidth=.2mm]{-}(5.8,0.5)(5.8,1.5)
\psline[linewidth=.2mm]{-}(7.2,0.5)(7.2,1.5)
\psline[linewidth=.2mm]{-}(10.2,0.5)(10.2,1.5)
\psline[linewidth=.2mm]{-}(13.2,0.5)(13.2,1.5)
\psline[linewidth=.2mm]{-}(16.2,0.5)(16.2,1.5)
\rput(2.6,.3){$\scriptstyle{\{1,2,3\}}$}
\rput(3.7,.3){$\scriptstyle{\{2,3\}}$}
\rput(5.1,.3){$\scriptstyle{\{1,3\}}$}
\rput(6.5,.3){$\scriptstyle{\{1,2\}}$}
\rput(8.7,.3){$\scriptstyle{\{3\}}$}
\rput(11.7,.3){$\scriptstyle{\{2\}}$}
\rput(14.7,.3){$\scriptstyle{\{1\}}$}
}
\rput(14.3,0.75){$(\dag)$}
\end{pspicture}
$$
and the number in each box gives the number of points in the box
(so the
left most box represents the points $\{1,\ldots,n_0\}$, the second the
points $\{n_0+1,\ldots,n_1\}$, and so on). Each box is also labeled
below by a subset of $Y$. Then $J_i$ is
the union of those boxes for which $i$ appears in the subset
below it; e.g.: $J_1$ is the union of the grey boxes.

\begin{figure}
\begin{pspicture}(0,0)(14,4.25)
\rput(0,.2){
\rput(0,0){
\rput(0,0.75){
\psline[linewidth=.2mm]{-}(1,0.5)(1,2)
\pscircle[linewidth=.2mm,fillcolor=white,fillstyle=solid](1,0.5){1.5mm}
\pscircle[linewidth=.2mm,fillcolor=white,fillstyle=solid](1,2){1.5mm}
\rput(1.35,0.5){$n_0$}\rput(1.35,2){$n_0$}
}
\rput(1,.5){$0\leq n_0\leq \ell$}
}
\rput(0,0.25){
\rput(5,1){
\rput{45}{\psframe[linewidth=.2mm](1.7,1.7)
\pscircle[linewidth=.2mm,fillcolor=white,fillstyle=solid](0,0){1.5mm}
\pscircle[linewidth=.2mm,fillcolor=white,fillstyle=solid](0,1.7){1.5mm}
\pscircle[linewidth=.2mm,fillcolor=white,fillstyle=solid](1.7,0){1.5mm}
\pscircle[linewidth=.2mm,fillcolor=white,fillstyle=solid](1.7,1.7){1.5mm}
}
\rput(0.35,0){$n_0$}\rput(-1.55,1.2){$n_1$}\rput(1.6,1.2){$n_2$}\rput(1,2.4){$n_1+n_2-n_0$}
}
\rput(-3,-1){
\rput(8,1.35){$0\leq n_0\leq n_1,n_2\leq\ell$}\rput(8,0.9){$n_1+n_2-n_0\leq\ell$}
}}
\rput(2.25,0){
\rput(5.25,0){
\psline[linewidth=.2mm]{-}(2,0)(0.5,1.25)
\psline[linewidth=.2mm]{-}(2,0)(2,1.25)
\psline[linewidth=.2mm]{-}(2,0)(3.5,1.25)
\psline[linewidth=.2mm]{-}(0.5,1.25)(0.5,2.5)
\psline[linewidth=.2mm]{-}(0.5,1.25)(2,2.5)
\psline[linewidth=.2mm]{-}(2,1.25)(0.5,2.5)
\psline[linewidth=.2mm]{-}(2,1.25)(3.5,2.5)
\psline[linewidth=.2mm]{-}(3.5,1.25)(2,2.5)
\psline[linewidth=.2mm]{-}(3.5,1.25)(3.5,2.5)
\psline[linewidth=.2mm]{-}(0.5,2.5)(2,3.75)
\psline[linewidth=.2mm]{-}(2,2.5)(2,3.75)
\psline[linewidth=.2mm]{-}(3.5,2.5)(2,3.75)
\pscircle[linewidth=.2mm,fillcolor=white,fillstyle=solid](2,0){1.5mm}
\pscircle[linewidth=.2mm,fillcolor=white,fillstyle=solid](0.5,1.25){1.5mm}
\pscircle[linewidth=.2mm,fillcolor=white,fillstyle=solid](2,1.25){1.5mm}
\pscircle[linewidth=.2mm,fillcolor=white,fillstyle=solid](3.5,1.25){1.5mm}
\pscircle[linewidth=.2mm,fillcolor=white,fillstyle=solid](0.5,2.5){1.5mm}
\pscircle[linewidth=.2mm,fillcolor=white,fillstyle=solid](2,2.5){1.5mm}
\pscircle[linewidth=.2mm,fillcolor=white,fillstyle=solid](3.5,2.5){1.5mm}
\pscircle[linewidth=.2mm,fillcolor=white,fillstyle=solid](2,3.75){1.5mm}
\rput(2.4,0){$n_0$}
\rput(0.9,1.25){$n_1$}\rput(2.4,1.25){$n_2$}\rput(3.9,1.25){$n_3$}
\rput(0.95,2.5){$n_{12}$}\rput(2.45,2.5){$n_{13}$}\rput(3.95,2.5){$n_{23}$}
\rput(3.3,3.75){$\sum n_{ij}-\sum n_i+n_0$}
}
\rput(11,2.3){$0\leq n_0\leq n_i\leq \ell$}
\rput(11,1.75){$n_i+n_j-n_0\leq n_{ij}\leq\ell$}
\rput(11,1.25){$\sum n_{ij}-\sum n_i+n_0\leq \ell$}
}}
\end{pspicture}
  \caption{The sets $\mathit{Char}_1$, $\mathit{Char}_2$ and $\mathit{Char}_3$: the elements of
the Boolean lattice $\BB_Y$, ($|Y|=1,2$ and $3$) are labeled by their images under 
a characteristic map $f:\BB_Y\rightarrow [0,\ell]$.}
  \label{fig;characteristic.maps}
\end{figure}
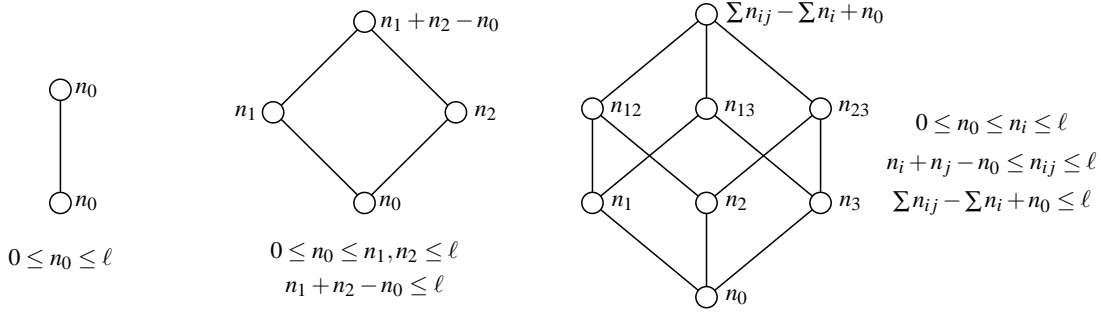

\subsection{The classical monoids II}
\label{section:renner:classical2}

We now return to the monoids $M(\SS_{\pm X},\cS)$ and $M(\SS_{\pm X}^e,\cS)$
from \S\ref{section:renner:classical1}. For the rest of the paper, all
mention of (P1)-(P4) refers to the adapted versions of these as in
Remark 1 at the end of \S\ref{section:presentation}.
As observed in \S\ref{section:reflection.monoids},
the map $J\mapsto E_{\geq J}$ is a poset isomorphism 
$E\cong\cS$ which is equivariant with respect
to the $\SS_{\pm X}$ and $\SS_{\pm X}^e$ actions. Thus, in running
through (P1)-(P4) we can work
with the admissible $J\in E$ rather than the corresponding intervals
$E_{\geq J}\in\cS$. This makes the notation a little less cumbersome.

\paragraph{(1). The monoid $M(\SS_{\pm X},\cS)$.}
The atoms in $E$ are the $a(I):=I\cup(-X\setminus -I),\,I\subseteq X=\{1,\ldots,\ell\}$ of
\S\ref{section:idempotents:polytopes}.
There are thus $2^\ell$ atoms here  versus the $\ell$ atoms in the $\sl_n$ case. 
Now to the sets $O_k$ for $k\geq 1$. 
Let $a(I)$ be an atom of $E$ with $I=\{i_1,\ldots,i_k\}$. Then 
\begin{equation}
  \label{eq:11}
a(I)\cdot (i_1,-i_1)\cdots(i_k,-i_k)=-X=a(\varnothing),  
\end{equation}
so there is a single $\SS_{\pm X}$-orbit on the atoms, and we take
$O_1=\{a\}$ with $a:=a(\varnothing)$. 

For $O_2$ we can use the set $\mathit{Char}_2$ of the previous
section, although it turns out that with the $\SS_{\pm X}$ action we do
can do a little more. 
Let $a(I),a(K)$ be a pair of atoms with $|I|\leq |K|$ and $I\cap
K=\{i_1,\ldots,i_k\}$. Then
$$
(a(I),a(K))\cdot(i_1,-i_1)\cdots(i_k,-i_k)=(a(I_1),a(K_1))
$$ 
with 
$I_1=I\setminus(I\cap K)$ and $K_1=K\setminus(I\cap K)$ disjoint. The
pair $I_1,K_1$ can then be moved by the
$\SS_X$-action as far as possible to the left of
$\{1,\ldots,\ell\}$ while remaining disjoint. Thus, for $O_2$ we take the pairs $\{a(I),a(K)\}$ with
$I=\{1,\ldots,j_1\}$, $K=\{j_1+1,\ldots,j_2\}$ for all
$0\leq j_1<j_2\leq \ell$.

For $k=3$ we can play a similar game, but this doesn't work for
$k>3$. Instead, for 
$k>2$ we restrict the $\SS_{\pm X}$-action on $E$ to the subgroup
$\SS_X\subset\SS_{\pm X}$
and consider orbit representatives on the $k$-tuples
as in remark 3 at the end of \S\ref{section:presentation}. 
Thus the $O_k\,,(k>2)$ will be 
sets of representatives with possible redundancies.
If $f\in\mathit{Char}_k$ is a characteristic map, then by the
construction preceding Lemma \ref{renner:result100} we have a unique
tuple $(I_1,\ldots,I_k)_f$ with characteristic map $f$. For $O_k$ we
take the set of $\{a(I_1),\ldots,a(I_k)\}$ where $(I_1,\ldots,I_k)_f$
arises via $f\in\mathit{Char}_k$.

Write $s_i:=(i,i+1)(-i,-i-1),\,(1\leq i\leq \ell-1)$ and $s_0:=(1,-1)$ and let
$$
\omega_i:=s_{i-1}\cdots s_1s_0s_1\cdots s_{i-1}
$$
for $i>1$ and
$\omega_1=s_0$. If $a(I)$ is an atom with 
$I=\{i_1,\ldots,i_k\}$, let 
\begin{equation}
  \label{eq:12}
  \aa(I):=\omega_{i_1}\ldots\omega_{i_k} a\,\omega_{i_k}\ldots\omega_{i_1}.
\end{equation}

Let $J\in E$ be admissible with 
$\pm X\setminus\pm J=\{\pm i_1,\ldots,\pm i_k\}$. Then, recalling that
$J^+=J\cap X$, we 
take as fixed word for $J$
\begin{equation}
  \label{eq:12a}
\aa(\,\widehat{i}_1,\ldots,i_k,J^+)\cdots\aa(i_1,\ldots,\widehat{i}_k,J^+)
\end{equation}
when $k>1$ (and where $\aa(\,\widehat{i}_1,\ldots,i_k,J^+)$ means 
$\aa(\,\{\widehat{i}_1,\ldots,i_k\}\cup J^+)$), or $\aa(J^+)\aa(i_1,J^+)$ when $k=1$.

Finally then to (P4) and $\AA=\{H_t\}$ where $H_t=\{J\in E\,|\,Jt=J\}$. Every $t\in T$
in $\SS_{\pm X}$ is conjugate to $s_0$ or $s_1$ (using the Coxeter group structure from the 
end of \S\ref{section:renner:classical1}) so there are two $\SS_{\pm X}$ orbits on $\AA$
with representatives 
$H_0:=H_{s_0}$ and $H_1:=H_{s_1}$, 
where $H_0$ consists of those $J\in E$ with $\pm 1\not\in J$ and 
$H_1$ those $J$ with either $\pm 1,\pm 2\not\in J$ or $1,2\in J$ or $-1,-2\in J$. 
If $J$ is to be minimal with $H_0\supseteq E_{\geq J}$ then
$J$ has the form 
\begin{equation}
  \label{eq:13}
\begin{pspicture}(0,0)(14,1)
\psline[linewidth=1pt]{-}(3,1)(11,1)
\psline[linewidth=1pt]{-}(3,0.5)(11,0.5)
\psline[linewidth=1pt]{-}(3,0)(11,0)
\psline[linewidth=1pt]{-}(3,0)(3,1)
\psline[linewidth=1pt]{-}(4,0)(4,1)
\psline[linewidth=1pt]{-}(11,0)(11,1)
\psframe[fillstyle=solid,fillcolor=black](4,.5)(6,1)
\psframe[fillstyle=solid,fillcolor=black](6,.5)(9,0)
\psframe[fillstyle=solid,fillcolor=black](9,.5)(11,1)
\rput(3.5,.75){$1$}\rput(3.5,.25){$-1$}
\end{pspicture}  
\end{equation}
which is $\aa(J^+)\aa(1,J^+)$.
Similarly, if $J$ is to be minimal with $H_1\supseteq E_{\geq J}$ then
$J$ has the form 
\begin{equation}
  \label{eq:14}
  \begin{pspicture}(0,0)(14,1)
\psline[linewidth=1pt]{-}(3,1)(11,1)
\psline[linewidth=1pt]{-}(3,0.5)(11,0.5)
\psline[linewidth=1pt]{-}(3,0)(11,0)
\psline[linewidth=1pt]{-}(3,0)(3,1)
\psline[linewidth=1pt]{-}(4,0)(4,1)
\psline[linewidth=1pt]{-}(5,0)(5,1)
\psline[linewidth=1pt]{-}(11,0)(11,1)
\psframe[fillstyle=solid,fillcolor=black](5,.5)(6,1)
\psframe[fillstyle=solid,fillcolor=black](6,.5)(9,0)
\psframe[fillstyle=solid,fillcolor=black](9,.5)(11,1)
\rput(3.5,.75){$1$}\rput(3.5,.25){$-1$}
\rput(4.5,.75){$2$}\rput(4.5,.25){$-2$}
\end{pspicture}
\end{equation}
which is $\aa(1,J^+)\aa(2,J^+)$.

The set \emph{Iso\/} thus consists of the pairs  
$$
(\aa(1,I)\aa(2,I),s_1)
$$
for all $I\subseteq X\setminus\{1,2\}$ and 
$$
(\aa(I)\aa(1,I),s_0)
$$ 
for all
$I\subseteq X\setminus\{1\}$. Rather than write out the resulting presentation
for this monoid here, we save it for Example \ref{eg:symplectic} below.

\paragraph{(2). The monoid $M(\SS_{\pm X}^e,\cS)$.}
The difference here is that we pass to the subgroup
$\SS_{\pm X}^e$ of $\SS_{\pm X}$ and its action on $E$.
The atoms are the $a(I):=I\cup(-X\setminus -I),\,I\subseteq X$ as before.
If $a(I)$ is one such with $I=\{i_1,\ldots,i_k\}$, then for $k$ even 
\begin{equation}
  \label{eq:20}
a(I)\cdot (i_1,-i_2)(-i_1,i_2)\cdots(i_{k-1},-i_k)(-i_{k-1},i_k)=-X=a(\varnothing),  
\end{equation}
and for $k$ odd
\begin{equation}
  \label{eq:21}
a(I)\cdot
(i_1,-i_2)(-i_1,i_2)\cdots(i_{k-2},-i_{k-1})(-i_{k-2},i_{k-1})(i_{k-1},i_{k})(-i_{k-1},-i_{k})
\cdots(1,2)(-1,-2)
\end{equation}
gives $a(1)$. 
Although (\ref{eq:11}) still holds in the even case, we change here to the
version (\ref{eq:20}) because of our choice of generators for
$\SS_{\pm X}^e$ below. Thus, $O_1=\{a_1:=a(\varnothing),a_2=a(1)\}$. 

Let $a(I),a(K)$ be a pair of atoms with $I\cap
K=\{i_1,\ldots,i_k\}$. Then a similar argument as in the
$\SS_{\pm X}$ case gives $O_2$ the pairs $\{a(I),a(K)\}$ with
$I=\{1,\ldots,j_1\}$, $K=\{j_1+1,\ldots,j_2\}$ (when $k$ is even) 
or $I=\{1,\ldots,j_1\}$, $K=\{j_1,\ldots,j_2\}$
(when $k$ is odd), with $0\leq j_1<j_2\leq \ell$ in both cases.

The $O_k\,,(k>2)$ are exactly as in the $\SS_{\pm X}$ case,
since $\SS_X\subset\SS_{\pm X}^e$.
Thus $O_k$ is the set of 
$\{a(I_1),\ldots,a(I_k)\}$ where $(I_1,\ldots,I_k)_f$
arises via $f\in\mathit{Char}_k$.

Write $s_i:=(i,i+1)(-i,-i-1),\,(1\leq i\leq \ell-1)$ and $s_0:=(1,-2)(-1,2)$ and let
$$
\omega_{ij}:=s_{i-1}\cdots s_1s_{j-1}\cdots s_2s_0s_2\cdots s_{j-1}s_1\cdots s_{i-1}
$$
for $1<i<j\leq\ell-1$, or 
$\omega_{1j}:=s_{j-1}\cdots s_2s_0s_2\cdots s_{j-1},\,(j>2)$ 
or $\omega_{12}:=s_0$.
If $a(I)$ is an atom with $I=\{i_1,\ldots,i_k\}$ and $k$ even, let 
\begin{equation}
  \label{eq:22}
  \aa(I):=\omega_{i_1i_2}\ldots\omega_{i_{k-1}i_k} a_1\,\omega_{i_{k-1}i_k}\ldots\omega_{i_1i_2},
\end{equation}
or if $k$ is odd
\begin{equation}
  \label{eq:23}
  \aa(I):=\omega_{i_1i_2}\ldots\omega_{i_{k-2}i_{k-1}}s_{i_{k-1}}\cdots s_1
a_2\,s_1\cdots s_{i_{k-1}}\omega_{i_{k-2}i_{k-1}}\ldots\omega_{i_1i_2}.
\end{equation}

If $J\in E$ is admissible then it is represented by the word (\ref{eq:12a}) and
the comments following it.
The treatment of (P4) is also virtually identical to the previous case:
every $t\in T$ is conjugate in $\SS_{\pm X}^e$ to $s_1$, so
there is a single $\SS_{\pm X}^e$-orbit on $\AA$ with representative
$H_1:=H_{s_1}$ consisting of the $J\in E$ with either $\pm 1,\pm 2\not\in J$ or
$1,2\in J$ or $-1,-2\in J$. 
The set \emph{Iso\/} thus consists of the pairs  
$(\aa(1,I)\aa(2,I),s_1)$
for all $I\subseteq X\setminus\{1,2\}$.
Again, we save the presentation of this monoid for Example \ref{eg:evenorthogonal} below.

\begin{example}[the symplectic monoids $\msp_n$]
\label{eg:symplectic}
Let $n=2\ell$ and  
$$
G_0=\sp_n=\{g\in\gl_n\,|\,g^TJg=J\}
\text{ for } 
J=\left[\begin{array}{cc}
0&J_0\\-J_0&0  
\end{array}\right],
$$
where $J_0=\sum_{i=1}^{\ell} E_{i,\ell-i+1}$ is $\ell\times\ell$.
Note that as in \cite{Li_Renner03}, this is the version of the
symplectic group given by Humphreys \cite{Humphreys75} rather than the version
used by Solomon in \cite{Solomon95}.
Let $T_0=\sp_n\cap\t_n$, the matrices of the form
$$
\diag(t_1,\ldots,t_\ell,t_\ell^{-1},\ldots,t_1^{-1})
$$
with the $t_i\in k^\times$. Let $G=k^\times\sp_n$ with maximal torus 
$T=k^\times T_0$, and let
the symplectic monoid
$\msp_n=\ov{k^\times\sp_n}\subset\m_n$.

For $i=0,\ldots,\ell$ let $v_i\in\XXX(T)$ be given by 
$$
v_i\,t_0\cdot\diag(t_1,\ldots,t_\ell,t_\ell^{-1},\ldots,t_1^{-1})
=t_i
$$
so that $\XXX(T)$ is the free $\Z$-module on $\{v_0,\ldots,v_\ell\}$. The roots
$\Phi(G_0,T_0)=\Phi(G,T)$ have type $C_\ell$:
$$
\{\pm v_i\pm v_j\, (1\leq i< j\leq\ell)\}\cup\{\pm 2v_i\,(1\leq i\leq\ell)\}
$$
lying in an $\ell$-dimensional subspace of $\XXX=\XXX(T)\otimes\R$. The group
$G$ has rank $\ell+1$ and semisimple rank $\ell$.
We use the simple system $\Delta=\{2v_1,v_{i+1}-v_i\,(1\leq i\leq
\ell-1)\}$ as described in \S\ref{section:reflection.monoids}.

We now describe an isomorphism between the Renner monoid $R$ of
$\msp_{2\ell}$ and the monoid $M(\SS_{\pm\ell},\cS)$ of partial
isomorphisms described in (1) above. 
The units in $M(\SS_{\pm\ell},\cS)$ are $\SS_{\pm\ell}$ and the units
in the Renner monoid $R$ are the Weyl group $W(C_\ell)$ so we have the isomorphism
$\SS_{\pm\ell}\cong W(C_\ell)$ given at the end of \S\ref{section:renner:classical1}.
Let $s_0,\ldots,s_{\ell-1}$ denote either the signed permutations of
$\SS_{\pm\ell}$ introduced in (1) above or the 
simple reflections in $W(C_\ell)$. 

The idempotents in $M(\SS_{\pm\ell},\cS)$ are the partial
identities $\id_{E_{\geq J}}$ on the $E_{\geq J}\in\cS$, and the idempotents in $R$ are $E(\ov{T})$, the 
matrices
$\diag(t_1,\ldots,t_\ell,t_\ell^{-1},\ldots,t_1^{-1})$
with $t_i\in\{0,1\}$.
We write $E$ for the
idempotents in $M(\SS_{\pm\ell},\cS)$
as well as for the poset of admissible subsets.
Let $\eta:\pm X\rightarrow\{1,\ldots,n=2\ell\}$ be given by
$$
\eta(i):=
\left\{\begin{array}{ll}
i,&i>0\\
2\ell+1+i,&i<0.\\
\end{array}\right.
$$
Define $\zeta:E\rightarrow E(\ov{T})$ by
$\id_{E_{\geq J}}\mapsto e(J):=\sum_{j\in\eta J}E_{jj}$ for $J\subset\pm X$ admissible,
and $\id_{\pm X}\mapsto I_n$.
Then $\zeta:E\rightarrow E(\ov{T})$ is an isomorphism that
is equivariant with respect to the $\SS_{\pm\ell}$-action on $E$ 
and the $W(C_\ell)$-action on $E(\ov{T})$ (see
\cite{Solomon95}*{Example 5.5}).

Finally, if $e=\id_{E_{\geq J}}$ is an idempotent in $M(\SS_{\pm\ell},\cS)$
and $G=\SS_{\pm\ell}$, then the idempotent stabilizer $G_e$ consists
of those $\pi\in\SS_{\pm\ell}$ that fix the admissible set $J$
pointwise. Similarly, we have $W(C_\ell)_{e\zeta}$ consisting
of those $\pi\theta\in W(C_\ell)$ with $e(J)\pi\theta=e(J)$. This is
also equivalent to $\pi$ fixing $J$ pointwise. We thus have our
isomorphism $M(\SS_{\pm\ell},\cS)\cong R$ by Proposition 
\ref{section:reflectionmonoids:result200} (we could also have used 
Proposition \ref{section:rennermonoids:result1000} but the above is more direct).

\paragraph{A presentation for the Renner monoid of $\msp_{2\ell}$:} 
It remains to take the (P1)-(P4) data for $M(\SS_{\pm\ell},\cS)$
listed in (1) above and apply Theorem \ref{presentations:result1000}.
We have generators 
$s_0,\ldots,s_{\ell-1},a$
with \emph{(Units)\/} relations $(s_is_j)^{m_{ij}}=1$
where the $m_{ij}$ are given by
$$
\begin{pspicture}(0,0)(14,1)
\rput(3,-.5){
\rput(4,1){\BoxedEPSF{fig2.eps scaled 400}}
\rput(1.65,.6){$s_0$}\rput(3,.6){$s_1$}
\rput(4.9,.6){$s_{\ell-2}$}\rput(6.35,.6){$s_{\ell-1}$}
\rput(2.4,1.25){$4$}
}
\end{pspicture}
$$
in the usual way.
The \emph{(Idem1)\/} relation is $a^2=a$, and the \emph{(Idem2)}
relations are 
$$
\alpha(1,\ldots,j_1)\alpha(j_1+1,\ldots,j_2)
=\alpha(j_1+1,\ldots,j_2)\alpha(1,\ldots,j_1),
$$
for all $0\leq j_1<j_2\leq \ell$, with $\aa(I)$ given by (\ref{eq:12}).
The \emph{(Idem3)} relations are 
$$
\aa(I_1)\ldots\aa(I_{k-1})=\aa(I_1)\ldots\aa(I_{k-1})\aa(I)
$$
for $(I_1,\ldots,I_{k-1},I)_f$ arising from $f\in\mathit{Char}_k$,
and
with $(a(I_1),\ldots,$ $a(I_{k-1}))\in\mathit{Ind}_{k-1}$, $(k\geq 2)$ and
all $a(K)\supseteq\bigcap a(I_i)$, where $\mathit{Ind}_{k-1}$ is given by
Proposition \ref{presentations:result375}. The \emph{(RefIdem)\/}
relations consist of three families: 
$$
s_0\,\aa(I)=\aa(I)s_0,
\text{ and }
s_i\,\aa(I)=\aa(I)s_i,
\text{ and }
s_i\,\aa(I)=\aa(I)^{s_i}s_i.
$$
The first is
for all $I\subseteq X$ with $1\not\in I$ (if $1\in I$ then the relations
$s_0\,\aa(I)=\aa(I)^{s_0}s_0$ are vacuous); the
second for $1\leq i\leq \ell-1$ and $i,i+1\in I$ or $i,i+1\not\in I$;
the third when exactly one of $i,i+1$ lies in $I$;
finally,
$\aa(I)^{s_i}=s_i\omega_{i+1}\omega_{i_2}\cdots\omega_{i_k}
a\omega_{i_k}\cdots\omega_{i_2}\omega_{i+1}$
when $I=\{i,i_2,\ldots,i_k\}$, and when $i+1\in I$ is similar. 
Finally, the \emph{(Iso)\/} relations are
$$
\aa(1,I)\aa(2,I)s_1=\aa(1,I)\aa(2,I),
\text{ and }
\aa(I)\aa(1,I)s_0=\aa(I)\aa(1,I),
$$
the first for all $I\subseteq X\setminus\{1,2\}$ and
the second for all $I\subseteq X\setminus\{1\}$.
\end{example}

\begin{example}[the odd dimensional special orthogonal monoids
  $\mso_n$]
\label{eg:oddorthogonal}
This is very similar to the previous case so we will just run
through the answers. Let $n=2\ell+1$ and 
$$
G_0=\so_n=\{g\in\gl_n\,|\,g^TJg=J\}
\text{ for } 
J=\left[\begin{array}{ccc}
0&0&J_0\\
0&1&0\\
-J_0&0&0  
\end{array}\right],
$$
with $J_0$ as in Example \ref{eg:symplectic}. We have taken the definition
of $\so_n$ given in  \cite{Li_Renner03} rather than \cite{Humphreys75} to make
the similiarity with $\sp_n$ more apparent. 
We have $T_0=\so_n\cap\t_n$, the matrices of the form
$\diag(t_1,\ldots,t_\ell,\pm 1,t_\ell^{-1},\ldots,t_1^{-1})$
with the $t_i\in k^\times$; $G=k^\times\so_n$ with
$T$ as before and the orthogonal monoid $\mso_n=\ov{k^\times\so_n}\subset\m_n$.

The roots have type $B_\ell$, so are the same as $C_\ell$ except with
$\pm v_i$ instead of $\pm 2v_i$. Nevertheless, the Weyl group
$W(B_\ell)$ is isomorphic to $W(C_\ell)$ and we take the simple system 
$\Delta=\{v_1,v_{i+1}-v_i\,(1\leq i\leq \ell-1)\}$. 

If $R$ is the Renner monoid of $\mso_n$ then the isomorphism
$M(\SS_{\pm\ell},\cS)\cong R$ is completely analogous to before: the
only changes are that in the isomorphism 
$\theta:\SS_{\pm\ell}\rightarrow W(B_\ell)$ we have 
$(1,-1)\mapsto s_0:=s_{v_1}$ and in the isomorphism 
$\zeta:E\rightarrow E(\ov{T})$ we have 
$\eta:\pm X\rightarrow\{1,\ldots,n=2\ell+1\}$ given by
$$
\eta(i):=
\left\{\begin{array}{ll}
i,&i>0\\
2\ell+2+i,&i<0\\
\end{array}\right.
$$
and $e(J):=E_{\ell+1,\ell+1}+\sum_{j\in\eta J}E_{jj}$ for $J\subset\pm
X$ admissible.

\paragraph{A presentation for the Renner monoid of $\mso_{2\ell+1}$:} 
This is identical to the presentation in the $\msp_{2\ell}$ case of Example 
\ref{eg:symplectic}.
\end{example}

\begin{example}[the even dimensional special orthogonal monoids
  $\mso_n$]
\label{eg:evenorthogonal}
Let $n=2\ell$ and 
$$
G_0=\so_n=\{g\in\gl_n\,|\,g^TJg=J\}
\text{ for } 
J=\left[\begin{array}{cc}
0&J_0\\
J_0&0 
\end{array}\right],
$$
with $J_0$ as in Example \ref{eg:symplectic}
and $\mso_n$ as in Example \ref{eg:oddorthogonal}.

The roots have type $D_\ell$: $\{\pm v_i\pm v_j\, (1\leq i<
j\leq\ell)\}$ with $\Delta=\{v_1+v_2,v_{i+1}-v_i\,(1\leq i\leq
\ell-1)\}$. If $R$ is the Renner monoid of $\mso_{2\ell}$, 
the isomorphism $M(\SS_{\pm\ell}^e,\cS)\cong R$ is built from the
isomorphism $\theta:\SS_{\pm\ell}^e\rightarrow W(D_\ell)$ given 
at the end of \S\ref{section:renner:classical1}
together with 
$\zeta:E\rightarrow E(\ov{T})$ exactly as for $\msp_n$. 

\paragraph{A presentation for the Renner monoid of\/ $\mso_{2\ell}$:} 
We take the (P1)-(P4) data for $M(\SS_{\pm\ell}^e,\cS)$
listed in (2) above and apply Theorem \ref{presentations:result1000}.
We have generators 
$s_0,\ldots,s_{\ell-1},a_1,a_2$
with \emph{(Units)\/} relations $(s_is_j)^{m_{ij}}=1$
where the $m_{ij}$ are given by
$$
\begin{pspicture}(0,0)(14,2)
\rput(3.5,0){
\rput(4,1){\BoxedEPSF{fig3a.eps scaled 400}}
\rput(1.75,.45){$s_0$}\rput(1.75,1.55){$s_{1}$}\rput(2.95,.6){$s_{2}$}
\rput(4.85,.6){$s_{\ell-2}$}\rput(6.25,.6){$s_{\ell-1}$}
}
\end{pspicture}
$$
The \emph{(Idem1)\/} relations are
$a_1^2=a_1,a_2^2=a_2$, 
and the \emph{(Idem2)}
relations are 
$$
\alpha(1,\ldots,j_1)\alpha(j_1+\ve,\ldots,j_2)
=\alpha(j_1+\ve,\ldots,j_2)\alpha(1,\ldots,j_1),
$$
for all $0\leq j_1<j_2\leq \ell$, $\ve=0,1$ and with $\aa(I)$ given
by (\ref{eq:22})-(\ref{eq:23}).
The \emph{(Idem3)} relations are exactly as in the $\msp_n$ case.
The \emph{(RefIdem)\/} relations are the same as for $\msp_n$ for
$s_i\,(1\leq i\leq\ell-1)$; the relations involving $s_0$ are 
slightly different. We get:
$$
s_0\alpha(I)=\alpha(I)s_0,
\text{ and }
s_0\alpha(I)=\alpha(I)^{s_0}s_0.
$$
with the first for all $I$ with $1,2\not\in I$
and the second where at least one (or both) of $1,2$ are in $I$
It is straightforward to give an expression for $\alpha(I)^{s_0}$.
Finally, the \emph{(Iso)\/} relations are
$$
\alpha(1,I)\alpha(2,I)s_1=\alpha(1,I)\alpha(2,I),
$$
for all $I\subseteq X\setminus\{1,2\}$.
\end{example}

\subsection{An example of Solomon}
\label{section:renner:permutohedron}

For the beautiful interplay between group theory
and combinatorics that results, we look at a family of examples 
considered by Solomon in \cite{Solomon95}*{Example 5.7}.
We follow the pattern of the last section, defining first an
algebraic monoid $M$ followed by an abstract monoid of partial isomorphisms
which turns out to be isomorphic to the Renner monoid of $M$. 

Let $G_0=\sl_n$ and $V_0$ the natural
module for $G_0$. Let $\bigwedge^p V_0$ be the $p$-th exterior power
and let
$$
V=\bigotimes_{p=1}^{n-1}\bigwedge^p V_0,\text{ with }
\dim V:=m=\prod_{p=1}^{n-1}\binom{n}{p}.
$$
If $\rho:G_0\rightarrow GL(V)$ is the corresponding representation
then let $M=\ov{k^\times\rho(G_0)}\subset\m_{m}$. Let $R$ be
the Renner monoid of $M$.

Now to a monoid of partial isomorphisms. Take an $n$-dimensional
Euclidean space with basis $\{u_1,\ldots,u_n\}$ and $\SS_n$
acting by $u_i\pi=u_{i\pi}$ for 
$\pi\in\SS_n$. The $(n-1)$-simplex $\Delta^{n-1}$ is the convex hull
of the $u_i$, and as the $\SS_n$-action is linear, it restricts
to an action on $\Delta^{n-1}$. This is just the action of the group
of reflections and rotations of $\Delta^{n-1}$. In particular, if $O$
is an admissible partial orientation of $\Delta^{n-1}$ as in Example
\ref{eg:permutohedron} of \S\ref{section:idempotents:polytopes}, then
it is clear that the image $O\pi$ is also admissible. Consider the
induced $\SS_n$-action on the set $E_0$ of admissible partial
orientations and extend it to the poset $E$ of Example
\ref{eg:permutohedron} by defining $\1\pi=\1$ for all $\pi\in\SS_n$. 
This action is clearly by poset isomorphisms. 

Thus the collection of intervals $E_{\geq O}=\{O'\in E\,|\,O\leq O'\}$ forms a
system $\cS$ of subsets of $E$ for $\SS_n$ with $M(\SS_n,\cS)$ the
corresponding monoid of partial isomorphisms. 

\paragraph{The isomorphism $M(\SS_n,\cS)\cong R$.}
By Proposition \ref{section:rennermonoids:result1000} it suffices to
establish an isomorphism from $M(\SS_n,\cS)$ to $M(W,\cS_P)$ where $W$
is the Weyl group of $G$ (or $G_0$) and $P$ is the polytope described
in \S\ref{section:renner:general}. 

The Weyl group is $W(A_{n-1})$ and we take 
$\theta:\SS_n\rightarrow W(A_{n-1})$ the standard isomorphism given by 
$(i,i+1)\mapsto s_i:=s_{v_{i+1}-v_i}$. 

We now describe $P$, following \cite{Solomon95}*{Example 5.7}. 
It turns out to be convenient to describe another abstract polytope
$P'$ first, and then relate this back to the $P$ we are interested
in. 
Let $X=\{1,\ldots,n\}$ and $\tau=\{J_1,\ldots,J_{n-1}\}$ be a collection
of subsets of $X$ with $|J_i|=i$. Thus, $\tau$ contains exactly one
non-empty proper set of each possible cardinality. 
Let $\Sigma$ be the set of all such $\tau$. Given $\tau\in\Sigma$, let
$a_j$ be the number of $J_i$ in which $j$ occurs, and let
$v_\tau=(a_1,\ldots,a_n)^T\in\R^n$. 

\begin{proposition}
\label{section:reflectionmonoids:result1000}
The convex hull $P'$ of the $v_\tau$, for $\tau\in\Sigma$, is the
$(n-1)$-permutohedron having the
parameters $m_1,\ldots,m_n=0,\ldots,n-1$.
\end{proposition}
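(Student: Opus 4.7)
The plan is to identify $P'$ with the standard $(n-1)$-permutohedron by showing separately that (i) its vertices are already attained by $v_\tau$ for flag-type $\tau$, and (ii) every other $v_\tau$ satisfies the defining linear inequalities of the permutohedron.

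First I would rewrite $v_\tau = \sum_{i=1}^{n-1} e_{J_i}$, where $e_J \in \R^n$ is the characteristic vector of $J \subseteq X$. An immediate computation gives $\sum_{j=1}^n a_j = \sum_{i=1}^{n-1} |J_i| = \binom{n}{2}$, so every $v_\tau$ lies in the affine hyperplane $\{x : \sum x_j = \binom{n}{2}\}$ that carries the $(n-1)$-permutohedron with parameters $0, 1, \ldots, n-1$.

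Next I would handle the flag case. If $\tau$ is a chain $J_1 \subsetneq J_2 \subsetneq \cdots \subsetneq J_{n-1}$, setting $J_0 := \varnothing$ and $J_n := X$ produces a unique sequence $i_1, \ldots, i_n$ with $\{i_k\} = J_k \setminus J_{k-1}$, giving a permutation of $X$. The element $j = i_\ell$ lies in precisely the $J_k$ with $k \geq \ell$, so $a_{i_\ell} = n - \ell$, exhibiting $v_\tau$ as a permutation of $(0, 1, \ldots, n-1)$. As $\tau$ ranges over all flags, every such permutation is realized, so the flag-type $v_\tau$'s exhaust the vertices of the standard permutohedron.

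To finish I would show every $v_\tau$ lies in the permutohedron by verifying the defining inequalities. For any $K \subseteq X$ with $|K| = k$,
$$\sum_{j \in K} a_j = \sum_{i=1}^{n-1} |K \cap J_i| \leq \sum_{i=1}^{n-1} \min(k, i) = \binom{k}{2} + k(n-k) = kn - \binom{k+1}{2},$$
which is exactly the maximum of $\sum_{j \in K} x_j$ over permutations of $(0, 1, \ldots, n-1)$. Applying the same estimate to $K^c$ and subtracting from $\binom{n}{2}$ gives the matching lower bound $\sum_{j \in K} a_j \geq \binom{k}{2}$. By Rado's classical inequality description, these constraints together with $\sum x_j = \binom{n}{2}$ cut out the permutohedron, so $v_\tau$ lies in it; combined with step two, $P'$ equals the $(n-1)$-permutohedron. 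The main subtlety is the last step: the nested pairs $(J_i, J_j)$ that prevent $\tau$ from being a flag must not produce a point outside the polytope, and the clean way to see this is the bound $|K \cap J_i| \leq \min(k, i)$; an alternative route via an explicit uncrossing argument on such pairs is possible but less transparent.
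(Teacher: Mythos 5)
Your proposal is correct. It follows the same two-inclusion skeleton as the paper's proof (flags of subsets give exactly the permutations of $(0,1,\ldots,n-1)$, and every $v_\tau$ satisfies the Rado/Gaiha--Gupta inequalities that cut out the permutohedron), but the way you verify the inequalities is genuinely different and, to my mind, cleaner. The paper fixes the flag $\tau_0$ with $J_i=\{1,\ldots,i\}$, checks the inequality there, and then propagates it to an arbitrary $\tau$ by an exchange relation $\tau\vdash\tau'$ (replace $j\in J_i$ by $j'\notin J_i$ with $j<j'$), which requires arguing that every $\tau$ is reachable from $\tau_0$ by such moves and that each move can only decrease the relevant partial sum; it also first applies the complementation involution $\tau\mapsto\ov{\tau}$ to convert the lower bounds of the cited criterion into upper bounds. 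Your route replaces all of this with the single pointwise estimate $|K\cap J_i|\le\min(|K|,|J_i|)$, summed over $i$ to give $\sum_{j\in K}a_j\le kn-\binom{k+1}{2}$, with the lower bound following by complementation in $K$; this avoids both the reachability claim and the induction, at the cost of nothing. The exchange argument the paper uses does carry a small side benefit (it makes the $\SS_n$-orbit structure of the $v_\tau$ explicit, which is in the spirit of the surrounding section), but as a proof of the stated proposition your direct bound is the more transparent one.
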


\begin{proof}
We start with some elementary observations:
\begin{description}
\item[(i).] if $\pi\in\SS_n$ then
$\tau\pi:=\{J_1\pi,\ldots,J_{n-1}\pi\}\in\Sigma$ with $v_{\tau\pi}=(a_{1\pi^{-1}},\ldots,a_{n\pi^{-1}})$;
\item[(ii).] we have $\ov{\tau}:=\{X\setminus J_1,\ldots,X\setminus  J_{n-1}\}\in\Sigma$ with 
$v_{\ov{\tau}}=(\ov{a}_1,\ldots,\ov{a}_n)$ for $\ov{a}_j=n-a_j-1$;
\item[(iii).] let $\tau\in\Sigma$ and suppose that for some $i$ we
  have $j<j'$ with $j\in  J_i$ and 
$j'\not\in J_i$. Let $\tau'$ be such that $J_i'=\{j'\}\cup(J_i\setminus\{j\})$ and
all other $J'_j$ the same as in $\tau$. Then $\tau'\in\Sigma$. We write
$\tau\vdash\tau'$ to denote this move.
\end{description}
If $\tau_0$ is such that $J_i=\{1,\ldots,i\}$ then $v_{\ov{\tau}_0}=(0,1,\ldots,n-1)$, so that $\Sigma$
contains all the permutations of this vector, and 
the $(n-1)$-permutohedron is contained in $P'$.
The reverse inclusion is established by showing that the $v_\tau$ are contained in the 
$(n-1)$-permutohedron. If $v_\tau=(a_1,\ldots,a_n)^T$ then by 
\cite{Gaiha:Gupta77}*{Theorem 2} it suffices to show 
for all $Y\subseteq X$ with $|Y|=k$ that
$\sum_{i\in Y}a_i\geq 0+1+\cdots+(k-1)$.
Equivalently, by applying the involution $\tau\mapsto\ov{\tau}$, we show that 
${\textstyle\sum_{i\in Y}a_i\leq (n-1)+(n-2)+\cdots+(n-k)}$.
By permuting and relabeling we can assume that $Y=\{1,\ldots,k\}$; 
thus it remains to show for all $v_\tau=(a_1,\ldots,a_n)$ and all $k$ that
\begin{equation}
  \label{eq:16}
  a_1+\cdots+a_k\leq (n-1)+\cdots+(n-k). 
\end{equation}
Consider first the $\tau_0$ given above
with $v_{\tau_0}=(n-1,\ldots,1,0)$. Then this clearly satisfies (\ref{eq:16}). 
If $\tau\vdash\tau'$ and $\tau$ satisfies (\ref{eq:16}) then so does $\tau'$.
For $\tau\in\Sigma$ compare the subsets $J_i=\{1,\ldots,i\}$ of $\tau_0$ and
$J_i'$ of $\tau$. Then there is a 1-1 correspondence between the $1\leq j\leq i$
that are not in $J'_i$ and the $i+1\leq j\leq n$ that are in $J'_i$. Working
through the $i$, we get a sequence of moves
$\tau_0\vdash\cdots\vdash\tau$, and hence that 
$\tau$ satisfies (\ref{eq:16}) as required. 
\qed
\end{proof}

The polytope $P'$ is not quite the $P$ described in \S\ref{section:renner:general}.
To get it back, we need to compute the columns of the matrix $A$ whose rows
$\mathbf{a}_i=(a_{i1},\ldots,a_{im})$
are given by (\ref{eq:24}).
Recall the simple roots $v_{p+1}-v_p$ from Example 
\ref{eg:general} and let
$$
(v_{p+1}-v_p)^\vee(t)=\diag(1,\ldots,t,t^{-1},\ldots,1)\text{ for }1\leq p\leq n-1
$$
be the corresponding coroots with the $t$ in the $p$-th position. If 
$v_\tau=(a_1,\ldots,a_n)^T$ arises from $\tau=\{J_1,\ldots,J_{n-1}\}$
with $J_1=\{i\},J_2=\{j,k\},\ldots,$
$J_{n-1}=\{1,\ldots,\widehat{q},\ldots,n\}$,
then $V$ has basis the $v$ of the form
$$
  v=v_i\otimes(v_j\wedge v_k)\otimes\cdots
\otimes(v_1\wedge\cdots\wedge\widehat{v}_q\wedge \cdots\wedge v_n),
$$
as $\tau$ ranges over $\Sigma$ and where $\{v_1,\ldots,v_n\}$ is a basis for $V_0$. Then
$$
\rho(v_{p+1}-v_p)^\vee(t)v=t^{a_p-a_{p+1}}v,
$$
and so the columns of $A$ are 
the $(a_1-a_2,\ldots,a_{n-1}-a_n)^T$. In particular the map
$
(x_1,\ldots,x_n)^T\mapsto (x_1-x_2,\ldots,x_{n-1}-x_n)^T
$
sends the permutohedron $P'$ of Proposition
\ref{section:reflectionmonoids:result1000} to the polytope $P$
described in \S\ref{section:renner:general}.

Let $O$ be an admissible partial orientation of $\Delta^{n-1}$ and
$O\mapsto f'_O$ be the isomorphism $E\rightarrow\FF(P')$ of
Proposition \ref{presentations:result400}, with
$m_1,\ldots,m_n=0,\ldots,n-1$. The map $\R^n\rightarrow \R^{n-1}$
given by $(x_1,\ldots,x_n)$ $\mapsto (x_1-x_2,\ldots,x_{n-1}-x_n)$
induces an isomorphism $\FF(P')\rightarrow\FF(P)$ which we write as
$f'_O\mapsto f_O$. Finally, let $\zeta$ send the partial identity on
the interval $E_{\geq O}$ of $E$ to the partial identity on the
interval $E_{\geq f_O}$ of $\FF(P)$. 

That $\zeta$ is equivariant and $\theta$ preserves idempotent
stabilizers (which actually turn out to be trivial)
we leave to the reader, although we supply the following
hint: the vertices of $P$ can be labeled (in a one to one fashion) by
the $g\in W(A_{n-1})$ and the edges can be labeled by the $s_i$ so
that there is an $s_i$-labeled edge connecting $g$ to $g'$ if and only
if $g'=gs_i$ (in the
language of \cite{Everitt}*{Chapter 3}, the $1$-skeleton of $P$ is the
universal cover, or Cayley graph, of the presentation $2$-complex of
$W$ with respect to its presentation as Coxeter group). The action of
$W(A_{n-1})$ on the vertices of $P$ can then be described as follows:
if $g=s_{i_1}\ldots s_{i_k}\in W(A_{n-1})$ and $v$ is the vertex of
$P$ labeled by the identity, then let $v'$ be the terminal vertex of a
path starting at $v$ and with edges labeled
$s_{i_1},\ldots,s_{i_k}$. For any vertex $u$, let $s_{j_1}\ldots s_{j_\ell}$ 
be the label of a path from $v$ to $u$, and let $u'$ be
the terminal vertex of a path starting at $v'$ and with label 
$s_{j_1}\ldots s_{j_\ell}$. Then $g$ maps $u$ to $u'$ (and in particular
$v$ to $v'$). In the language of \cite{Everitt}*{Chapter 4}, the $W(A_{n-1})$-action
is as the Galois group of the covering of $2$-complexes. 

Define $\varphi:M(\SS_n,\cS)\rightarrow M(W(A_{n-1}),\cS_P)$ as in 
Proposition \ref{section:reflectionmonoids:result200}.

\paragraph{Presentation data for the monoid $M(\SS_n,\cS)$.}
The atoms are the partial orientations $a_J$ from \S\ref{section:idempotents:polytopes}
for $J$ a non-empty proper subset of $X=\{1,\ldots,n\}$. The
$\SS_n$-action on the partial orientations induces an action on the
atoms given by $a_J\cdot\pi=a_{J\pi}$ for $\pi\in\SS_n$. Thus, we just
have the action of $\SS_n$ on the subsets of $X$, so for the
representatives $O_k$ we can appeal to
\S\ref{section:renner:interlude}.

The set $\mathit{Char}_1$ corresponds to the $n_0$ with $0\leq n_0\leq
n$,
and we take $O_1=\{a_1,\ldots,a_{n-1}\}$ with $a_i:=a_{\{1\ldots,i\}}$. The
absence of an $a_0$ and $a_n$ is because we have restricted to the
action on the non-empty proper subsets of $X$.
The
set $\mathit{Char}_2$ corresponds to the $n_0,n_1,n_2$ such that $0\leq
n_0\leq n_1,n_2$ with $n_1+n_2-n_0\leq n$ and $0<n_i<n$. From
\S\ref{section:renner:interlude} we get a tuple $(J,K)$ where
$$
J=\{1,\ldots,n_0\}\cup\{n_0+1,\ldots,n_1\}
\text{ and }
K=\{1,\ldots,n_0\}\cup\{n_1+1,\ldots,n_1+n_2-n_0\}
$$
are representatives for the corresponding orbit. Thus we take
$O_2$ to be the pairs $\{a_J,a_K\}$.
The set $\mathit{Char}_3$ corresponds to the $n_0,\ldots,n_3,n_{ij}$
satisfying the conditions on the far right of Figure
\ref{fig;characteristic.maps}, together with $0<n_{ij}<n$. We get a
corresponding tuple $(J_1,J_2,J_3)$ using the scheme $(\dag)$ at the
end of \S\ref{section:renner:interlude}, and we take $O_3$ to be the 
the set of $\{a_{J_1},a_{J_2},a_{J_3}\}$.

For $J$ a non-empty proper subset of $X$, fix an element
$w_J\in\SS_n$ with $Jw_J=\{1,\ldots,|J|\}$ and let 
\begin{equation}
  \label{eq:10}
  \aa_J:=\omega_J a_k \omega_J^{-1}.
\end{equation}
It turns out that for an arbitrary $O\in E$ we do not require
an expression in the atoms for $O$, except in the case $O=\1$, the
formally adjoined unique maximal element. We take $\1:=\bigvee a_J$,
the join over all the atoms, i.e.: over all non-empty proper subsets
$J$ of $X$.

Finally we have the set $\mathit{Iso}$. The set $T$ consists of the
transpositions $(i,j)\in\SS_n$ and for $t\in T$, $H_t=\{O\in
E_0\,|\,Ot=O\}\cup\{\1\}$ with $\AA=\{H_t\,|\,t\in T\}$. There is a single $\SS_n$-orbit on $\AA$
with representative $H_1:=H_{s_1}$ where $s_i:=(i,i+1)$. 
The $O$ are the partial admissible orientations of $\Delta^{n-1}$, and
one such is fixed by $s_1$ exactly when the edge joining $v_1$ and
$v_2$ is not in $O$, and for all $i>2$, the edge joining $v_1$ and
$v_i$ lies in $O$ if and only if the edge joining $v_2$ and $v_i$ lies in $O$. 
We want $O$ minimal with the property that $H_1\supseteq E_{\geq O}$.
But if $O<\1$ then the interval $E_{\geq O}$ contains an admissible
partial orientation in which all the edges of $\Delta^{n-1}$ are
oriented (i.e.: a total order). Thus, $E_{\geq O}$ contains an $O'$ in
which the edge joining $v_1$ and $v_2$ is oriented, and so
$O's_1\not=O'$. The result is that $H_1\not\supseteq E_{\geq O}$. 

The only element of $E$ then that is minimal with $H_1\supseteq E_{\geq O}$ is
$\1$, and $\mathit{Iso}$ consists of the single pair $(\1,s_1)$. 

\begin{example}[the presentation for the Renner monoid of $M$]
We have generators 
$s_1,\ldots,s_{n-1}$ and $a_1,\ldots,a_{n-1}$
with \emph{(Units)\/} relations $(s_is_j)^{m_{ij}}=1$
where the $m_{ij}$ are given by the symbol
$$
\begin{pspicture}(0,0)(14,1)
\rput(3,-0.5){
\rput(4,1){\BoxedEPSF{fig2.eps scaled 400}}
\rput(1.65,.6){$s_1$}\rput(3,.6){$s_2$}
\rput(4.9,.6){$s_{n-2}$}\rput(6.35,.6){$s_{n-1}$}
}
\end{pspicture}
$$
The \emph{(Idem1)\/} relations are $a_i^2=a_i\,(1\leq i\leq n-1)$ and the \emph{(Idem2)\/}
relations are $\aa_J\aa_K=\aa_K\aa_L$ where 
the $\aa$'s are given by (\ref{eq:10}),
$J=\{1,\ldots,n_0\}\cup\{n_0+1,\ldots,n_1\}$, 
$K=\{1,\ldots,n_0\}\cup\{n_1+1,\ldots,n_1+n_2-n_0\}$ and 
$0\leq n_0\leq n_1,n_2$ are such that $n_1+n_2-n_0\leq n$
and $0<n_i<n$.

The presentation for the permutohedron from
\S\ref{section:idempotents:polytopes}
gives \emph{(Idem3)\/} relations
$\aa_{J_1}\aa_{J_2}=\aa_{J_1}\aa_{J_2}\aa_{J_3}$ for all 
$\{a_{J_1},a_{J_2},a_{J_3}\}\in O_3$ such that 
$J_1\not=J_1\cap J_2\not=J_2$; that is,
$n_1-n_0$, $n_{13}-n_1-n_3+n_0$ are not both zero, and 
$n_2-n_0$, $n_{23}-n_2-n_3+n_0$ are not both zero.

The \emph{(RefIdem)\/} are $s_i\aa_J=\aa_{Js_i}s_i$ for
$1\leq i\leq n-1$ and $J$ a non-empty proper subset of $X$. Finally
the \emph{(Iso)\/} are the single relation
$$
\prod\aa_J\cdot s_1=\prod\aa_J,
$$
where the product is over all proper non-empty subsets $J$ of $X$.
\end{example}



\section*{References}

\begin{biblist}


\bib{Borel91}{book}{
  author={Borel, Armand},
  title={Linear algebraic groups},
  series={Graduate Texts in Mathematics},
  volume={126},
  edition={2},
  publisher={Springer-Verlag},
  place={New York},
  date={1991},
  pages={xii+288},
  isbn={0-387-97370-2},
  review={MR{1102012 (92d:20001)}},
}

\bib{Bourbaki02}{book}{
   author={Bourbaki, Nicolas},
    title={Lie groups and Lie algebras. Chapters 4--6},
   series={Elements of Mathematics (Berlin)},
     note={Translated from the 1968 French original by Andrew Pressley},
publisher={Springer-Verlag},
    place={Berlin},
     date={2002},
    pages={xii+300},
     isbn={3-540-42650-7},
   review={MR1890629 (2003a:17001)},
}





\bib{Easdown05}{article}{
   author={Easdown, David},
   author={East, James},
   author={FitzGerald, D. G.},
   title={Presentations of factorizable inverse monoids},
   journal={Acta Sci. Math. (Szeged)},
   volume={71},
   date={2005},
   number={3-4},
   pages={509--520},
   issn={0001-6969},
   review={MR{2206594}},
}

\bib{Easdown04}{article}{
    author={Easdown, D.},
    author={Lavers, T. G.},
     title={The inverse braid monoid},
   journal={Adv. Math.},
    volume={186},
      date={2004},
    number={2},
     pages={438\ndash 455},
      issn={0001-8708},
    review={MR2073914 (2005d:20062)},
}

\bib{Everitt}{book}{
    author={Everitt, Brent},
    title={The Combinatorial Topology of Groups},
  series={Universitext},
  publisher={Springer-Verlag (to appear)},
  place={New York},
note={{\tt arXiv:0902.3912}},
}

\bib{Everitt-Fountain10}{article}{
    author={Everitt, Brent},
    author={Fountain, John},
     title={Partial symmetry, reflection monoids and Coxeter groups},
   journal={Adv. Math.},
   volume={223},
   date={2010},
   number={5},
   pages={1782--1814},
   issn={0001-8708},
   doi={10.1016/j.aim.2009.10.008},
}

\bib{Fitzgerald03}{article}{
   author={FitzGerald, D. G.},
   title={A presentation for the monoid of uniform block permutations},
   journal={Bull. Austral. Math. Soc.},
   volume={68},
   date={2003},
   number={2},
   pages={317--324},
   issn={0004-9727},
   review={\MR{2016306 (2004h:20078)}},
}

\bib{Gaiha:Gupta77}{article}{
   author={Gaiha, P.},
   author={Gupta, S. K.},
   title={Adjacent vertices on a permutohedron},
   journal={SIAM J. Appl. Math.},
   volume={32},
   date={1977},
   number={2},
   pages={323--327},
   issn={0036-1399},
   review={\MR{0427102 (55 \#138)}},
}

\bib{Godelle10a}{article}{
   author={Godelle, Eddy},
   title={Generic Hecke Algebra for Renner Monoids},
note={{\tt arXiv:1002:1236}},
}

\bib{Godelle10}{article}{
   author={Godelle, Eddy},
   title={A note on Renner monoids},
note={{\tt arXiv:0904.0926}},
}

\bib{Grunbaum03}{book}{
  author={Gr{\"u}nbaum, Branko},
  title={Convex polytopes},
  series={Graduate Texts in Mathematics},
  volume={221},
  edition={2},
  note={Prepared and with a preface by Volker Kaibel, Victor Klee and
  G\"unter M.\ Ziegler},
  publisher={Springer-Verlag},
  place={New York},
  date={2003},
  pages={xvi+468},
  isbn={0-387-00424-6},
  isbn={0-387-40409-0},
  review={MR{1976856 (2004b:52001)}},
}

\bib{Howie95}{book}{
   author={Howie, John M.},
   title={Fundamentals of semigroup theory},
   series={London Mathematical Society Monographs. New Series},
   volume={12},
   note={Oxford Science Publications},
   publisher={The Clarendon Press Oxford University Press},
   place={New York},
   date={1995},
   pages={x+351},
   isbn={0-19-851194-9},
   review={MR{1455373 (98e:20059)}},
}

\bib{Humphreys90}{book}{
    author={Humphreys, James E.},
     title={Reflection groups and Coxeter groups},
    series={Cambridge Studies in Advanced Mathematics},
    volume={29},
 publisher={Cambridge University Press},
     place={Cambridge},
      date={1990},
     pages={xii+204},
      isbn={0-521-37510-X},
    review={MR1066460 (92h:20002)},
}

\bib{Humphreys75}{book}{
  author={Humphreys, James E.},
  title={Linear algebraic groups},
  note={Graduate Texts in Mathematics, No. 21},
  publisher={Springer-Verlag},
  place={New York},
  date={1975},
  pages={xiv+247},
  review={MR{0396773 (53 \#633)}},
}

\bib{Kane01}{book}{
   author={Kane, Richard},
    title={Reflection groups and invariant theory},
   series={CMS Books in Mathematics/Ouvrages de Math\'ematiques de la SMC,
           5},
publisher={Springer-Verlag},
    place={New York},
     date={2001},
    pages={x+379},
     isbn={0-387-98979-X},
   review={MR1838580 (2002c:20061)},
}

\bib{Li03}{article}{
   author={Li, Zhenheng},
   title={Idempotent lattices, Renner monoids and cross section lattices of
   the special orthogonal algebraic monoids},
   journal={J. Algebra},
   volume={270},
   date={2003},
   number={2},
   pages={445--458},
   issn={0021-8693},
   review={\MR{2019626 (2005b:20125)}},
}

\bib{Li03a}{article}{
   author={Li, Zhenheng},
   title={The cross section lattices and Renner monoids of the odd special
   orthogonal algebraic monoids},
   journal={Semigroup Forum},
   volume={66},
   date={2003},
   number={2},
   pages={273--287},
   issn={0037-1912},
   review={\MR{1953503 (2003k:20105)}},
   doi={10.1007/s002330010151},
}

\bib{Li_Renner03}{article}{
   author={Li, Zhenheng},
   author={Renner, Lex E.},
   title={The Renner monoids and cell decompositions of the symplectic
   algebraic monoids},
   journal={Internat. J. Algebra Comput.},
   volume={13},
   date={2003},
   number={2},
   pages={111--132},
   issn={0218-1967},
   review={\MR{1974062 (2004c:20113)}},
   doi={10.1142/S0218196703001304},
}



\bib{Nakajima79}{article}{
   author={Nakajima, Haruhisa},
   title={Invariants of finite groups generated by pseudoreflections in
   positive characteristic},
   journal={Tsukuba J. Math.},
   volume={3},
   date={1979},
   number={1},
   pages={109--122},
   review={\MR{543025 (82i:20058)}},
}

\bib{Orlik80}{article}{
  author={Orlik, Peter},
  author={Solomon, Louis},
  title={Combinatorics and topology of complements of hyperplanes},
  journal={Invent. Math.},
  volume={56},
  date={1980},
  number={2},
  pages={167--189},
  issn={0020-9910},
  review={MR{558866 (81e:32015)}},
}

\bib{Orlik92}{book}{
   author={Orlik, Peter},
   author={Terao, Hiroaki},
   title={Arrangements of hyperplanes},
   series={Grundlehren der Mathematischen Wissenschaften [Fundamental
   Principles of Mathematical Sciences]},
   volume={300},
   publisher={Springer-Verlag},
   place={Berlin},
   date={1992},
   pages={xviii+325},
   isbn={3-540-55259-6},
   review={MR{1217488 (94e:52014)}},
}

\bib{Popova61}{article}{
    author={Popova, L. M.},
     title={Defining relations is some semigroups of partial transformations of a
            finite set},
  language={Russian},
   journal={Leningrad. Gos. Ped. Inst. Uvcen. Zap.},
    volume={218},
      date={1961},
     pages={191\ndash 212},
}


\bib{Putcha88}{book}{
   author={Putcha, Mohan S.},
    title={Linear algebraic monoids},
   series={London Mathematical Society Lecture Note Series},
   volume={133},
publisher={Cambridge University Press},
    place={Cambridge},
     date={1988},
    pages={x+171},
     isbn={0-521-35809-4},
   review={MR964690 (90a:20003)},
}

\bib{Renner05}{book}{
   author={Renner, Lex E.},
   title={Linear algebraic monoids},
   series={Encyclopaedia of Mathematical Sciences},
   volume={134},
   note={;
   Invariant Theory and Algebraic Transformation Groups, V},
   publisher={Springer-Verlag},
   place={Berlin},
   date={2005},
   pages={xii+246},
   isbn={3-540-24241-4},
   review={MR{2134980 (2006a:20002)}},
}

\bib{Solomon02}{article}{
   author={Solomon, Louis},
   title={Representations of the rook monoid},
   journal={J. Algebra},
   volume={256},
   date={2002},
   number={2},
   pages={309--342},
   issn={0021-8693},
   review={\MR{1939108 (2003m:20091)}},
   doi={10.1016/S0021-8693(02)00004-2},
}

\bib{Solomon95}{article}{
   author={Solomon, Louis},
   title={An introduction to reductive monoids},
   conference={
      title={Semigroups, formal languages and groups},
      address={York},
      date={1993},
   },
   book={
      series={NATO Adv. Sci. Inst. Ser. C Math. Phys. Sci.},
      volume={466},
      publisher={Kluwer Acad. Publ.},
      place={Dordrecht},
   },
   date={1995},
   pages={295--352},
   review={MR{1630625 (99h:20099)}},
}

\bib{Springer98}{book}{
  author={Springer, T. A.},
  title={Linear algebraic groups},
  series={Progress in Mathematics},
  volume={9},
  edition={2},
  publisher={Birkh\"auser Boston Inc.},
  place={Boston, MA},
  date={1998},
  pages={xiv+334},
  isbn={0-8176-4021-5},
  review={MR{1642713 (99h:20075)}},
}

\bib{Stanley97}{book}{
   author={Stanley, Richard P.},
    title={Enumerative combinatorics. Vol. 1},
   series={Cambridge Studies in Advanced Mathematics},
   volume={49},
     note={With a foreword by Gian-Carlo Rota;
           Corrected reprint of the 1986 original},
publisher={Cambridge University Press},
    place={Cambridge},
     date={1997},
    pages={xii+325},
     isbn={0-521-55309-1},
     isbn={0-521-66351-2},
   review={MR1442260 (98a:05001)},
}

\bib{Steinberg64}{article}{
   author={Steinberg, Robert},
   title={Differential equations invariant under finite reflection groups},
   journal={Trans. Amer. Math. Soc.},
   volume={112},
   date={1964},
   pages={392--400},
   issn={0002-9947},
   review={\MR{0167535 (29 \#4807)}},
}



\bib{Ziegler95}{book}{
   author={Ziegler, G{\"u}nter M.},
   title={Lectures on polytopes},
   series={Graduate Texts in Mathematics},
   volume={152},
   publisher={Springer-Verlag},
   place={New York},
   date={1995},
   pages={x+370},
   isbn={0-387-94365-X},
   review={\MR{1311028 (96a:52011)}},
}

\end{biblist}

\end{document}